\documentclass[12pt,reqno]{amsart}
\usepackage{amssymb}


\usepackage[curve,matrix,arrow]{xy}

\textwidth 15cm
\oddsidemargin 0.7cm
\evensidemargin 0.7cm
\textheight 40
\baselineskip

\theoremstyle{plain} 
\numberwithin{equation}{section}
\newtheorem{thm}[equation]{Theorem}
\newtheorem{lemma}[equation]{Lemma}

\newtheorem{prop}[equation]{Proposition}

\newtheorem{defi}[equation]{Definition}

\theoremstyle{definition}
\newtheorem{rem}[equation]{Remark}

\newtheorem{exm}[equation]{Example}

\theoremstyle{remark}


\def\CC{{\mathcal{C}}}

\def\CE{{\mathcal{E}}}
\def\CF{{\mathcal{F}}}
\def\CH{{\mathcal{H}}}

\def\CL{{\mathcal{L}}}
\def\CM{{\mathcal{M}}}
\def\CN{{\mathcal{N}}}

\def\CP{{\mathcal{P}}}

\def\CS{{\mathcal{S}}}

\def\CU{{\mathcal{U}}}
\def\CV{{\mathcal{V}}}

\def\CV{{\mathcal{V}}}

\def\FA{{\mathfrak{A}}}

\def\bZ{{\mathbb Z}}


\def\Id{\operatorname{Id}\nolimits}
\def\Proj{\operatorname{Proj}\nolimits}

\def\res{\operatorname{res}\nolimits}

\def\Rad{\operatorname{Rad}\nolimits}

\def\modg{\operatorname{{\bf mod}(\text{$kG$})}\nolimits}

\def\Modg{\operatorname{{\bf Mod}(\text{$kG$})}\nolimits}

\def\stmodg{\operatorname{{\bf stmod}(\text{$kG$})}\nolimits}
\def\Stmodg{\operatorname{{\bf StMod}(\text{$kG$})}\nolimits}
\def\Stmod{\operatorname{{\bf StMod}}\nolimits}

\def\sfg{\operatorname{{\mathcal S}{\mathcal F}(\text{$kG$})}\nolimits}


\def\HHH{\operatorname{H}\nolimits}
\def\Hom{\operatorname{Hom}\nolimits}
\def\PHom{\operatorname{PHom}\nolimits}

\def\End{\operatorname{End}\nolimits}
\def\Endul{\operatorname{\underline{End}}\nolimits}
\def\HH#1#2#3{\HHH^{#1}(#2,#3)}

\def\Homul{\operatorname{\underline{Hom}}\nolimits}

\def\Ext{\operatorname{Ext}\nolimits}

\def\hgs{\HH{*}{G}{k}}


\def\Ann{\operatorname{Ann}\nolimits}

\title[Idempotent Modules and local supports]
{Idempotent modules, locus of compactness and local supports}
\author[Jon F. Carlson]{Jon F. Carlson}
\address{Department of Mathematics, University of Georgia, 
Athens, GA 30602, USA}
\email{jfc@math.uga.edu}
\thanks{Research partially supported by 
Simons Foundation grant 054813-01}

\date\today
\subjclass{20C20 (primary), 20J06, 18G80}
\keywords{Finite group schemes, representations, idempotent modules, 
thick subcategories, support varieties}

\dedicatory{To the memory of Brian Parshall and Georgia Benkart, good 
friends and wonderful colleagues}

\begin{document}

\begin{abstract} 
Let $kG$ be the group algebra of a finite group scheme defined over
a field $k$ of characteristic $p>0$. Associated to any closed subset $V$
of the projectivized prime ideal spectrum $\Proj \hgs$ is a thick 
tensor ideal subcategory of the stable category of finitely generated
$kG$-module, whose closure under arbitrary direct sums is a 
localizing tensor ideal in the stable category of all $kG$-modules. 
The colocalizing functor from the big stable category to this 
localizing subcategory is given by tensoring with an idempotent module
$\CE$. A property of the idempotent module is that its restriction 
along any flat map $\alpha:k[t]/(t^p) \to kG$ is a compact object. For
a $kG$-module $M$, we define its locus of compactness in terms 
of such restrictions. With some added hypothesis, in the case that 
$V$ is a closed point, for a $kG$-module $M$, we show that in the 
stable category $\Hom(\CE, M)$ is finitely generated over the endomorphism
ring of $\CE$, provided the restriction along an associated flat map is 
a compact object. This leads to a notion of local supports. We prove some
of its properties, give a realization theorem and display the images of 
the $L_\zeta$ modules under the colocalizing functor.  
\end{abstract}

\maketitle


\section{Introduction and notation}
Suppose that $k$ is a field of characteristic $p > 0$ and that $G$ is a
finite group scheme defined over $k$. Let $kG$ denote its group algebra, 
which is a finite dimensional cocommutative Hopf algebra. 
Examples of such algebras include the group algebras of finite groups,
restricted enveloping algebras of restricted $p$-Lie algebras and 
infinitesimal subgroups of algebraic groups defined over $k$.
For any such algebra the stable categories $\stmodg$ of finitely generated
$kG$-modules and $\Stmodg$ of all $kG$-modules are tensor triangulated
categories. There is a notion of support varieties, and the thick tensor 
ideals in $\stmodg$ have been classified using this construction and 
the spectrum $V_G(k) = \Proj \HHH^*(G,k)$ of the cohomology ring
\cite{BCR, FP}.

Associated to any thick tensor ideal in $\stmodg$ 
is a triangle consisting of the trivial module and a 
pair of  idempotent modules $\CE$ and $\CF$. Tensoring with these
modules induce colocalizing and localizing functors on the stable 
category. Except in trivial cases, these module are infinitely 
generated. None the less, they have the unusual property that 
their restrictions along any flat map (called a $\pi$-point) 
$A = k[t]/(t^p) \to kG$ is
a compact object in $\Stmod(A)$. So it would seem that the property 
of having compact restrictions is related to the structure of the 
category $\Stmodg$.  In this paper we begin  an investigation of this issue.  

We introduce a locus of compactness for a $kG$-module $M$. 
It is the collection of all 
points in $V_G(k)$ such that the restriction 
of $M$ along any associated $\pi$-point 
is compact. The locus has some properties similar to support varieties,
and in particular, it identifies some thick subcategories of 
$\Stmodg$. In the case that $kG$ is the restricted enveloping algebra
of a Lie algebra, the properties are particularly nice, and 
the subcategories closed under tensor products. In other cases 
there is some dependence on the Hopf structure on $kG$. Given a module $M$, 
the inclusion of a point of $V_G(k)$ in the locus of $M$ depends on the 
image of $M$ in the colocalized category generated by the idempotent
$\CE$ module. 

The idempotent $\CE$ module acts by tensor product as the identity on the 
colocalized subcategory that it generates. As a consequence, the group of 
homomorphisms between two modules in the subcategory is module over the 
endomorphism ring of the idempotent module. Using earlier work \cite{Cendo1},
we know precisely the structure of the endomorphism ring of the $\CE$ module 
in the case that $kG$ is the group algebra of an elementary abelian $p$-group
and the variety of the $\CE$ module is a single point $V$ in $V_G(k)$. While it
is true in this case that the endomorphism ring is 
infinitely generated, modules 
in the subcategory can still be distinguished by the annihilator of their 
endomorphisms and homomorphisms. 
We show that, given any noninveritble element $\zeta$ of the 
stable endomorphism ring $\Endul_{kG}(\CE)$,
there is a $kG$-module $M_\zeta$ such that the annihilator of 
$\Homul_{kG}(\CE, M)$ in $\Endul_{kG}(\CE)$
is close to being the ideal generated by $\zeta$. 
That is, the radicals of the ideals coincide. 
In addition, if the point $V$ is in the locus of 
compactness of a module $M$ in the subcategory, then the homomorphism 
group in the stable category $\Homul_{kG}(\CE, M)$ is finitely generated 
over the endomorphism ring $\Endul_{kG}(\CE)$. 

After an early version of this paper was poseted, it came to light that
the group consisting of Dave Benson, Srikanth Iyengar, Henning Krause and
Julia Pevtsova have results that overlap significantly with those in this 
paper. The BIKP collaboration approaches the subject from an abstract 
categorical perspective that contrasts sharply with the more elementary 
hands-on development presented here. In several cases, they prove stronger 
theorems, but without as much of 
the detailed information on the modules and maps. 
I am grateful to the group for letting me see their 
unfinished manuscript \cite{BIKP2} and
letting me borrow at least one of their ideas. I want to also thank
the Hausdorff Research Institute for Mathematics at the University of 
Bonn for the support and stimulating program during which some of 
work on this paper was completed. 

For references on group representation and cohomology see the 
texts \cite{Bbook} or \cite{CTVZ}. For triangulated categories,
see \cite{Neem} or the early sections of \cite{BF}.


\section{Background}
Throughout the paper $kG$ is the group algebra of a finite group 
scheme $G$ that is defined over the field $k$ of characteristic $p > 0$.
Let $\modg$ denote
the category of finitely generated $kG$-modules and 
$kG$-module homomorphisms, and let $\Modg$ be the category 
of all $kG$-modules and homomorphisms.  The stable 
category $\stmodg$ has the same objects as $\modg$, but in the stable 
category the set of morphisms from object $M$ to object $N$
is given by 
\[
\Homul_{kG}(M,N) = \frac{\Hom_{kG}(M,N)}{\PHom_{kG}(M,N)}
\]
where $\PHom_{kG}(M,N)$ is the set of all homomorphisms that factor 
through a projective module. The category $\Stmodg$ of all $kG$-modules
is constructed the same way, by factoring out any map that factors through
a projective module. Note that $\stmodg$ is
the subcategory of compact objects in $\Stmodg$.

For a $kG$-module $M$, the modules $\Omega(M)$ and $\Omega^{-1}(M)$ are 
defined to be the kernel of a projective 
cover $P \twoheadrightarrow M$ and the 
cokernel of an injective hull $M \hookrightarrow I$, respectively. Recall that 
$kG$ is a self-injective rings, so that the injective module $I$ is 
projective and the projective module $P$ is injective. Consequently, 
$\Omega(\Omega^{-1}(M)t) \cong \Omega^{-1}(\Omega(M)$ is the largest 
direct summand of $M$ that has no projective summands.

The stable categories $\stmodg$ and $\Stmodg$ are tensor triangulated
categories. The tensor product of modules is over the base field $k$ with
the action of $kG$ defined using the coalgebra structure on $kG$. The 
triangles correspond roughly to short exact sequences in the module 
category. That is, a triangle has the form 
\[
\xymatrix{
{} \ar[r] & X \ar[r]^\alpha & Y \ar[r]^\beta & Z \ar[r]^{\gamma \quad} & 
\Omega^{-1}(X) \ar[r] &  {}
} 
\]
where for some projective module $P$ and maps $\alpha^\prime$, $\beta^\prime$
representing the classes $\alpha$ and $\beta$, there is an exact sequence 
\[
\xymatrix{
0 \ar[r] & X \ar[r]^{\alpha^\prime \quad}  & Y \oplus P 
\ar[r]^{\quad \beta^\prime} & 
Z \ar[r] & 0 \quad .
}
\]
The operator $\Omega^{-1}$ is the 
translation or shift functor on the stable categories.  

A $\pi$-point \cite{FP} is a flat map 
$\alpha: K[t]/(t^p) \to KG_K$, where $K$ is 
some extension of $k$, and the map factors by flat maps through 
the group algebra $KE$ of some unipotent abelian subgroup 
scheme $E$ of $G_K$. Two $\pi$-points,
$\alpha_K: K[t]/(t^p) \to KG_K$ and $\beta_L: L[t]/(t^p) \to LG_L$, 
are equivalent if for any finitely generated $kG$-module $M$ the restriction
of $K \otimes_k M$ to $K[t]/(t^p)$, denoted 
$\alpha_K^*(K \otimes M)$, and $\beta_L^*(L \otimes M)$ are either both 
projective or both not projective. 

In the case that $G$ is a finite group, a unipotent subgroup scheme would be 
a subalgebra $kE$ where $E = \langle g_1, \dots, g_r \rangle$ is an 
elementary abelian subgroup of order $p^r$ for some $r$. Then a $\pi$-point
that is defined over $k$ would have the form 
$\alpha:k[t]/(t^p) \to kE \subseteq kG$ given by 
$\alpha(1) = \sum_{i=1}^r \alpha_i(g_i-1) + w$ where $w$ is some element in 
$\Rad^2(kE)$ and for some $i$, $\alpha_i \neq 0$.  
The equivalence class of $\alpha$ depends only on the 
element $[\alpha_1| \dots| \alpha_r] \in \CP_k^{r-1}$. Let $V^r_G(k)$
denote the collection of all equivalence classes of $\pi$-points. 

The cohomology ring $\hgs$ is a finitely generated graded-commutative 
$k$-algebra. This was proved by Evens, Golod and Venkov for finite 
groups and by Friedlander and Suslin \cite{FSus} 
for general finite group schemes. 
The projectivized prime ideal 
spectrum $V_G(k) = \Proj \hgs$ is a 
finite dimensional projective variety. 
For $M$ a finitely generated $kG$-module, let $V_G(M) \subset V_G(k)$ 
be the collection of all homogeneous prime ideals that contain the 
annihilator of $\Ext^*_{kG}(M,M)$.

Given a closed subvariety $V \subseteq V_G(k)$, the collection $\CM_V$ of all
finitely generated $kG$-modules $M$ with $V_G(M) \subseteq V$ is a 
thick tensor ideal in $\stmodg$, meaning that it is a triangulated 
subcategory that is both closed under taking direct summands and 
closed under tensor products with arbitrary objects in $\stmodg$. 

For any such $V$ and $\CM_V$, there is a distinguished 
triangle \cite{R} in $\Stmodg$ having the form 
\[
\xymatrix{
\CS_V: & 
{} \ar[r] & \CE_V \ar[r]^{\sigma_V} & k \ar[r]^{\tau_V} & \CF_V \ar[r] & 
\Omega(\CE_V) \ar[r] & {}
}
\]
where $\CE_V$ and $\CF_V$ are idempotent modules in that 
$\CE_V \otimes \CE_V \cong \CE_V$, and $\CF_V \otimes \CF_V \cong \CF_V$
in the stable category. In addition $\CE_V \otimes \CF_V \cong 0$, 
meaning that $\CE_V \otimes \CF_V$ is a projective module. 
The triangle has a couple of universal properties. Let $M$ be a 
finitely generated $kG$-module. The first property is that any map
$\gamma: N \to M$, with $N$ in $\CM_V$, factors through 
$\sigma_V \otimes \Id_M: \CE_V\otimes M \to k \otimes M \cong M$. 
The other property is that any map $\gamma: M \to N$, 
such that the third object in the triangle
of $\gamma$ is in $\CM_V$, factors through $\tau_V \otimes \Id_M:
M \cong k \otimes M \to \CF_V \otimes M$. 

As to support varieties, for any $kG$-module $M$, $\CV_G(M)$ is the 
collection of all points in $V_G(k)$ with the property that the 
pull back along a corresponding $\pi$-point $\alpha_K: K[t]/(t^p)
\to KG_K$ of the extended module $K \otimes M$ is not projective. 
In the case of the idempotent modules, 
with $V$ closed, $\CV_G(\CE_V)$ is the set 
of all points in $V$, while 
$\CV_G(\CF_V) = \CV_G(k) \setminus \CV_G(\CE_V)$, its complement.


\section{The locus of compactness}
First we make a series of observations that lead to the 
definition of many thick subcategories of $\Stmodg$ associated to 
subsets the projectived prime ideal spectrum of the cohomology 
ring $\HHH^*(G,k) \cong \Ext^*_{kG}(k,k)$. We show that under certain 
circumstance, there exist some different sorts of support varieties
based on cardinalities of dimensions of restrictions of the objects along 
$\pi$-points.  

By an idempotent module, we always mean a $kG$-module $M$ such 
that $M \otimes M \cong M$ in the stable category. Or said another 
way, $M$ is an idempotent module if $M \otimes M \cong M \oplus P$
where $P$ is a projective module. We recall that the only finitely 
generated idempotent module is the trivial module $k$. 
We refer to the modules $\CE_V$ and $\CF_V$, for $V$ closed
in $V_G(k)$ as the Rickard idempotent modules. 
In addition, there are many other idempotent modules, including those
defined for more general collections of points in $V_G(k)$. 
For example, a countable 
direct sum of copies of the trivial module is an idempotent module. 
If we let $M = \sum_{n\geq 0} \Omega^n(k)$, then a countable direct
sum of copies of $M$ is an idempotent module. 

For all of this, the Rickard idempotent modules have a very special 
property. To be precise we have the following. 

\begin{lemma} \label{lem:ric}
Suppose that $M$ is one of $\CE_V$ or $\CF_V$ for $V$ a closed subvariety
of $V_G(k)$. Then for any $\pi$-point $\alpha_K: K[t]/(t^p) \to KG_K$
we have that the restriction $\alpha_K(K \otimes M)$ is either the 
zero module or is isomorphic to $K$ in $\Stmod(K[t]/(t^p))$. 
The same is true of 
the tensor $\CE_V \otimes \CF_{V^\prime}$ for $V$ and $V^{\prime}$ 
closed subvarieties. 
\end{lemma}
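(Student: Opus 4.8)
The plan is to reduce the lemma to two standard inputs: that, after extension of scalars, restriction along a $\pi$-point is a triangulated functor commuting with arbitrary direct sums; and the support computations $\CV_G(\CE_V)=V$, $\CV_G(\CF_V)=\CV_G(k)\setminus V$, together with the identities $\CE_V\otimes\CE_{V'}\cong\CE_{V\cap V'}$ and $\CF_V\otimes\CF_{V'}\cong\CF_{V\cup V'}$ for Rickard idempotents. Fix a $\pi$-point $\alpha_K\colon K[t]/(t^p)\to KG_K$, set $A=K[t]/(t^p)$, and let $F$ be the composite $\Stmodg\xrightarrow{K\otimes_k-}\Stmod(KG_K)\xrightarrow{\alpha_K^{*}}\Stmod(A)$. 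Since $K\otimes_k-$ is exact and $\alpha_K$ is flat, $F$ is exact; since $KG_K$ is finitely generated and flat, hence free, over the Artinian local ring $A$, the functor $F$ sends projectives to projectives. Thus $F$ descends to a triangulated functor between the big stable categories which preserves arbitrary direct sums, and $F(k)\cong K$ because $\alpha_K$ carries $t$ into the augmentation ideal of $KG_K$.

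For $M=\CE_V$ or $M=\CF_V$ I would apply $F$ to the defining triangle $\CS_V$, obtaining a distinguished triangle $F(\CE_V)\to K\to F(\CF_V)\to\cdots$ in $\Stmod(A)$. By the definition of $\CV_G$, the object $F(\CE_V)$ is zero in $\Stmod(A)$ exactly when the class of $\alpha_K$ lies outside $\CV_G(\CE_V)=V$, and $F(\CF_V)$ is zero exactly when that class lies outside $\CV_G(\CF_V)=\CV_G(k)\setminus V$; hence for every $\alpha_K$ precisely one of $F(\CE_V)$, $F(\CF_V)$ vanishes. A distinguished triangle one of whose objects is zero identifies the remaining two, so $F(\CF_V)=0$ forces $F(\CE_V)\cong K$ and $F(\CE_V)=0$ forces $F(\CF_V)\cong K$. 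This settles the case of $\CE_V$ and $\CF_V$.

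For the mixed tensor $\CE_V\otimes\CF_{V'}$ I would use, according to the position of $[\alpha_K]$, one of the two triangles
\[
\CE_{V\cap V'}\to\CE_V\to\CE_V\otimes\CF_{V'}\to\cdots\qquad\text{and}\qquad\CE_V\otimes\CF_{V'}\to\CF_{V'}\to\CF_{V\cup V'}\to\cdots,
\]
gotten respectively by tensoring $\CS_{V'}$ with $\CE_V$ and $\CS_V$ with $\CF_{V'}$ and invoking the two idempotent identities. If $[\alpha_K]\notin V$, apply $F$ to the first triangle: $F(\CE_V)=0$ and $F(\CE_{V\cap V'})=0$ (as $V\cap V'\subseteq V$), so $F(\CE_V\otimes\CF_{V'})=0$. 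If $[\alpha_K]\in V'$, apply $F$ to the second: $F(\CF_{V'})=0$ and $F(\CF_{V\cup V'})=0$ (as $V'\subseteq V\cup V'$), so again $F(\CE_V\otimes\CF_{V'})=0$. If $[\alpha_K]\in V$ and $[\alpha_K]\notin V'$, apply $F$ to the first: now $F(\CE_{V\cap V'})=0$ while $F(\CE_V)\cong K$ by the previous paragraph, so $F(\CE_V\otimes\CF_{V'})\cong K$. These three cases are exhaustive, and they never require a K\"unneth-type identity $F(M\otimes N)\cong F(M)\otimes_K F(N)$: in each case one of the two outer terms of the chosen triangle is already zero, which is the whole point of using a pair of triangles.

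The part I expect to be the main obstacle is not the triangle chase but the foundational bookkeeping it rests on: checking that $F$ is genuinely a triangulated functor between the \emph{big} stable categories — in particular that $\CS_V$ and its tensor-twists arise from honest short exact sequences of (possibly infinite-dimensional) $kG$-modules to which the exact functor $F$ applies — and marshalling, in the generality of an arbitrary finite group scheme over an arbitrary field $k$, the support identifications for $\CE_V,\CF_V$ and the idempotent identities $\CE_V\otimes\CE_{V'}\cong\CE_{V\cap V'}$, $\CF_V\otimes\CF_{V'}\cong\CF_{V\cup V'}$ out of the $\pi$-point technology of Friedlander--Pevtsova and the Rickard-idempotent formalism. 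Once those are in place, what remains is exactly the short case analysis above.
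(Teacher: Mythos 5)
Your argument is correct and is essentially the paper's: restrict the extended triangle $K\otimes_k\CS_V$ along $\alpha_K$ and use the support computations $\CV_G(\CE_V)=V$, $\CV_G(\CF_V)=\CV_G(k)\setminus V$ to see that one of the two outer terms vanishes stably, forcing the other to be $K$. The only point where you go beyond the written proof is the mixed tensor $\CE_V\otimes\CF_{V'}$, which the paper leaves implicit and you settle by a correct triangle chase using the standard identities $\CE_V\otimes\CE_{V'}\cong\CE_{V\cap V'}$ and $\CF_V\otimes\CF_{V'}\cong\CF_{V\cup V'}$; this is in the same spirit and closes that case cleanly.
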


\begin{proof}
Consider the extended triangle $K \otimes_k \CS_V$. If the $\pi$-point 
$\alpha_K$ corresponds to a point in $V$, then it is does not correspond
to any point in $\CV_k(\CF_V)$. Consequently, $\alpha_K^*(K \otimes \CF_V)$ 
is a projective module, and by the triangle, 
$\alpha_K^*(K \otimes \CE_V) \cong K$ in the stable category. 
On the other hand,  if $\alpha_K$ 
does not correspond to a point in $V$, then $\alpha_K^*(K \otimes \CE_V)$
is projective and $\alpha_K^*(K \otimes \CF_V) \cong K$.
\end{proof}

The above lemma implies that all of the Rickard idempotent modules are 
contained in the full subcategory $\sfg$ of $\Stmodg$ consisting of all 
$kG$-modules $M$  with the property that $\alpha^*_K(K \otimes M)$ is finite 
dimensional for all $\pi$-points 
\[
\alpha_K: K[t]/(t^p) \to KG_K. 
\]
In other words, the restriction along any $\pi$-point of a module in 
$\sfg$ is a compact object in the stable category of $K[t]/(t^p)$.
The category $\sfg$ is triangulated because the restriction maps 
preserve triangles and if two objects in a triangle are compact then
so it the third. It is easy to see that it is also closed under 
finite direct sums and taking direct summands. 
In addition, by an arguments that is very similar to that of the proof
of Lemma \ref{lem:ric}, it contains the tensor product of any 
finite dimensional module with a Rickard idempotent module. 

An observation of Benson, Iyengar, Krause and Pevtsova helps us to make 
sense of this. 

\begin{prop} \label{prop:equi-pi} \cite{BIKP2}
Suppose that $\alpha, \beta: k[t]/(t^p) \to kG$ 
are equivalent $\pi$-points that are defined over $k$. 
Let $M$ be a $kG$-module such that $\alpha^*(M)$
is a compact object in $\Stmod(k[t]/(t^p))$. Then  
$\beta^*(M)$ is also compact.
\end{prop}

\begin{proof}
Let $M^* = \Hom_k(M, k)$ be the $k$-dual of $M$.  
There is a natural map $M \to M^{**}$ from $M$ to its double dual.
This is a $kG$-homomorphism. That is, while the action of $kG$ on $M^*$
is defined using the antipode of the Hopf structure, the action on
$M^{**}$ involves applying the antipode twice, which is the identity. 
Assume  $\alpha^*(M)$ is compact. Then the map to the double dual is an 
isomorphism in the stable category.  In particular,  
the cokernel $U$ of the natural map $M \to M^{**}$, which is an injective map, 
is a projective module on restriction along $\alpha$. But then, $\beta^*(U)$
is also projective since $\beta$ is equivalent to $\alpha$. 
It follows, that $\beta^*(M)$ is a compact object. 
\end{proof}

Suppose that $\CV$ is any subset of points in $V_G(k)$.
Let $\CN_{\CV}$ be the full subcategory of all $kG$-modules $M$
with the property that for any $\pi$-point $\alpha_K: K[t]/(t^p) \to KG_K$
whose equivalence class is
in the collection $\CV$, the restriction $\alpha^*_K(K \otimes M)$ is
a compact object, meaning isomorphic to a
finite dimensional object in the stable category.
Then $\CN_{\CV}$ is a thick triangulated subcategory
of $\Stmodg$.
All of this suggests a variant support variety. We thank Paul Balmer 
for suggesting the name.

\begin{defi}  \label{def:newsup}
For a $kG$-module $M$, let $\CU_G(M)$ be the collection of points in
$V_G(k)$ with the property that for any $\pi$-point
associated to this point,
\[
\xymatrix{
\alpha_K: A = K[t]/(t^p) \ar[r] & KG_K,
} 
\]
the restriction $\alpha^*_K(K \otimes M)$ is
isomorphic to a finite dimensional module in the stable category
$\Stmod(A)$. We call $\CU_G(M)$ the locus of compactness or compact locus
of the module $M$. 
\end{defi}

Note that $\CU_G(M)$ is not a closed subvariety of $V_G(k)$, but rather
only a subset of points. Indeed, we see from item (2) in the next 
proposition, that $\CU_G(M)$ is likely to contain an open set of 
$V_G(M)$. We list some of the properties of $\CU_G$.

\begin{prop} \label{prop:propU}
Suppose that $M$ and $N$ are $kG$-modules.
\begin{enumerate} 
\item If $M$ is a compact object or if $M$ is in $\sfg$, then 
$\CU_G(M) = V_G(k)$.
\item If $V$ is a point that is not in  $\CV_G(M)$, then $V \in \CU_G(M)$.
\item As noted above, any subset $\CV \subset V_G(k)$ defines a 
thick subcategory $\CN_{\CV}$ in $\Stmodg$. 
\item Suppose that $L \to M \to N$ is a triangle in $\Stmod(kG)$. Then
\[
\CU_G(L) \cap \CU_G(N) \subset \CU_G(M).
\]
\item Suppose that $V$ is a closed subvariety of $V_G(k)$. Then 
\[
\CU_G(M \otimes \CE_V) = (V \cap \CU_G(M)) \cup c(V) \quad \text{ and }
\]
\[
\CU_G(M \otimes \CF_V) = V \cap (\CU_G(M) \cap c(V)).
\]
where $c(V)$ is the complement of $V$ in $V_G(k)$. 
In particular, if $M$ is a compact object, 
then $\CU_G(M \otimes \CE_V) = V_G(k)
= \CU_G(M \otimes \CF_V)$.
\end{enumerate}
\end{prop}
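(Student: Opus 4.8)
The plan is to prove each of the five items in turn, and observe that most follow directly from Lemma~\ref{lem:ric} together with the fact, already recalled, that in the stable category $\Stmod(A)$ of $A=K[t]/(t^p)$ a triangle with two compact vertices has a compact third vertex.

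For item (1), if $M$ is finitely generated then $K\otimes M$ is finitely generated over $KG_K$, hence $\alpha_K^*(K\otimes M)$ is finite dimensional for every $\pi$-point; if $M$ lies in $\sfg$ the conclusion is the definition of $\sfg$. Item (2) is immediate: if $V\notin\CV_G(M)$ then by definition of support variety $\alpha_K^*(K\otimes M)$ is projective for any $\pi$-point $\alpha_K$ associated to $V$, hence zero in the stable category, which is certainly a compact object, so $V\in\CU_G(M)$. Item (3) is the observation already made in the paragraph preceding Definition~\ref{def:newsup} that $\CN_{\CV}$ is thick; I would simply point back to that, the point being that restriction along a fixed $\pi$-point is a triangulated functor and compact objects are closed under triangles, summands and shifts. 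For item (4), fix a point $V$ lying in $\CU_G(L)\cap\CU_G(N)$ and a $\pi$-point $\alpha_K$ associated to it. Applying $\alpha_K^*(K\otimes -)$ to the triangle $L\to M\to N$ (extending scalars to $K$ preserves triangles, and $\alpha_K^*$ is triangulated) gives a triangle in $\Stmod(A)$ whose outer two terms are compact by hypothesis, hence the middle term $\alpha_K^*(K\otimes M)$ is compact; since this holds for every $\pi$-point over $V$, we get $V\in\CU_G(M)$.

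Item (5) is the substantive one. Fix a point $W\in V_G(k)$ and a $\pi$-point $\alpha_K$ associated to $W$; by Lemma~\ref{lem:ric} the restriction $\alpha_K^*(K\otimes\CE_V)$ is either $0$ or isomorphic to $K$ in $\Stmod(A)$, and it is $\cong K$ precisely when $W\in V$, and $0$ when $W\in c(V)$. Now $\alpha_K^*\bigl(K\otimes(M\otimes\CE_V)\bigr)\cong\alpha_K^*(K\otimes M)\otimes_K\alpha_K^*(K\otimes\CE_V)$, since restriction along the Hopf map $\alpha_K$ and extension of scalars are both tensor functors. If $W\in c(V)$ this is $0$, hence compact, so $c(V)\subseteq\CU_G(M\otimes\CE_V)$. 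If $W\in V$ it is $\alpha_K^*(K\otimes M)$ itself, which is compact iff $W\in\CU_G(M)$; hence the part of $\CU_G(M\otimes\CE_V)$ inside $V$ is exactly $V\cap\CU_G(M)$. Combining, $\CU_G(M\otimes\CE_V)=(V\cap\CU_G(M))\cup c(V)$. The formula for $\CF_V$ is identical in spirit, using the other half of Lemma~\ref{lem:ric}: over a point $W\in V$ the restriction of $\CF_V$ is $0$, so $\alpha_K^*(K\otimes(M\otimes\CF_V))$ is compact; over $W\in c(V)$ it is $\cong K$, so the restriction of $M\otimes\CF_V$ agrees with that of $M$, giving compactness iff $W\in\CU_G(M)$. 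Thus $\CU_G(M\otimes\CF_V)=c(V)\cap\CU_G(M)$, which is what the displayed formula says once one notes $V\cap c(V)=\emptyset$ forces the intersection with $V$ to be vacuous (so the statement should read $\CU_G(M\otimes\CF_V)=\CU_G(M)\cap c(V)$). The final ``in particular'' clause is then immediate from item (1): if $M$ is compact then $\CU_G(M)=V_G(k)$, so $(V\cap V_G(k))\cup c(V)=V\cup c(V)=V_G(k)$ and $V_G(k)\cap c(V)$ — wait, one must be careful here, and this is where I expect the only real friction.

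The one subtlety, and the main thing to get right, is the behavior of $M\otimes\CF_V$ when $M$ is compact: $M\otimes\CF_V$ need not be compact (indeed $\CF_V$ is infinitely generated), yet its compact locus should be all of $V_G(k)$. This is consistent with the corrected formula $\CU_G(M\otimes\CF_V)=\CU_G(M)\cap c(V)$ only if one also remembers that $\CU_G(M\otimes\CF_V)$ automatically contains $V$ because over a point of $V$ the restriction of $\CF_V$ vanishes — so in fact $\CU_G(M\otimes\CF_V)\supseteq V$ always, independent of $M$. Reconciling this with the displayed ``$V\cap(\CU_G(M)\cap c(V))$'', which is empty, shows the paper's display is mis-stated; the correct statement is $\CU_G(M\otimes\CF_V)=V\cup(\CU_G(M)\cap c(V))$, and then for $M$ compact this is $V\cup c(V)=V_G(k)$, matching the ``in particular'' clause. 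So my plan concludes by deriving the two formulas in the form $\CU_G(M\otimes\CE_V)=(V\cap\CU_G(M))\cup c(V)$ and $\CU_G(M\otimes\CF_V)=V\cup(\CU_G(M)\cap c(V))$ — which I would present as the intended reading — and then reading off the compact case. The only genuine obstacle is bookkeeping: making sure the tensor-compatibility $\alpha_K^*(K\otimes(X\otimes Y))\cong\alpha_K^*(K\otimes X)\otimes_K\alpha_K^*(K\otimes Y)$ is invoked correctly and that one has correctly tracked which of $\{0,K\}$ occurs on $V$ versus $c(V)$ for $\CE_V$ and for $\CF_V$.
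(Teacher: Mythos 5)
Your handling of items (1)--(4) is fine and matches the paper, which simply declares them obvious; and your diagnosis that the displayed formula for $\CU_G(M\otimes\CF_V)$ must be read as $V\cup(\CU_G(M)\cap c(V))$ is the right reading (it is the only one consistent with the ``in particular'' clause and with the paper's own proof, since points of $V$ always lie in $\CU_G(M\otimes\CF_V)$). The genuine gap is in your argument for item (5): it rests entirely on the isomorphism $\alpha_K^*\bigl(K\otimes(X\otimes Y)\bigr)\cong\alpha_K^*(K\otimes X)\otimes_K\alpha_K^*(K\otimes Y)$, which you justify by calling $\alpha_K$ ``the Hopf map.'' A $\pi$-point is only a flat algebra map factoring through an abelian unipotent subgroup scheme; it need not be a map of Hopf algebras, and restriction along it does not preserve the tensor structure. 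Section \ref{sec:hopf} of the paper makes exactly this caveat, and Example \ref{ex:restr-envel} explains that monoidality of restriction is a special feature of the distinguished $\pi$-points in the restricted Lie algebra case. So the key step of your proof is unjustified (and false in general) at the level of generality of the proposition.

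The paper's own argument avoids monoidality altogether: it works with the triangle $\CE_V\otimes M\to M\to\CF_V\otimes M$ together with the tensor-product property of $\pi$-supports. If the class of $\alpha_K$ lies in $V$, then it is not in $\CV_G(\CF_V\otimes M)\subseteq\CV_G(\CF_V)=c(V)$, so $\alpha_K^*(K\otimes(\CF_V\otimes M))$ is projective and the restricted triangle gives $\alpha_K^*(K\otimes(\CE_V\otimes M))\cong\alpha_K^*(K\otimes M)$ in the stable category; hence on $V$ the loci of $M$ and $\CE_V\otimes M$ agree. If the class lies in $c(V)$, then $\alpha_K^*(K\otimes(\CE_V\otimes M))$ is itself projective, hence compact, so $c(V)\subseteq\CU_G(\CE_V\otimes M)$; the $\CF_V$ formula is symmetric. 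This yields exactly your two formulas without ever asserting that $\alpha_K^*$ is a tensor functor. To repair your proof, replace the monoidality step by this triangle-plus-support argument (or else restrict the claim to $\pi$-points that are Hopf algebra maps, which is strictly weaker than the statement).
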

\begin{proof}
All but the last of the items are obvious. For the last item, suppose that
$\alpha_K:K[t]/(t^p) \to KG_K$ is a $\pi$-point for some extension $K$ of 
$k$. If the equivalence class of $\alpha_K$ is in $V$, then 
$\alpha_K^*(K \otimes \CF_V)$ is projective. It follows that 
in this case, 
\[
\alpha_K^*(K \otimes (\CE_V \otimes M)) = 
\alpha_K^*(K \otimes M). 
\]
Hence, $V \cap \CU_G(M) = V \cap \CU_G(\CE_V \otimes M).$

On the other hand, suppose that the class of $\alpha_K$ is not 
in $V$. Then $\alpha_K^*(K \otimes \CE_V)$ is projective and so is
$\alpha_K^*(K \otimes (\CE_V \otimes M))$. Hence, the class
of $\alpha_K$ is in $\CU_G(M)$. This proves the first statement.
The proof of the decomposition of $\CU_G(M \otimes \CF_V)$
is very similar. The last statement follows from the previous ones,
noting that $\CU_G(M) = V_G(k)$. 
\end{proof}

We remark that statement (4) of the proposition turns the usual 
condition for support varieties on its head. That is, for 
$L \to M \to N$ a triangle in $\Stmodg$ we have that 
\[
\CV_G(M) \quad \subseteq \quad \CV_G(L) \cup \CV_G(N)
\]
which is something like a reverse of the condition in (4).

There is one case where we can go even further with known results.

\begin{exm} \label{ex:restr-envel}
Suppose that $kG$ is the restricted enveloping algebra of a
restricted $p$-Lie algebra. In this case, $V_G(k)$ is identified with the 
restricted null cone $\CN_1$ (see \cite{FPar, Jan}). That is, we have an
embedding of the Lie algebra into its restricted enveloping algebra, which is 
the free tensor algebra on the Lie algebra modulo the relations given by the 
Lie bracket and the $p^{th}$-power operation.
The restricted null cone $\CN$ is the set of all elements $x$ of
the Lie algebra such that $x^{[p]} = 0$, where $x \mapsto x^{[p]}$ is the 
$p^{th}$-power operation.
In this case, every element of $V_G(k) \cong \CN$ has a unique
distinguished $\pi$-point that is a map of Hopf algebras. Consequently,
we can define $\CU_G(M)$ to be the collection of all
points in $V_G(k)$ such that the restriction 
of $M$ along the distinguished $\pi$-point associated to this point
is compact. That is, instead of letting $\CU_G(M)$, for a module
$M$, be a set of equivalence classes of $\pi$-points with the 
finite dimensionality property
for {\it all} elements of the equivalence class, we measure $\CU_G(M)$
only on the distinguished $\pi$-point in each class. 

The main fact is that if $\alpha_K:K[t]/(t^p) \to KG_K$ is such a 
distinguished $\pi$-point, then $\alpha_K$ is a Hopf algebra map. Most
importantly, restriction along $\alpha_K$ commutes with the tensor
product operation. This means that there is a tensor product theorem,
which  says for modules $M$ and $N$ that 
\[
\CU_G(M \otimes N) = (\CU_G(M) \cap \CU_G(N)) \cup c(\CV_G(M)) 
\cup c(\CV_G(N)),
\]
where $c(\CV_G(M))$ is the complement of $\CV_G(M)$ in $\CV_G(k)$.
That is, for a distinguished $\pi$-point $\alpha_K$, 
$\alpha^*_K(K \otimes (M \otimes N))$ is finite dimensional in the stable
category if the restrictions of both $M$ and $N$ 
are finite dimensional or if one of the 
two is projective. In particular, if $V$ is closed subvariety of 
$V_G(k)$ we have that $\CN_V$ is closed under tensor products in $\Stmodg$. 
\end{exm}


\section{Dependence on Hopf structure} \label{sec:hopf}
We notice that in some cases a given algebra over $k$ may have many possible
Hopf algebra structures. In particular, if $B$ is the truncated polynomial
ring $B = k[t_1, \dots, t_r]/(t_1^p, \dots, t_r^p)$, then we can make $B$ into 
a Hopf algebra by choosing any collection $X_1, \dots X_r \in \Rad(kG)$ 
that generate the radical of $B$. This means that the elements should be 
linearly independent modulo $\Rad^2(B)$. Then the elements $1+X_1, 
\dots, 1+X_r$ generate an elementary abelian subgroup $H$ order $p^r$ in the 
group of units of $B$, and $B$ can be taken to be the group algebra $kH$. We
can then impose the Hopf algebra structure of $kH$ on $B$. Similarly we 
can take the subspace of $B$ generated by $X_1, \dots, X_r$ to be a 
commutative restricted Lie algebra $L$ with trivial $p^{th}$-power operation. Then
$B$ is isomorphic to the restricted enveloping algebra of $L$, and it can
be equipt with the induced Hopf structure. 

One implication of the above is the following, a result that we find useful.

\begin{lemma} \label{lem:extendHopf}
Let $kG = k[t_1, \dots, t_r]/(t_1^p, \dots, t_r^p)$.
Assume that $A = k[t]/(t^p)$ is made into a Hopf algebra by regarding it 
either as a group algebra (the group being generated by $g = 1+t$) or as a
restricted enveloping algebra of the one dimensional Lie algebra $<t>$.
Then for any $\pi$-point defined over $k$, $\alpha: A \to kG$, there is a 
Hopf algebra structure on $kG$ such that $\alpha$ is a homomorphism of 
Hopf algebras.
\end{lemma}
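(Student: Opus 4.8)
The plan is to reduce the statement to a concrete linear-algebra fact about radicals of truncated polynomial rings, by choosing a new generating set for $\Rad(kG)$ adapted to $\alpha$. Write $\alpha(t) = u$, where $u \in \Rad(kG)$. Since $\alpha$ is a $\pi$-point defined over $k$, the element $u$ is not in $\Rad^2(kG)$; that is, the image of $u$ in $\Rad(kG)/\Rad^2(kG)$ is nonzero. First I would extend $u$ to a new algebra generating set: pick $X_1 = u$ and then choose $X_2, \dots, X_r \in \Rad(kG)$ so that the images of $X_1, \dots, X_r$ form a basis of the $r$-dimensional space $\Rad(kG)/\Rad^2(kG)$. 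By the discussion preceding the lemma, such a choice of generators lets us identify $kG$ with the group algebra $kH$ of the elementary abelian group $H = \langle 1+X_1, \dots, 1+X_r\rangle$, or, alternatively, with the restricted enveloping algebra of the commutative restricted Lie algebra $L = \langle X_1, \dots, X_r\rangle$ with trivial $p$th power operation. In either case this puts a Hopf algebra structure on $kG$.

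Next I would check that, with this Hopf structure, $\alpha$ is a Hopf algebra homomorphism. Consider first the group-algebra case: $A = k[t]/(t^p)$ carries the group-algebra Hopf structure with grouplike element $g = 1+t$, so $\Delta(g) = g \otimes g$. Under the identification $kG = kH$, the element $1 + X_1 = 1 + u = \alpha(g)$ is one of the chosen grouplike generators of $H$, hence $\Delta_{kG}(\alpha(g)) = \alpha(g) \otimes \alpha(g) = (\alpha \otimes \alpha)(\Delta_A(g))$; compatibility with counit and antipode is immediate since grouplike elements go to grouplike elements. Since $g$ generates $A$ as an algebra, this suffices to conclude $\alpha$ is a map of Hopf algebras. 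In the restricted-enveloping-algebra case, $A$ carries the Hopf structure with $t$ primitive, $\Delta(t) = t \otimes 1 + 1 \otimes t$, and under $kG = u(L)$ the element $X_1 = u = \alpha(t)$ is a primitive generator, so again $\Delta_{kG}(\alpha(t)) = (\alpha \otimes \alpha)(\Delta_A(t))$, and $\alpha$ respects counit and antipode.

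The only subtlety — and the step I would treat most carefully — is the claim that $u \notin \Rad^2(kG)$, so that $u$ really can be taken as a member of a basis of $\Rad(kG)/\Rad^2(kG)$. This is where the definition of a $\pi$-point defined over $k$ enters: recalling from the Background section that such an $\alpha$ has the form $\alpha(t) = \sum_i \alpha_i(g_i - 1) + w$ with $w \in \Rad^2$ and some $\alpha_i \neq 0$, flatness of $\alpha$ forces the linear part to be nonzero, hence $\alpha(t)$ has nonzero image in $\Rad(kG)/\Rad^2(kG)$. (More intrinsically, if $\alpha(t) \in \Rad^2(kG)$ then $\alpha(t)^{p-1} \in \Rad^{2(p-1)}(kG)$, and for $p \geq 2$ one has $2(p-1) \geq p$, forcing $\alpha(t)^{p-1}$ into $\Rad^p(kG) = 0$ when $kG$ has Loewy length $p$ in each generator-direction; in general $\alpha(t)^{p-1} = 0$ contradicts faithful flatness of $k[t]/(t^p) \to kG$, since $t^{p-1} \neq 0$ in $A$ and a flat — equivalently, for these finite-dimensional algebras, faithful — map is injective.) Once this is in place, the rest is the routine extension-of-a-basis argument and the formal verification above, so I expect no further obstacles.
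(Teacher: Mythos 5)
Your proof is correct and is essentially the paper's own argument: the paper offers no separate proof, treating the lemma as an immediate consequence of the preceding paragraph by taking $X_1 = \alpha(t)$ as one of the radical generators $X_1,\dots,X_r$ (linearly independent modulo $\Rad^2(kG)$) that define the group-algebra or restricted-enveloping-algebra Hopf structure, so that $\alpha(t)$ is primitive (resp.\ $1+\alpha(t)$ grouplike). One small caution: your parenthetical ``intrinsic'' justification that $\alpha(t)\notin\Rad^2(kG)$ is not right as stated, since $\Rad^p(kG)\neq 0$ once $r\geq 2$ (the Loewy length is $r(p-1)+1$) and injectivity of $\alpha$ alone does not exclude $\alpha(t)\in\Rad^2(kG)$; but your primary appeal to the paper's description of a $\pi$-point defined over $k$ (flatness forces a nonzero linear term) is exactly the fact needed, so the argument stands.
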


The ability to choose the Hopf algebra structure is important 
when considering restrictions on the categories. In particular, if we 
are given a subalgebra $A$ of $kG$ that is a flat embedding, then the 
restriction map to the subalgebra induces a functor on stable categories. 
However, it does not preserve the tensor structure unless $A$ is a 
Hopf subalgebra. 

On the other hand, there are some cases where it does 
not matter. The following is one such case. 

\begin{prop} \label{prop:tensor}
Suppose that $kG$ is a finite group scheme and 
$\alpha: A = k[t]/(t^p) \to kG$ is a $\pi$-point defined over $k$.
Let $V$ be the closed point of $V_G(k)$ associated to $\alpha$ and let
$\CE_V$ be the associated idempotent module. For any
$kG$-module $M$, the restriction $\alpha^*(M)$ is isomorphic in
$\Stmod(A)$ to a finite dimensional module if and only if the restriction
$\alpha^*(\CE_V \otimes M)$ is also isomorphic to a finite dimensional
module.
\end{prop}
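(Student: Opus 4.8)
The statement asserts an equivalence: $\alpha^*(M)$ is finite-dimensional in $\Stmod(A)$ if and only if $\alpha^*(\CE_V\otimes M)$ is. The plan is to reduce this to Lemma~\ref{lem:ric} together with the behavior of $\alpha^*$ on the triangle $\CS_V$ tensored with $M$. First I would fix a Hopf algebra structure on $kG$ relative to which $\alpha$ is a map of Hopf algebras; this is exactly what Lemma~\ref{lem:extendHopf} provides (after reducing to the case $kG = k[t_1,\dots,t_r]/(t_i^p)$, which is the local structure controlling a $\pi$-point — one localizes at the unipotent abelian subgroup scheme through which $\alpha$ factors, exactly as in the definition of a $\pi$-point). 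With such a structure chosen, restriction along $\alpha$ is a tensor functor, so $\alpha^*(\CE_V\otimes M)\cong \alpha^*(\CE_V)\otimes_A \alpha^*(M)$ in $\Stmod(A)$.

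Next I would invoke Lemma~\ref{lem:ric}: since $\alpha$ corresponds to the point $V\in V_G(k)$, the restriction $\alpha^*(\CE_V)$ is isomorphic in $\Stmod(A)$ to $A/(t)\cong k$ (it is nonzero because $V\in\CV_G(\CE_V)$). Hence $\alpha^*(\CE_V\otimes M)\cong k\otimes_A \alpha^*(M)$ in the stable category of $A$. So the problem becomes: for an $A$-module $W$ (here $W=\alpha^*(M)$), is $W$ finite-dimensional in $\Stmod(A)$ iff $k\otimes_A W$ is? One direction is immediate: if $W$ is stably finite-dimensional then $k\otimes_A W$ is too, since $\otimes_A$ with a fixed finite-dimensional module preserves compactness (and a stably-finite-dimensional module is a compact object, by the remark in the excerpt that $\stmod$ is the compact part of $\Stmod$). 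The reverse direction — that $k\otimes_A W$ stably finite-dimensional forces $W$ stably finite-dimensional — is where the real content sits. Here I would use that $A=k[t]/(t^p)$ is a principal ideal ring, so every $A$-module decomposes as a direct sum of Jordan blocks $A/(t^j)$, $1\le j\le p$, and the stable category is controlled entirely by the non-free part. If $W\cong F\oplus \bigoplus_{j<p}(A/(t^j))^{(\mu_j)}$ with $F$ free, then $k\otimes_A W$ is finite-dimensional in $\Stmod(A)$ — equivalently the total multiplicity $\sum_{j\le p}\mu_j$ counted appropriately is finite — precisely when all the $\mu_j$ are finite; one checks $\dim_k(k\otimes_A A/(t^j))=1$ for every $j$ including $j=p$ (where $A/(t^p)=A$ is free, and $k\otimes_A A=k$), so stable finiteness of $k\otimes_A W$ bounds each $\mu_j$ and thus the non-free part of $W$ is finite-dimensional, i.e. $\alpha^*(M)\cong\alpha^*(W)$ is stably finite-dimensional.

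\textbf{The main obstacle.} The technical subtlety I expect to have to handle carefully is the passage from the general $kG$ to the truncated polynomial algebra so that Lemma~\ref{lem:extendHopf} applies: a $\pi$-point $\alpha\colon A\to kG$ factors through $kE$ for some unipotent abelian subgroup scheme, and one must argue that replacing $kG$ by $kE$ (and hence by a truncated polynomial ring) does not affect any of the three modules in play — $\alpha^*(M)$, $\alpha^*(\CE_V)$, $\alpha^*(\CE_V\otimes M)$ — and that the idempotent module $\CE_V$ restricts correctly down to $kE$ relative to the point of $V_E(k)$ below $V$. The cleanest route is to observe that the whole statement only concerns $\alpha^*$ applied to things, so one may work entirely with $\alpha$ factored as $A\to kE\to kG$ and use that $\CE_V$ as a $kG$-module, restricted to $kE$, is again a module whose restriction along the $\pi$-point $A\to kE$ satisfies the conclusion of Lemma~\ref{lem:ric} (it corresponds to the image point in $V_E(k)$). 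Once that bookkeeping is in place, the argument is the two-line computation in $\Stmod(k[t]/(t^p))$ described above. A secondary point worth a sentence: one should note that the "if and only if" is genuinely between isomorphism classes in the stable category, so all the $\Omega$'s and projective summands introduced by tensoring and restricting are harmless, which is why phrasing everything in $\Stmod(A)$ from the outset is the right move.
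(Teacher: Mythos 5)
There is a genuine error at the heart of your reduction: you conflate the tensor product of the tensor triangulated category $\Stmod(A)$ (which is $\otimes_k$ with diagonal action coming from the Hopf structure on $A=k[t]/(t^p)$, with unit the trivial module $k$) with the ring-theoretic tensor $-\otimes_A-$. Your computation $k\otimes_A A/(t^j)\cong k$ and the whole Jordan-block analysis of $k\otimes_A W=W/tW$ concern the latter, which is not the operation appearing in $\CE_V\otimes M$ and does not even descend to the stable category: for instance $W=A^{(\infty)}$ is stably zero, yet $k\otimes_A W$ is an infinite direct sum of trivial modules, which is not stably finite dimensional. So the direction you call ``immediate'' fails as stated, and the ``real content'' you identify is content about the wrong functor. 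If you use the correct tensor, Lemma~\ref{lem:ric} gives $\alpha^*(\CE_V)\cong k\oplus(\text{proj})$, and then $\alpha^*(\CE_V)\otimes_k\alpha^*(M)\cong\alpha^*(M)$ in $\Stmod(A)$ outright --- no block decomposition is needed at all; the entire second half of your argument evaporates.

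A second, subtler gap is the Hopf-compatibility maneuver. To write $\alpha^*(\CE_V\otimes M)\cong\alpha^*(\CE_V)\otimes\alpha^*(M)$ you need $\alpha$ to be a map of Hopf algebras \emph{for the comultiplication actually used to form} $\CE_V\otimes M$, which is the given one on $kG$; choosing a new Hopf structure on $kE$ (or on a truncated polynomial quotient) after the fact, via Lemma~\ref{lem:extendHopf}, changes the tensor product and does not retroactively apply to the module $\CE_V\otimes M$ already in hand --- and Lemma~\ref{lem:extendHopf} is only stated for $kG=k[t_1,\dots,t_r]/(t_i^p)$, whereas the proposition is for an arbitrary finite group scheme. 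The paper's proof avoids monoidality of $\alpha^*$ entirely: the triangle $M\otimes\CE_V\to M\to M\otimes\CF_V$ is characterized by universal properties independent of the choice of tensor (Theorem 2.6 of \cite{BF}), restriction along $\alpha$ preserves triangles, and $\alpha^*(M\otimes\CF_V)$ is stably zero because the support of $\CF_V$ (hence of $M\otimes\CF_V$) misses $V$; the restricted triangle then gives $\alpha^*(\CE_V\otimes M)\cong\alpha^*(M)$ in $\Stmod(A)$, from which the equivalence is immediate. If you want to salvage your route, you would need either that independence argument (at which point you have reproduced the paper's proof) or an honest justification that $\alpha$ can be taken to be a Hopf map for the given structure, which is exactly the hypothesis the proposition is designed to avoid.
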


\begin{proof}
The point is that the subcategory $\CM_V$ of all $kG$-modules whose variety
is contained in $V$ is the same regardless of the tensor product. Likewise,
the triangle 
\[
\xymatrix{
\CS_V: &
{} \ar[r] & M \otimes \CE_V \ar[r]^{1 \otimes \sigma_V} & 
M \otimes k \ar[r]^{1 \otimes \tau_V} & M \otimes \CF_V \ar[r] & {}
}
\]
is distinguished by universal properties that don't depend on the 
tensor product (see Theorem 2.6 of \cite{BF}). 
In particular, $\alpha^*(M \otimes \CF_V) = \{0\}$ in the stable 
category $\Stmod(A)$, since the variety of $\CF_V$ does not 
contain $V$. Hence, we have an isomorphism 
$\alpha^*(M \otimes \CE_V) \cong \alpha^*(M)$. 
\end{proof}


\section{Endomorphism rings}  \label{sec:endomorph}
In this section we investigate the endomorphism rings of the module $\CE_V$ and
associated modules. Earlier work by Daugulis \cite{Dau}, showed that
under reasonable hypotheses on $V$, $\Endul_{kG}(\CE_V)$ is local and 
locally nilpotent. Krause \cite{Kra} derived some properties of $\CE_V$
using the observation that $\CE_V$ is endofinite. He also showed that 
$\Endul_{kG}(\CE_V)$ is commutative, since it is the center of the 
localizing subcategory of all $kG$-modules with variety contained in $V$.
That is, it is the algebra of natural transformation on the identity 
functor on that category. In this section, we give an explicit 
calculation of $\Endul_{kG}(\CE_V)$ in a specific case. 

We assume throughout that 
$kG = k[t_1, \dots, t_r]/(t_1^p, \dots, t_r^p)$ is the group algebra of 
an elementary abelian $p$-group or the restricted enveloping algebra 
of a  commutative Lie algebra with vanishing $p^{th}$-power operation. 
We assume a coalgebra structure, but in 
the end, it does not matter what that structure is. 

Assume that we have a $\pi$-point $\alpha: k[t]/(t^p) \to kG$ representing
a closed point $V$ in $V_G(k)$. The structure of the idempotent module
$\CE_V$ is presented in  \cite{Cendo1}. It is described as follows. For 
notation, let $kC$ be the image of $\alpha$ and let $Z = \alpha(t)$. 
Choose elements $X_1, \dots, X_{r-1}$ in $\Rad(kG)$ such 
$X_1, \dots, X_{r-1}, Z$ generate $\Rad(kG)$. Let $kH$ be the subalgebra
generated by $1$ and $X_1, \dots, X_{r-1}$, so that $kG \cong kH \otimes kC$.

\begin{thm} \label{thm:strEV}  \cite[Proposition 5.4]{Cendo1}
Suppose that 
\[
\xymatrix{
(P_*, \varepsilon): & {} \ar[r] & P_2 \ar[r]^{\partial} &
P_1 \ar[r]^{\partial}  & P_0 \ar[r]^{\varepsilon} & k \ar[r] & 0
}
\]
is a minimal projective $kH$-resolution of $kG$. 
Let $U$ be the inflation to $kG$ of the 
indecomposable $kC$-module of dimension $p-1$, so that 
$U_{\downarrow C} \cong \Omega_{kC}(k)$, has basis $1, Z, \dots, Z^{p-2}$
and has trivial action by $H$. Note that if $p=2$ then $U \cong k$. 
Then the restriction of $\CE_V$ to $kH$ is the direct sum 
\[
(\CE_V)_{\downarrow H} \cong P_0 \oplus (P_1 \otimes U) \oplus P_2 \oplus 
(P_3 \otimes U) \oplus \dots .
\]
The action of $Z$ on $\CE_V$ is given by the following formula. 
Assume that $x \in P_n$. For $n$ odd let 
\[
Z(x \otimes Z^j) = \begin{cases} x \otimes Z^{j+1} \in P_n \otimes U
& \text{ if } 0 \leq j < p-2, \\
\partial(x) \in P_{n-1} & \text{ if } j = p-2,
\end{cases}
\]
while for $n$ even
\[
Z(x) = \begin{cases} \partial(x) \otimes 1 \in P_{n-1} \otimes U 
& \text{ if } n > 0, \\
0 & \text{ if } n = 0.   \end{cases}
\]
\end{thm}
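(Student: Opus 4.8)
The plan is to realize $\CE_V$ explicitly as a colimit (telescope) of the Carlson modules $L_\zeta$ and then compute the action of $Z=\alpha(t)$ on that telescope, transporting the resolution data through the restriction to $kH$. First I would recall from \cite{Cendo1} that, because $V$ is a closed point, $\CE_V$ is built as the homotopy colimit of a tower $k = M_0 \to M_1 \to M_2 \to \cdots$ whose successive cones lie in $\CM_{V'}$ for $V' = c(V)$ the complement, or dually that $\CE_V = \operatorname{hocolim} \Omega^{n}(L_{\zeta_n})$ for suitable cohomology classes; the key structural input is that as a $kG$-module $\CE_V$ has a filtration whose sections are the terms of a minimal resolution. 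The first real step is therefore to set up the tower so that, after restriction to $kH$, the filtration splits: since $kG \cong kH \otimes kC$ is free over $kH$, every projective $kG$-module restricts to a projective (hence injective, since $kH$ is self-injective) $kH$-module, and the connecting maps in the tower restrict to split maps. This forces $(\CE_V)_{\downarrow H}$ to be a direct sum, and identifying the summands with $P_0, P_1\otimes U, P_2, P_3\otimes U,\dots$ comes from tracking which piece of the resolution $\cite{Cendo1}$ contributes at each stage of the telescope, the tensor factor $U$ appearing in odd degrees because those stages are built from $\Omega$-shifts that on restriction to $kC$ look like $\Omega_{kC}(k)$, which is exactly the $(p-1)$-dimensional module $U_{\downarrow C}$.

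Next I would pin down the $Z$-action. The point is that $Z$ acts trivially on each $P_n$ qua $kH$-module data but nontrivially as the "internal differential" of the telescope, so the formula should fall out of chasing a single element through two consecutive stages. For $n$ even, an element $x \in P_n$ sits in a stage with trivial $C$-action on its own level, and $Zx$ must be its image under the boundary map composed with the inclusion into the next stage; since the next (odd) stage carries the $U$-factor, $Zx = \partial(x)\otimes 1 \in P_{n-1}\otimes U$, and when $n=0$ there is no further stage down so $Zx=0$. For $n$ odd, the element $x\otimes Z^j$ lives in $P_n\otimes U$ where $U$ already records powers of $Z$; multiplying by $Z$ raises $j$ by one as long as $j<p-2$, and when $j=p-2$ we have reached the socle of $U_{\downarrow C}$, so the next application of $Z$ lands via $\partial$ into $P_{n-1}$. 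I would verify this is consistent by checking $Z^p = 0$ on $\CE_V$ (equivalently $\alpha$ is flat of the right form) and that $\partial^2 = 0$ makes the two cases compose correctly across a degree drop of two.

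The main obstacle I expect is the bookkeeping that identifies the telescope stages of $\CE_V$ with the terms of the minimal $kH$-resolution \emph{with the correct parity and the correct placement of the $U$-factor}, and in particular showing the restriction to $kH$ splits as a direct sum rather than merely having the stated composition factors — this is where self-injectivity of $kH$ and freeness of $kG$ over $kH$ have to be used carefully, together with the fact that a minimal resolution has no projective summands splitting off, so that the summands listed are exactly the ones appearing and no cancellation occurs in the colimit. A secondary subtlety is making sure the $Z$-action formula is well-defined independently of the chosen coalgebra structure on $kG$, which is legitimate here because the construction of $\CE_V$ via $\CM_V$ and the triangle $\CS_V$ depends only on the closed subvariety $V$ (Theorem 2.6 of \cite{BF}), not on the Hopf structure, exactly as in Proposition~\ref{prop:tensor}. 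Once the filtration/splitting is in hand, the $Z$-action computation is a short direct check, so essentially everything rests on the explicit description imported from \cite[Proposition 5.4]{Cendo1}.
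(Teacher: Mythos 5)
The paper itself offers no argument for this statement: it is quoted verbatim from \cite[Proposition 5.4]{Cendo1}, so there is no internal proof to compare against, and your proposal has to stand on its own. It does not, because it is essentially circular. The entire content of the theorem is the claim that $\CE_V$ carries a filtration $\CE_0 \subseteq \CE_1 \subseteq \cdots$ whose sections are $P_0$, $P_1 \otimes U$, $P_2, \dots$ with $Z$ acting through $\partial$; you take exactly this as ``the key structural input,'' and you close by saying that everything rests on the explicit description imported from \cite[Proposition 5.4]{Cendo1}, which is the statement to be proved. The telescope you invoke is never actually connected to the minimal $kH$-resolution: you do not specify which tower you take, why its stages restrict to $kH$ as the modules $P_n$ with the $U$-twist appearing precisely in odd degrees, or why no cancellation occurs in the colimit --- ``tracking which piece of the resolution contributes at each stage'' is precisely the computation that is missing. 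Moreover, the tower you describe, $k = M_0 \to M_1 \to \cdots$ with successive cones in $\CM_{c(V)}$, is not the construction of $\CE_V$ at all: $c(V)$ is not a closed subvariety, and a tower with cones supported off $V$ would compute a localization of the $\CF$-type, not the colocalization. The module $\CE_V$ is built inside the localizing subcategory generated by $\CM_V$, e.g.\ from modules induced from $kC$, mapping to $k$.

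A non-circular argument runs in the opposite direction. Define a module $X$ by the stated $kH$-decomposition and $Z$-action; verify it is a well-defined $kG$-module (the $Z$-action commutes with $kH$ and $Z^p = 0$, using $\partial^2 = 0$); observe that the finite stages $\CE_n$ are finitely generated, free as $kH$-modules, and have variety exactly $\{V\}$ (their restriction along any $\pi$-point not equivalent to $\alpha$ is projective), so that $X = \varinjlim \CE_n$ lies in the localizing subcategory generated by $\CM_V$; then exhibit the map $X \to k$ induced by $\varepsilon$ on $P_0$ and show that its cone admits no nonzero maps from objects of $\CM_V$ (equivalently, is $\CF_V$-local); finally invoke the uniqueness of the Rickard triangle to conclude $X \cong \CE_V$ and read off the formulas. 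Your consistency checks --- $Z^p=0$, compatibility with $\partial$, splitting of the restriction to $kH$ because projective $kH$-modules are injective --- are correct as far as they go, but none of them addresses the identification of the candidate module with $\CE_V$ via its universal property, which is where the actual content of the theorem lies.
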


Next we consider the endomorphism ring of $\CE_V$. The fact that 
$(P_*, \varepsilon)$ is a minimal resolution implies that 
$\partial(P_i) \subseteq \Rad_{kH}(P_{i-1})$ for all $i \geq 1$. 
As a consequence, $\Homul_{kG}(\CE_V, k) \cong \prod_{i=0}^\infty 
\Hom_{kH}(P_i,k)$. That is, the minimality of the resolution also 
means that $\Hom_{kH}(P_i, k) \cong \HHH^i(H,k)$.

In addition, applying $\Homul_{kG}(\CE_V, - )$ to the distinguished triangle 
$\CS_V$, we get that 
\[
\xymatrix@+3pc{
\Homul_{kG}(\CE_V, \CE_V) \ar[r]^{(\sigma_V)_*} & \Homul_{kG}(\CE_V,k)
}
\]
is an isomorphism. Putting these facts together, 
we have established the following. 

\begin{prop} \label{prop:endoE1}
The endomorphism ring $\Endul_{kG}(\CE_V) \cong \prod_{i=0}^\infty
\HHH^i(H,k).$ as an additive group. 
\end{prop}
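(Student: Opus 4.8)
The plan is to assemble the isomorphism from two ingredients already laid out in the text: the computation of $\Homul_{kG}(\CE_V,k)$ as an infinite product of cohomology groups of $H$, and the fact that $(\sigma_V)_*\colon\Endul_{kG}(\CE_V)\to\Homul_{kG}(\CE_V,k)$ is an isomorphism. First I would justify the former carefully. Using Theorem~\ref{thm:strEV}, the restriction $(\CE_V)_{\downarrow H}$ is the direct sum $\bigoplus_{n\geq 0}Q_n$ where $Q_{2m}=P_{2m}$ and $Q_{2m+1}=P_{2m+1}\otimes U$; since $U$ has trivial $H$-action and dimension $p-1$, each $Q_n$ is a projective $kH$-module (a direct sum of $p-1$ copies of $P_n$ in the odd case). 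A homomorphism $\CE_V\to k$ of $kG$-modules is in particular a $kH$-module map, and since $Z$ acts on $k$ trivially while the formula in Theorem~\ref{thm:strEV} shows $Z$ maps each summand into the previous ones (eventually into $P_0$ and then to $0$), one checks that the $kG$-linearity imposes no further constraint beyond $kH$-linearity on the component landing in the socle-type piece; more precisely any $kH$-linear map $\bigoplus Q_n\to k$ that kills $\Rad$ automatically respects the $Z$-action onto $k$. Thus $\Hom_{kG}(\CE_V,k)\cong\prod_{n\geq 0}\Hom_{kH}(Q_n,k)$, and because the maps $\partial$ land in $\Rad_{kH}(P_{i-1})$ (minimality), $\Hom_{kH}(P_n\otimes U,k)\cong\Hom_{kH}(P_n,k)$ via evaluation against the top basis vector $1$ of $U$, so the product collapses to $\prod_{n\geq 0}\Hom_{kH}(P_n,k)\cong\prod_{n\geq 0}\HHH^n(H,k)$. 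I would also note that $\PHom_{kG}(\CE_V,k)=0$: since $\CE_V$ has no projective summand and any map through a projective factors appropriately, the stable and non-stable Hom agree here, which is why $\Homul$ may be used interchangeably.

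Next I would invoke the triangle $\CS_V$. Applying the cohomological functor $\Homul_{kG}(\CE_V,-)$ to
\[
\xymatrix{ {} \ar[r] & \CE_V \ar[r]^{\sigma_V} & k \ar[r]^{\tau_V} & \CF_V \ar[r] & \Omega(\CE_V) \ar[r] & {} }
\]
gives a long exact sequence, and the key vanishing is $\Homul_{kG}(\CE_V,\CF_V)=0=\Homul_{kG}(\CE_V,\Omega^{-1}\CF_V)$. This holds because $\CE_V\otimes\CF_V\cong 0$ in the stable category and $\CE_V$ acts as the identity on the colocalizing subcategory it generates: any map $\CE_V\to\CF_V$ factors through $\CE_V\otimes\CE_V\to\CE_V\otimes\CF_V\cong 0$ using the idempotent structure and the universal property of $\sigma_V$ recorded in Section~2. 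Hence $(\sigma_V)_*$ is an isomorphism $\Endul_{kG}(\CE_V)\xrightarrow{\ \sim\ }\Homul_{kG}(\CE_V,k)$, and combining with the previous paragraph yields $\Endul_{kG}(\CE_V)\cong\prod_{i=0}^\infty\HHH^i(H,k)$ as additive groups, which is exactly the assertion of Proposition~\ref{prop:endoE1}.

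I expect the main obstacle to be the first step, namely verifying that $kG$-linearity of a map $\CE_V\to k$ imposes no constraints beyond $kH$-linearity, so that the $Z$-action formula of Theorem~\ref{thm:strEV} can genuinely be ignored when computing $\Hom$ into the trivial module. One has to track where each $Z^j$-translate sits in the grading and confirm that a $kH$-map to $k$ (which necessarily vanishes on all of $\Rad(kG)\cdot\CE_V$, hence on the images of $\partial$ and of multiplication by $Z$) is automatically $kG$-linear; the point is that $k$ is annihilated by the augmentation ideal, so compatibility with $Z$ is vacuous. Once this bookkeeping is done, the collapse of $\prod_n\Hom_{kH}(Q_n,k)$ to $\prod_n\HHH^n(H,k)$ is immediate from minimality of $(P_*,\varepsilon)$, and the triangle argument is formal. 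I would present the proof in this order: (i) $kH$-projectivity of the summands $Q_n$ and the identification $\Hom_{kG}(\CE_V,k)\cong\prod\HHH^n(H,k)$; (ii) the vanishing $\Homul_{kG}(\CE_V,\CF_V)=0$ and hence the isomorphism induced by $\sigma_V$; (iii) assembling the two to conclude. The statement is only about additive group structure, so no compatibility of the product decomposition with multiplication need be addressed here.
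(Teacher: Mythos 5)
Your overall route is exactly the paper's: compute $\Homul_{kG}(\CE_V,k)$ from the explicit model in Theorem \ref{thm:strEV}, then apply $\Homul_{kG}(\CE_V,-)$ to the triangle $\CS_V$ and use the orthogonality $\Homul_{kG}(\CE_V,\Omega^{i}\CF_V)=0$ (which follows, as you say, from $\CE_V\otimes\CF_V\cong 0$ and the fact that $\sigma_V\otimes\Id_{\CE_V}$ is an isomorphism) to see that $(\sigma_V)_*$ is an isomorphism. That second half is fine. The trouble is in the first half, precisely the step you flag as the main obstacle, and as written it fails for $p>2$. It is not true that $kG$-linearity of a map $\CE_V\to k$ imposes nothing beyond $kH$-linearity: a $kH$-linear map to $k$ kills $\Rad(kH)\CE_V$, but it need not kill $Z\CE_V$, and $Z\CE_V\not\subseteq\Rad(kH)\CE_V$ --- for $n$ odd, $Z\CE_V$ contains every $x\otimes Z^{j}$ with $1\le j\le p-2$, and since $U$ has trivial $H$-action these vectors are not in $\Rad(kH)(P_n\otimes U)=\Rad_{kH}(P_n)\otimes U$. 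Consequently your intermediate identification $\Hom_{kG}(\CE_V,k)\cong\prod_{n}\Hom_{kH}(Q_n,k)$ is wrong (the right side is too big by a factor of $p-1$ in each odd factor), and the purported collapse $\Hom_{kH}(P_n\otimes U,k)\cong\Hom_{kH}(P_n,k)$ ``by minimality'' is also false: $\Hom_{kH}(P_n\otimes U,k)\cong\Hom_{kH}(P_n,k)^{\,p-1}$, and minimality of $(P_*,\varepsilon)$ has nothing to do with collapsing the $U$-factor. Indeed the two halves of your bookkeeping contradict each other: if $kG$-linearity really added no constraint, the answer would be $\prod_n\Hom_{kH}(Q_n,k)$, not $\prod_n\HHH^{n}(H,k)$.

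The correct accounting, which is what the paper's terse argument encodes, is this: since $k$ is trivial, $\Hom_{kG}(\CE_V,k)\cong\Hom_k\bigl(\CE_V/\Rad(kG)\CE_V,\,k\bigr)$ with $\Rad(kG)\CE_V=\Rad(kH)\CE_V+Z\CE_V$. The $Z$-action formula shows that $Z\CE_V$ contributes exactly the subspaces $P_n\otimes Z^{j}$ ($j\ge 1$, $n$ odd) --- this is the genuine extra constraint beyond $kH$-linearity --- together with boundary terms $\partial(P_n)$, and minimality, i.e.\ $\partial(P_i)\subseteq\Rad_{kH}(P_{i-1})$, is used precisely to see that those boundary terms are already inside $\Rad(kH)\CE_V$ and impose nothing new. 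Hence $\CE_V/\Rad(kG)\CE_V\cong\bigoplus_n P_n/\Rad_{kH}(P_n)$ and $\Hom_{kG}(\CE_V,k)\cong\prod_n\Hom_{kH}(P_n,k)\cong\prod_n\HHH^{n}(H,k)$, minimality being used a second time for the last identification. Your remark that $\PHom_{kG}(\CE_V,k)=0$, so that stable and ordinary Hom agree, is correct in substance (the model of $\CE_V$ has no free summand; for instance $\Soc(kG)\cdot\CE_V=0$, again by minimality), though ``$\CE_V$ has no projective summand'' is itself asserted rather than proved --- a point the paper also leaves implicit. With the first step repaired as above, your proof coincides with the paper's.
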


Any element $\gamma \in \HHH^d(H, k)$ is represented by a unique 
cocycle $\gamma: P_d \to k$, which in turn, extends to a chain map 
$\{\gamma_i\}: P_* \to P_*$ that is unique up to homotopy. Here 
$\gamma_i$ maps $P_{d+i}$ to $P_i$. The product on the cohomology 
ring $\HHH^*(H,k)$ can be taken to be the homotopy class of
the composition of these chain maps. Moreover, as in the proof of 
\cite[Theorem 7.1]{Cendo1} such a chain map induces an endomorphism
$\hat{\gamma}: \CE_V \to \CE_V$. 

In the case that the degree $d$ of $\gamma$ is even
or that $p=2$, the map $\hat{\gamma}$ is 
given on the $kH$-summands as follows. For $n$ odd, and $x \in P_{n+d}$, 
\[
\hat{\gamma}_n(x \otimes Z^j) = \gamma_n(x) \otimes Z^j,
\]
while for $n$ even, $\hat{\gamma}_n: P_{n+d} \to P_n$ by
\[
\hat{\gamma_n}(x) = \gamma_n(x).
\]
The formula is somewhat more complicated when $d$ is an odd 
integer and $p>2$. In that case,  for $n$ odd, 
$\hat{\gamma}_n: P_{n+d} \to P_n \otimes U$ is given by  
$\hat{\gamma_n}(x) = \gamma_n(x) \otimes Z^{p-2}$. For $n$ even, 
define $\hat{\gamma}_n: P_{n+d} \to P_n \oplus (P_{n-1} \otimes U)$ 
(assuming that $P_{-1} = \{0\}$) by 
\[
\hat{\gamma}_n(x \otimes Z^j) = \begin{cases} 
(\gamma_n(x), 0) & \text{ if } j = 0 \\
(0, \partial\gamma_n(x) \otimes Z^{j-1}) & \text{ if } j > 0
\end{cases}
\]
Notice that in all cases, $\hat{\gamma}$ is a $kH$-homomorphism since $\CE_V$
is free as a $kH$-module. To prove that it is a $kG$-homomorphism, it is 
only necessary to show that $\hat{\gamma}$ commutes with the action of $Z$.
We leave this as an exercise for the reader. 

Taking compositions, we get the following.

\begin{lemma}  \label{ref:endoprods} 
Suppose that the characteristic $p$ is odd. 
Let $\hat{}: \HHH^*(H,k) \to \Endul_{kG}(\CE_V)$ be the map that sends the 
cohomology element to the endomorphism as described above. Then we have
for $\gamma \in \HHH^n(G,k)$ and $\mu \in \HHH^m(G,k)$, the product is given 
as follows. 
\begin{enumerate}
\item If either $n$ or $m$ is even, the $\hat{\gamma}\hat{\mu} = 
\widehat{\gamma\mu}$.
\item if both $n$ and $m$ are odd, then $\hat{\gamma}\hat{\mu} = 0$.
\end{enumerate}
\end{lemma}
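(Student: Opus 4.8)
The plan is to verify the two product formulas directly from the explicit descriptions of $\hat{\gamma}$ and $\hat{\mu}$ given just above the lemma, using the fact that $\widehat{(\,\cdot\,)}$ is built from chain maps on the minimal resolution $P_*$. Fix $\gamma \in \HHH^n(H,k)$ and $\mu \in \HHH^m(H,k)$ with chosen chain lifts $\{\gamma_i\}$ and $\{\mu_i\}$. The composite chain map $\{(\gamma\mu)_i\} = \{\gamma_i \circ \mu_{i+n}\}$ represents $\gamma\mu \in \HHH^{n+m}(H,k)$, so $\widehat{\gamma\mu}$ is the endomorphism of $\CE_V$ attached to this composite by the recipe preceding the lemma. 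The whole lemma then reduces to comparing, summand by summand on $(\CE_V)_{\downarrow H} \cong P_0 \oplus (P_1\otimes U) \oplus P_2 \oplus \cdots$, the $kH$-map $\hat{\gamma}\circ\hat{\mu}$ with the $kH$-map built from $\{\gamma_i\mu_{i+n}\}$; since everything in sight is $kH$-linear and $\CE_V$ is $kH$-free, it suffices to check agreement on the generators $x$ and $x\otimes Z^j$ with $x \in P_\ell$.

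\textbf{Step 1: the case $n$ even (or symmetrically $m$ even).} Here $\hat{\mu}$ is the ``simple'' endomorphism that acts as $\mu_\ell$ on each $P_\ell$-summand and as $\mu_\ell \otimes \id_U$ on each $(P_\ell \otimes U)$-summand (for $\ell$ odd), without mixing summands. Precomposing $\hat{\gamma}$ with such a map, on a generator $x$ with $x \in P_{\ell + n + m}$, simply replaces the inner ``$\gamma_\ell(\,\cdot\,)$'' by ``$\gamma_\ell(\mu_{\ell+n}(\,\cdot\,))$'' in every clause of the formula for $\hat{\gamma}$, whether $m$ is even or odd; the tensor-with-$Z^{p-2}$ clauses and the $\partial\gamma_\ell(\,\cdot\,)\otimes Z^{j-1}$ clauses are unaffected by inserting $\mu_{\ell+n}$ on the inside, because $\mu$ being even means $\hat\mu$ commutes with the $\partial\otimes\id$ bookkeeping. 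This is exactly the formula for $\widehat{\gamma\mu}$ with chain lift $\{\gamma_i\mu_{i+n}\}$, giving (1). The symmetric subcase $m$ even is identical with the roles reversed, using that $\hat\gamma$ is then the simple one and postcomposition by $\hat\mu$ inserts $\gamma$ on the outside.

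\textbf{Step 2: the case $n$ and $m$ both odd.} Now examine $\hat{\gamma}\circ\hat{\mu}$ on a generator $x \in P_{\ell+n+m}$. Note $\ell$ and $\ell+n+m$ have the same parity since $n+m$ is even, so I track parity of $\ell$. When $\ell$ is even: $\hat{\mu}_\ell(x)$ lands in $P_\ell \oplus (P_{\ell-1}\otimes U)$ with components $(\mu_\ell(x),\,0)$ in degree shift—wait, more precisely $\hat\mu$ sends the generator $x$ (viewed with $j=0$) to $(\mu_\ell(x), 0) \in P_\ell \oplus (P_{\ell-1}\otimes U)$. Applying $\hat{\gamma}$ to $\mu_\ell(x) \in P_\ell$ with $\ell$ even and $\gamma$ odd produces $(\gamma_\ell\mu_\ell(x), 0) \in P_\ell \oplus (P_{\ell-1}\otimes U)$ from the $j=0$ clause—so far nonzero. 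But I must be careful: the indices must match so that $\hat\gamma$ is applied to an element of the summand $P_{\ell+?}$ it expects; I will line up the degree shifts ($\hat\mu$ shifts $P_{\ell+m} \to P_\ell$-ish, $\hat\gamma$ shifts $P_{\ell+n} \to P_\ell$-ish) carefully and find that the two ``odd'' twists compose to give a factor landing in a summand $P_\ell \otimes U$ followed by a map that sees it as ``$Z^{p-2}$-component'', and the $\ell$-even clause of $\hat\gamma$ applied to the $U$-component $\partial\mu_\ell(x)\otimes Z^{p-2-1}\cdots$ forces, after using $\partial^2 = 0$ on the minimal resolution, the output to vanish. The cleanest way to organize this is: $\hat\gamma$ odd maps the $P$-part into the $U$-part via a single $Z^{p-2}$, and $\hat\gamma$ on an incoming $U$-part uses $\partial$; composing two odd twists therefore always routes through a ``$\partial$ then into $U$ then $\partial$'' or ``$Z^{p-2}$ on something already maximal in $Z$'' pattern, both of which are zero. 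I expect to show each surviving term contains a factor $\partial\circ\partial$ or $Z\cdot Z^{p-2} = Z^{p-1} = 0$ in $U$, hence $\hat\gamma\hat\mu = 0$, which is (2). (Note $\widehat{\gamma\mu}$ need not itself be zero as an element of $\Endul_{kG}(\CE_V)$—the content of (2) is that this particular composite of the $\widehat{(\,\cdot\,)}$-lifts vanishes on the nose, not merely up to the kernel of $\widehat{(\,\cdot\,)}$.)

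\textbf{Main obstacle.} The routine-looking but genuinely delicate point is Step 2: bookkeeping the degree shifts so that $\hat\gamma$ and $\hat\mu$ are each applied to the correct graded summand, and then verifying that every term in the composite is killed either by $\partial^2 = 0$ (using minimality of $P_*$) or by $Z^{p-1} = 0$ on $U$. The odd-degree formula for $\hat\gamma$ mixes the $P_n$ and $P_{n-1}\otimes U$ summands with a $\partial$, so the composite of two such maps naturally produces expressions with iterated $\partial$'s, and I will need the hypothesis that $p$ is odd precisely to ensure the sign conventions in the chain-map lifts do not disturb this cancellation and that $U$ has dimension $p-1 \geq 2$ so the $Z^j$-indexing in the formulas is non-degenerate. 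The even case (Step 1) I expect to be entirely mechanical once the summand-preservation of the ``simple'' $\hat{(\,\cdot\,)}$ is observed.
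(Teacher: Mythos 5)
Your Step 1 matches the paper's treatment of item (1) (choose chain lifts and compare the formulas term by term), but Step 2 contains a genuine gap: the composite $\hat{\gamma}\hat{\mu}$ of two odd-degree lifts is \emph{not} zero as an actual map of modules, so no amount of bookkeeping with $\partial^2=0$ or $Z^{p-1}=0$ can make it vanish ``on the nose.'' Concretely, take a summand $P_\ell\otimes U$ with $\ell$ odd and a generator $x\otimes 1$ (i.e.\ $j=0$). The odd formula for $\hat{\mu}$ sends it to $(\mu_{\ell-m}(x),0)\in P_{\ell-m}\oplus(P_{\ell-m-1}\otimes U)$, and then the odd formula for $\hat{\gamma}$ on the even-indexed summand $P_{\ell-m}$ produces $\gamma_{\ell-m-n}\mu_{\ell-m}(x)\otimes Z^{p-2}$. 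No boundary map and no power $Z^{p-1}$ occurs in this term, and it is nonzero in general: take $\gamma=\eta_1$, $\mu=\eta_2$ two distinct degree-one exterior generators, so that the composite chain map represents $\eta_1\eta_2\neq 0$ and is not even null-homotopic. Similarly, on a summand $P_\ell$ with $\ell$ even one gets $\partial\gamma_{\ell-m-n}\mu_{\ell-m}(x)\otimes Z^{p-3}$, which equals $\gamma_{\ell-m-n-1}\mu_{\ell-m-1}\partial(x)\otimes Z^{p-3}$ and again has no reason to vanish identically. Your parenthetical remark that the content of (2) is on-the-nose vanishing of the composite is therefore exactly where the argument breaks.

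What the computation above does show is that the image of $\hat{\gamma}\hat{\mu}$ is contained in $Z^{p-2}\CE_V$ (elements of the form $y\otimes Z^{p-2}$ and $\partial y\otimes Z^{p-3}$), and this is the paper's route: since $p>2$, $Z^{p-2}\CE_V$ lies in the kernel of $\sigma_V\colon\CE_V\to k$, so $\sigma_V\circ\hat{\gamma}\hat{\mu}=0$; because applying $\Homul_{kG}(\CE_V,-)$ to the triangle $\CS_V$ shows that $(\sigma_V)_*\colon\Endul_{kG}(\CE_V)\to\Homul_{kG}(\CE_V,k)$ is an isomorphism, it follows that the class of $\hat{\gamma}\hat{\mu}$ is zero in the \emph{stable} category, which is all the lemma asserts. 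So the missing idea in your proposal is precisely this passage through $\sigma_V$ and the idempotent triangle; without it, the odd--odd case cannot be completed by direct cancellation. (This is also the real place where the hypothesis $p$ odd enters: for $p=2$ one has $Z^{p-2}=1$ and the containment in $\ker\sigma_V$ fails, rather than any issue with signs or with $\dim U\geq 2$.)
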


\begin{proof}
The proof of the first item is a matter of choosing chain maps representing 
the two elements and using the formula for the composition as above. 
For the second, we note that the 
image of the composition of the two chain maps is 
contained in $Z^{p-2} \CE_V$. But this is in the kernel of the map 
$\sigma_V: \CE_V \to k$. Because $\sigma_V^*: \Homul_{kG}(\CE_V, \CE_V) \to 
\Homul_{kG}(\CE_V, k)$ is an isomorphism we conclude that
$\hat{\gamma}\hat{\mu} = 0$.
\end{proof}

In this way we can characterize $\Endul_{kG}(\CE_V)$. 
We must first define a variant of the cohomology ring. 
Assume that $p >2$. Recall that 
\[
\HHH^*(H,k) \cong k[\zeta_1, \dots, \zeta_{r-1}] \otimes 
\Lambda(\eta_1, \dots, \eta_{r-1})
\]
where $\Lambda$ is the exterior algebra generated by the degree one 
elements $\eta_i$ and the elements $\zeta_i$ have degree 2. Let 
$R \subset \Lambda$ be the subalgebra spanned by the elements of even degree. 
Let $\Theta = R[\eta_1^\prime, \dots, \eta_{r-1}^\prime]/J$
where $J$ is the ideal generated by all products 
$\eta_i^\prime \eta_j^\prime$ and all monomials $\eta_i^\prime x$ 
such that $x \in R$ and $\eta_i x =0$ in $\Lambda$. Notice that 
$\Theta$ is a graded $k$-algebra and as $k$-vector spaces, 
$\Theta \cong \Lambda$. Let $\Gamma = 
k[\zeta_1, \dots, \zeta_{r-1}] \otimes \Theta$. Note that $\Gamma$ 
is a commutative $k$-algebra, and that we have an isomorphism
of $k$-vector spaces $\varphi: \HHH^*(G,k) \to \Gamma$ that is 
the identity on $k[\zeta_1, \dots, \zeta_{r-1}]$ and on $R$ 
and sends $\eta_i$ to $\eta_i^\prime$. 

In the case that $p=2$, let $\Gamma = \HHH^*(G,k)$ and let $\varphi$
be the identity. Then we have the following. 

\begin{prop} \label{prop:endoE2}
If $\alpha = (\alpha_1, \alpha_2, \dots)$ and 
$\beta = (\beta_1, \beta_2, \dots)$ are elements of $\Endul_{kG}(\CE_V)$
as in Proposition \ref{prop:endoE1} with $\alpha_i, \beta_i \in 
\HHH^i(H,k)$, then the product $\alpha\beta = \gamma = (\gamma_1,
\gamma_2, \dots)$ is given by
the relation 
\[
\gamma_n = \sum_{i=0}^n 
\varphi^{-1}(\varphi(\alpha_i)\varphi(\beta_{n-i})).
\]
That is, $\Endul_{kG}(\CE_V) \cong \prod_{i = 0}^\infty \Gamma_i$.
\end{prop}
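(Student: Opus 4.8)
The plan is to reduce Proposition~\ref{prop:endoE2} to the product formula already recorded in Lemma~\ref{ref:endoprods}, combined with the additive identification of $\Endul_{kG}(\CE_V)$ from Proposition~\ref{prop:endoE1}. First I would fix the additive isomorphism $\Endul_{kG}(\CE_V)\cong\prod_{i\geq 0}\HHH^i(H,k)$ and observe that, since the map $\hat{}\,\colon\HHH^*(H,k)\to\Endul_{kG}(\CE_V)$ is additive and an element $\alpha=(\alpha_0,\alpha_1,\dots)$ is the (infinite) sum of its homogeneous pieces $\hat{\alpha_i}$, bilinearity of composition reduces the computation of $\alpha\beta$ to the homogeneous products $\hat{\alpha_i}\,\hat{\beta_{n-i}}$ landing in degree $n$. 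Concretely, the degree-$n$ component of $\alpha\beta$ is $\sum_{i=0}^n \hat{\alpha_i}\,\hat{\beta_{n-i}}$, so it suffices to identify each $\hat{\alpha_i}\,\hat{\beta_{n-i}}$ as an element of $\HHH^n(H,k)$.

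Next I would invoke Lemma~\ref{ref:endoprods}: for $p$ odd, $\hat{\gamma}\hat{\mu}=\widehat{\gamma\mu}$ whenever at least one of the degrees is even, and $\hat{\gamma}\hat{\mu}=0$ when both degrees are odd. The whole content of the proposition is then the bookkeeping observation that this is exactly the multiplication rule of the algebra $\Gamma=k[\zeta_1,\dots,\zeta_{r-1}]\otimes\Theta$ transported back through the vector-space isomorphism $\varphi\colon\HHH^*(G,k)\to\Gamma$. So I would check that $\varphi$ is multiplicative in precisely the sense needed: a product $\gamma\mu$ in $\HHH^*(H,k)$ with at least one factor of even degree is sent by $\varphi$ to $\varphi(\gamma)\varphi(\mu)$, whereas a product of two odd-degree elements (a monomial $\eta_i\eta_j$ times an even part, which lives in the odd part of $\Lambda$) is killed by the passage to $\Theta$, matching the vanishing in Lemma~\ref{ref:endoprods}(2). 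This comes down to unwinding the definition of $\Theta=R[\eta_1',\dots,\eta_{r-1}']/J$: the generators $\eta_i'$ square to zero and $\eta_i'\eta_j'=0$, and the relations $\eta_i' x=0$ for $\eta_i x=0$ in $\Lambda$ guarantee that $\varphi$ restricted to the subspace $R\oplus\bigoplus_i R\eta_i$ (which is all of $\Lambda$, hence all the "odd-relevant" part of $\HHH^*(H,k)$) is an isomorphism of the relevant multiplicative structure. Putting $\gamma_n=\sum_{i=0}^n\varphi^{-1}(\varphi(\alpha_i)\varphi(\beta_{n-i}))$ then simply re-encodes: when one of $i,n-i$ is even, $\varphi^{-1}(\varphi(\alpha_i)\varphi(\beta_{n-i}))=\alpha_i\beta_{n-i}$ (ordinary cup product), and when both are odd, $\varphi(\alpha_i)\varphi(\beta_{n-i})=0$, matching the two cases of the lemma. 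The case $p=2$ is immediate since then $\Gamma=\HHH^*(G,k)$, $\varphi=\id$, and every degree is "even" for the purposes of the sign rule, so $\gamma_n=\sum\alpha_i\beta_{n-i}$ is just the statement that $\hat{}$ is a ring homomorphism in characteristic $2$ (again from Lemma~\ref{ref:endoprods}(1), whose hypothesis $p$ odd should be read as vacuously handled here by the separate $p=2$ treatment, or checked directly).

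I would then conclude that the bijection $\alpha\mapsto(\varphi(\alpha_0),\varphi(\alpha_1),\dots)$ from $\Endul_{kG}(\CE_V)$ to $\prod_{i\geq 0}\Gamma_i$ is a ring isomorphism, where the target is given the Cauchy-product multiplication $(\gamma_n)=\bigl(\sum_{i=0}^n a_i b_{n-i}\bigr)$; associativity and commutativity on the target side are inherited from commutativity and associativity of $\Gamma$ (already noted in the paragraph preceding the proposition), which is consistent with Krause's result that $\Endul_{kG}(\CE_V)$ is commutative. The main obstacle, such as it is, is not conceptual but notational: one must be careful that the product in $\prod_{i}\Gamma_i$ is a genuine graded/completed multiplication (each $\gamma_n$ is a finite sum, so there is no convergence issue), and one must verify that Lemma~\ref{ref:endoprods} really does capture all homogeneous products—in particular that the description of $\hat\gamma$ for odd $d$ feeds correctly into the composition computation so that the image lands in $Z^{p-2}\CE_V\subset\Ker\sigma_V$, giving the asserted vanishing. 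Once that is in hand, everything else is a transcription of the multiplication table of $\Gamma$, so I expect the proof to be short.
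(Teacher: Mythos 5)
Your proposal is correct and takes essentially the same route as the paper, which states Proposition \ref{prop:endoE2} without a separate proof, treating it as an immediate consequence of the additive identification in Proposition \ref{prop:endoE1}, the chain-map construction of $\hat{\gamma}$, and the product rules of Lemma \ref{ref:endoprods}, with $\Gamma$ defined precisely so that its multiplication encodes those rules. Your homogeneous decomposition, the convolution bookkeeping (each $\gamma_n$ a finite sum), and the separate $p=2$ observation are exactly that implicit reduction made explicit.
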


\begin{rem} \label{rem:endoEM}
One of the features that makes the construction of $\CE_V$ and its 
endomorphism ring (as above) possible is that $kC$ acts trivially on 
the trivial module $k$. A similar thing can be done for any $kG$ module
$M$ that is inflated from a $kH$-module, so that $Z$ annihilates $M$. 
That is, we can construct $\CE_V(M) \cong \CE_V \otimes M$ from a minimal
projective $kH$-resolution of $M_{\downarrow H}$ with the action of 
$Z$ given by formulas as above. The final result 
is that $\Endul_{kG}(\CE_V(M))$ is, as a $k$-vector space,
the direct product $\prod_{i = 0}^\infty \Ext_{kH}^i(M,M)$.
The computation of the product poses similar problems as above, though
products of even degree elements coincide with the cohomology products. 
\end{rem}


\section{Finite generation} \label{sec:fingen}
Assume that $kG$ is a unipotent commutative group scheme. 
In this section, we show that if the restriction 
of a module $M$ along a $\pi$-point is 
a compact object in the stable category, then 
$\Homul_{kG}(\CE_V,M)$
is finitely generated as a module over the endomorphism ring of the 
unit object $\CE_V$ in the colocalized category. This leads to definition of a
local support variety for such a module. 
In the proof, we assume that the $\pi$-point is a map of Hopf algebras. 
This can be done as in Lemma \ref{lem:extendHopf}. However, in the 
end, the consequences of the analysis do not depend on the Hopf algebra 
structure. Hence the consequences hold without the assumption. 
Note that this is not an issue in the case that 
$kG$ is the restricted enveloping algebra
of a commutative Lie algebra with trivial $p^{th}$-power operation,
 as we saw in 
Example \ref{ex:restr-envel}. The proof also uses the detailed
knowledge of the structure of the idempotent module $\CE_V$.

We first note a general principle.

\begin{lemma} \label{lem:gen1}
Suppose that $V$ is a closed subvariety of $V_G(k)$. For any $kG$-module 
$M$, $\Homul_{kG}(\CE_V, M) \cong \Homul_{kG}(\CE_V, \CE_V \otimes M)$
as modules over $\Endul_{kG}(\CE_V)$.
\end{lemma}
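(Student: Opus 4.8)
The plan is to exploit the universal property of the Rickard triangle $\CS_V$ together with the idempotency of $\CE_V$. First I would tensor the distinguished triangle $\CS_V$ with $M$ to obtain the triangle
\[
\xymatrix{
{} \ar[r] & \CE_V \otimes M \ar[r]^{\sigma_V \otimes \Id_M} & M \ar[r]^{\tau_V \otimes \Id_M \quad} & \CF_V \otimes M \ar[r] & \Omega(\CE_V \otimes M) \ar[r] & {}
}
\]
in $\Stmodg$, and then apply the functor $\Homul_{kG}(\CE_V, -)$ to it. This yields a long exact sequence of $\Endul_{kG}(\CE_V)$-modules (the module structure coming from the left variable $\CE_V$, which is unaffected by the tensoring on the right, so the maps in the sequence are $\Endul_{kG}(\CE_V)$-linear). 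The goal is then to show that the map
\[
(\sigma_V \otimes \Id_M)_* \colon \Homul_{kG}(\CE_V, \CE_V \otimes M) \lra \Homul_{kG}(\CE_V, M)
\]
is an isomorphism of $\Endul_{kG}(\CE_V)$-modules.

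The key step is to verify that $\Homul_{kG}(\CE_V, \CF_V \otimes M) = 0$, from which exactness forces $(\sigma_V \otimes \Id_M)_*$ to be an isomorphism (one also needs the analogous vanishing one step over, $\Homul_{kG}(\CE_V, \Omega(\CF_V \otimes M)) = 0$, but $\Omega$ preserves the relevant subcategory so this is the same statement shifted). To see the vanishing, recall from the universal properties of $\CS_V$ recorded in the excerpt that any map from an object whose cone lies in $\CM_V$ factors through $\tau_V$; dually, $\CF_V$ lies in the localizing subcategory orthogonal to the colocalizing subcategory generated by $\CE_V$, so that $\CE_V \otimes \CF_V \cong 0$ in the stable category. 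Concretely, $\Homul_{kG}(\CE_V, \CF_V \otimes M) \cong \Homul_{kG}(\CE_V \otimes \CE_V, \CF_V \otimes M)$ (using $\CE_V \otimes \CE_V \cong \CE_V$), and a standard adjunction/tensor-hom manipulation together with $\CE_V \otimes \CF_V \cong 0$ gives that this group vanishes. Alternatively one invokes directly that $\CF_V$ is in the kernel of the colocalizing functor $\CE_V \otimes -$, which is precisely the content of the orthogonality of the two idempotents.

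Finally, I would check that the identification is natural and $\Endul_{kG}(\CE_V)$-linear, so that it is an isomorphism of modules and not merely of abelian groups. This is automatic: the map $(\sigma_V \otimes \Id_M)_*$ is post-composition with a fixed morphism, hence commutes with pre-composition by elements of $\Endul_{kG}(\CE_V)$ acting on the source $\CE_V$. The main obstacle is really just bookkeeping — making sure the $\CE_V$-module structures on the two $\Hom$-groups genuinely correspond under the isomorphism, and that the vanishing $\Homul_{kG}(\CE_V, \CF_V \otimes M) = 0$ is applied with the correct variance; there is no hard analysis here, as the whole argument is formal consequences of the triangle $\CS_V$ and the idempotent relations $\CE_V \otimes \CE_V \cong \CE_V$ and $\CE_V \otimes \CF_V \cong 0$.
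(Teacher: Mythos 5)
Your argument is essentially the paper's own proof: tensor the triangle $\CS_V$ with $M$, apply $\Homul_{kG}(\CE_V,-)$, and use the vanishing of $\Homul_{kG}(\CE_V,\CF_V\otimes M)$ together with its shift $\Homul_{kG}(\CE_V,\Omega(\CF_V)\otimes M)$ to conclude that $(\sigma_V\otimes\Id_M)_*$ is an isomorphism of $\Endul_{kG}(\CE_V)$-modules. The only caution is that the vanishing is better justified by your alternative route (orthogonality of the localizing subcategory generated by $\CM_V$ against the $\CM_V$-local objects, of which $\CF_V\otimes M$ is one) rather than the tensor--hom manipulation, since $\CE_V$ is not dualizable; the paper itself simply asserts these vanishings.
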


\begin{proof}
This follows from the distinguished triangle for $M$ associated to the 
tensor ideal $\CM_V$:
\[
\xymatrix{
\dots \ar[r] & \Omega(\CF_V) \otimes M \ar[r] &
\CE_V \otimes M \ar[r] & k \otimes M \ar[r] & 
\CF_V \otimes M \ar[r] & \dots
}
\]
in that $\Homul_{kG}(\CE_V, \Omega(\CF_V) \otimes M) = 0 = 
\Homul_{kG}(\CE_V, \CF_V \otimes M)$.  
\end{proof}

Let $kG = k[t_1, \dots, t_r]/(t_1^p, \dots, t_r^p)$.
We assume the notation of Theorem \ref{thm:strEV}.
In particular, we let $\alpha:A = k[t]/(t^p) \to kG$ be a $\pi$-point 
that is a map of Hopf algebras. Let $kC$ be the image of $\alpha$
with $Z = \alpha(t)$. We choose a complementary subalgebra $kH$ so 
that $kG \cong kH \otimes kC$. Let $V$ be the variety consisting of the 
closed point represented by $\alpha$.

The module $\CE_V$ has a sequence
of finitely generated submodules 
\[
\CE_0 \subseteq \CE_1 \subseteq \CE_2 \subseteq \dots
\]
where 
\[
(\CE_n)_{\downarrow H} = P_0 \oplus (P_1 \otimes U) \oplus P_2 
\dots \oplus (P_n \otimes W)
\]
for $W=U$ if $n$ is odd and $W=k$ if $n$ is even, exactly
as in Theorem  \ref{thm:strEV}. Let $\iota_n: \CE_n \to \CE_V$ be the 
inclusion.  Suppose that  $M$ is a $kG$-module such that $\alpha^*(M)$ 
is compact in the stable category $\Stmod(A)$. 

\begin{defi} \label{def:degree}
For an element $\zeta \in \Homul_{kG}(\CE_V, M)$, the 
degree of $\zeta$ is the largest integer $d$ such that 
there exist a representative $\zeta^\prime \in \Hom_{kG}(\CE_V, M)$
of the class of $\zeta$ such that $\zeta^\prime\iota_{d-1}= 0$.
The leading term of $\zeta$ is the class in $\Homul_{kG}(\CE_d/\CE_{d-1}, M)$
of the map induced by $\zeta^\prime\iota_{d}$, where $\zeta^\prime$ 
is a representative of the degree-$d$ element $\zeta$ with 
$\zeta^\prime\iota_{d-1}= 0$.
\end{defi}

\begin{lemma} \label{lem:degree}
Suppose that $\zeta: \CE_V \to M$ is a $kG$-homomorphism such that, 
for some $n$, $\zeta(\CE_n) = \{ 0 \}$ and the induced map 
$\hat{\zeta}: \CE_{n+1}/\CE_n \to M$ factors through a $kG$-projective
module. Then there exist $\gamma:\CE_V \to M$ such that 
$(\zeta-\gamma)(\CE_{n+1}) = \{ 0 \}$ and $\gamma$ factors 
through a projective $kG$-module. In particular, the degree of 
$\zeta$ is at least $n+2$. 
\end{lemma}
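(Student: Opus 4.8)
The plan is to produce $\gamma$ as a factorization through a projective module that ``cancels'' the part of $\zeta$ living on $\CE_{n+1}/\CE_n$, and then to check that the corrected map $\zeta-\gamma$ annihilates $\CE_{n+1}$. By hypothesis $\hat\zeta\colon \CE_{n+1}/\CE_n \to M$ factors as $\CE_{n+1}/\CE_n \xrightarrow{a} Q \xrightarrow{b} M$ with $Q$ projective. Since $\CE_V$ is free as a $kH$-module and $kG \cong kH\otimes kC$, the quotient $\CE_{n+1}/\CE_n$ is a summand of $\CE_V/\CE_n$, and more to the point $\CE_V \to \CE_V/\CE_n$ is a $kG$-map. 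I would first compose to get $\CE_{n+1} \to \CE_{n+1}/\CE_n \xrightarrow{a} Q$; the key point is that $Q$ is injective (as $kG$ is self-injective), so this extends along the inclusion $\CE_{n+1} \hookrightarrow \CE_V$ to a map $\tilde a\colon \CE_V \to Q$. Set $\gamma = b\tilde a\colon \CE_V \to M$. Then $\gamma$ factors through the projective module $Q$ by construction.

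\textbf{Verifying the two conclusions.} It remains to check $(\zeta-\gamma)(\CE_{n+1}) = \{0\}$. On $\CE_n$ we have $\zeta(\CE_n)=\{0\}$ by hypothesis, and $\gamma(\CE_n) = b\tilde a(\CE_n)$; here I would choose the extension $\tilde a$ so that $\tilde a|_{\CE_n}$ agrees with the composite $\CE_n \hookrightarrow \CE_{n+1}\to\CE_{n+1}/\CE_n \xrightarrow{a} Q$, which is zero because $\CE_n$ maps to zero in $\CE_{n+1}/\CE_n$ — so $\gamma(\CE_n)=\{0\}$ too. On all of $\CE_{n+1}$, by construction $\gamma|_{\CE_{n+1}} = b\tilde a|_{\CE_{n+1}} = b a \pi = \hat\zeta\pi$ where $\pi\colon \CE_{n+1}\to\CE_{n+1}/\CE_n$ is the quotient map; and $\zeta|_{\CE_{n+1}}$ also factors through $\pi$ as $\hat\zeta\pi$ since $\zeta(\CE_n)=\{0\}$. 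Hence $\zeta|_{\CE_{n+1}} = \gamma|_{\CE_{n+1}}$, i.e. $(\zeta-\gamma)(\CE_{n+1}) = \{0\}$. The final clause about the degree being at least $n+2$ is then immediate from Definition \ref{def:degree}: the representative $\zeta-\gamma$ of the class of $\zeta$ (it represents the same class because $\gamma$ factors through a projective) vanishes on $\CE_{n+1}$, hence on $\iota_{n+1}$, so the degree exceeds $n+1$.

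\textbf{Main obstacle.} The one genuinely delicate point is arranging the injective-hull extension $\tilde a$ carefully enough that it restricts correctly on $\CE_n$, not merely on $\CE_{n+1}/\CE_n$ — i.e. that the diagram
\[
\xymatrix{
\CE_n \ar[r] \ar[d] & \CE_{n+1} \ar[r] \ar[d] & \CE_V \ar[dl]^{\tilde a} \\
0 \ar[r] & Q &
}
\]
commutes on the nose. This is handled by noting that the map $\CE_{n+1}\to Q$ we extend is $a\circ\pi$, which already kills $\CE_n$, so any extension does as well; the self-injectivity of $kG$ supplies the extension and no further compatibility is needed. A secondary, routine check is that $\CE_{n+1}/\CE_n$ (and the maps in sight) are genuinely $kG$-module maps and not just $kH$-maps — this follows from the explicit $Z$-action formulas in Theorem \ref{thm:strEV}, under which each $\CE_n$ is a $kG$-submodule, so all the quotients and inclusions are $kG$-linear. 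I do not expect either point to require substantial computation.
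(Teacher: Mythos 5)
Your argument is correct and is essentially the paper's own proof: both factor the restriction of $\zeta$ to $\CE_{n+1}$ through a projective (via the factorization of $\hat\zeta$ composed with the quotient map), use that the projective is injective since $kG$ is self-injective to extend this map from $\CE_{n+1}$ to all of $\CE_V$, and take $\gamma$ to be the resulting composite into $M$. Your extra worry about the extension's behavior on $\CE_n$ is, as you note yourself, automatic, so nothing is missing.
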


\begin{proof}
First notice that since $\hat{\zeta}$ factors through a projective module, 
then so does $\zeta\iota_{n+1}$.
So, we have a diagram
\[
\xymatrix{
\CE_{n+1} \ar[rr]^{\zeta\iota_{n+1}} \ar[d]_{\iota_{n+1}} \ar[dr]^\beta && M \\
\CE_V \ar@{.>}[r]^{\mu} & P \ar[ur]^{\alpha}
}
\]
where $P$ is projective and $\alpha\beta = \zeta\iota_{n+1}$. The point is that
$P$ is also injective, so that there exists $\mu$ with $\mu\iota_{n+1} 
= \beta$. Then let $\gamma = \alpha\mu$. 
\end{proof}

Using the last lemma it is an easy exercise to prove the following. We leave
this exercise to the reader. 

\begin{prop} \label{prop:degree}
Every element $\zeta$ in $\Homul_{kG}(\CE_V, M)$ has a finite degree and 
well defined leading term. In particular, if $\zeta(\CE_{d-1}) = \{0\}$,
then the degree of $\zeta$ is $d$ if and only if $\zeta\iota_d$ does 
not factor through a projective module.
\end{prop}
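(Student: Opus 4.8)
The plan is to deduce Proposition~\ref{prop:degree} directly from Lemma~\ref{lem:degree} by an iteration argument. First I would establish the existence of a finite degree. Suppose for contradiction that some $\zeta \in \Homul_{kG}(\CE_V, M)$ had no finite degree; since $\CE_0 = P_0$ is projective, every $\zeta$ can be adjusted by a map factoring through a projective so that its representative vanishes on $\CE_0$, so without loss of generality $\zeta\iota_0 = 0$. If the degree were infinite, then for each $n$ the induced map $\CE_{n+1}/\CE_n \to M$ would have to factor through a projective (otherwise, by the criterion we are proving, but which already follows from Lemma~\ref{lem:degree}, the degree would be exactly $n+1$). Applying Lemma~\ref{lem:degree} repeatedly, one produces representatives vanishing on larger and larger $\CE_n$ that differ by maps factoring through projectives; the key point is that $\CE_V$ is the union of the $\CE_n$, so a representative vanishing on every $\CE_n$ is the zero map, forcing $\zeta = 0$ in the stable category. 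Hence a nonzero $\zeta$ has finite degree, and the zero element trivially has well-defined (infinite or vacuous) degree.

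Next I would verify the stated criterion: if $\zeta(\CE_{d-1}) = \{0\}$, then $\deg \zeta = d$ if and only if $\zeta\iota_d$ does not factor through a projective. One direction is the definition: if $\deg\zeta = d$ then by definition no representative of $\zeta$ vanishes on $\CE_d$, so in particular $\zeta\iota_d$ cannot factor through a projective (if it did, Lemma~\ref{lem:degree} would produce $\gamma$ factoring through a projective with $(\zeta-\gamma)\iota_d = 0$, contradicting $\deg\zeta = d$, as $\zeta - \gamma$ represents the same stable class). Conversely, if $\zeta\iota_d$ does not factor through a projective, then no representative $\zeta'$ with $\zeta'\iota_{d-1} = 0$ can have $\zeta'\iota_d = 0$, so $\deg\zeta \leq d$; combined with $\zeta(\CE_{d-1}) = \{0\}$ giving $\deg\zeta \geq d$, we get $\deg\zeta = d$ exactly.

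Finally, for well-definedness of the leading term, I would observe that any two representatives $\zeta_1', \zeta_2'$ of $\zeta$ with $\zeta_i'\iota_{d-1} = 0$ differ by a map $\delta$ factoring through a projective with $\delta\iota_{d-1} = 0$; I need to show the induced maps $\CE_d/\CE_{d-1} \to M$ agree in $\Homul_{kG}(\CE_d/\CE_{d-1}, M)$, i.e.\ that the induced map from $\delta$ factors through a projective. Since $\delta = \alpha\mu$ with $\mu: \CE_V \to P$ and $\delta\iota_{d-1} = 0$, one uses injectivity of $P$ (as in the proof of Lemma~\ref{lem:degree}) to arrange $\mu\iota_{d-1} = 0$, so $\mu$ induces $\bar\mu: \CE_d/\CE_{d-1} \to P$ and the induced map of $\delta$ is $\alpha\bar\mu$, which factors through $P$. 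The main obstacle, such as it is, is the bookkeeping in the infinite iteration: one must be careful that the successive corrections $\gamma$ from Lemma~\ref{lem:degree} can be chosen compatibly or, more cleanly, that one only needs them to conclude $\zeta$ is stably zero by exhausting $\CE_V = \bigcup_n \CE_n$ — so the argument is really just a limiting argument and no convergence of maps into $M$ is needed, only that $\zeta$ restricted to each finitely generated submodule is stably trivial, which by compactness of the $\CE_n$ suffices to kill $\zeta$ on all of $\CE_V$.
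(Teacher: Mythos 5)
The ``in particular'' criterion is handled correctly in your proposal (the extension-by-injectivity argument from the proof of Lemma \ref{lem:degree} gives both directions; note only that the hypothesis of that lemma is about the induced map on $\CE_{d}/\CE_{d-1}$, while what you use is the weaker statement about $\zeta\iota_d$ itself, which the lemma's proof does cover). The two other assertions of the Proposition, however, are exactly where your argument has genuine gaps.

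Finiteness of the degree: your reduction is that if $\zeta\iota_n$ is stably trivial for every $n$ then $\zeta=0$ in $\Homul_{kG}(\CE_V,M)$, ``by compactness of the $\CE_n$.'' That implication is precisely the assertion that there are no nonzero phantom maps out of $\CE_V=\bigcup_n\CE_n$, and compactness of the pieces gives nothing of the sort: a map out of a filtered union that is stably trivial on each finitely generated submodule is by definition phantom, and such maps are governed by a $\varprojlim^1$ term in the Milnor sequence for the homotopy colimit of the $\CE_n$, which is nonzero in $\Stmodg$ for general infinite-dimensional targets. (Your iteration produces a different representative vanishing on each $\CE_n$; there is no reason a single representative vanishes on all of them, and that is the whole difficulty.) This is exactly where the standing hypothesis that $\alpha^*(M)$ is compact --- which your proof never invokes --- must enter: it makes $\Gamma(M)$ and $\Gamma(M\otimes U)$ finite dimensional, hence by Lemma \ref{lem:hom1} and the filtration each $\Homul_{kG}(\CE_n,\Omega M)$ is finite dimensional, so the tower satisfies Mittag--Leffler, the $\varprojlim^1$ term vanishes, and only then does ``stably trivial on every $\CE_n$'' force $\zeta=0$. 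Without an argument of this type the finiteness claim is unproved. (A small related slip: $\CE_0=P_0$ is projective over $kH$ but not over $kG$, so one cannot assume representatives vanish on $\CE_0$ for free; degree-$0$ classes do occur.)

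Well-definedness of the leading term: the step ``use injectivity of $P$ to arrange $\mu\iota_{d-1}=0$'' does not work --- from $\alpha\mu\iota_{d-1}=0$ you cannot modify the factorization so that the map into the projective kills $\CE_{d-1}$. Worse, the statement your argument needs, namely that a map $\delta$ which factors through a projective and vanishes on $\CE_{d-1}$ induces a stably trivial map on $\CE_d/\CE_{d-1}$, is false. Take $p=2$, $kG=k[x,y]/(x^2,y^2)$, $Z=y$, so that by Theorem \ref{thm:strEV} $\CE_V$ has basis $\{e_n,xe_n\}$ with $ye_0=0$ and $ye_{n+1}=xe_n$; let $\mu:\CE_V\to kG$ be the $kG$-map with $\mu(e_0)=y$, $\mu(e_1)=x$, $\mu(e_n)=0$ for $n\ge 2$, and let $\delta$ be its composite with $kG\twoheadrightarrow kG/(y)$. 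Then $\delta$ factors through a projective and $\delta(\CE_0)=0$, yet the induced map $\CE_1/\CE_0\to kG/(y)$ sends the generator to $\bar{x}\notin Z\cdot(kG/(y))$, so by Proposition \ref{prop:factorP} it is not stably trivial. Since two representatives of a degree-$d$ class vanishing on $\CE_{d-1}$ differ by exactly such a $\delta$, well-definedness cannot be a formal consequence of Lemma \ref{lem:degree}; it requires either fixing representatives or reading ``leading term'' modulo differences of this kind (the weaker reading is all that is used later, in the proof of Theorem \ref{thm:fingen}). As written, this part of your proof, like the finiteness part, does not go through.
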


\begin{prop} \label{prop:factorP}
Suppose that $\gamma: \CE_n/\CE_{n-1} \to M$ is a $kG$-homomorphism. 
Then $\gamma$ factors through a projective module if and only if either 
$n$ is odd and $\gamma(\CE_n/\CE_{n-1}) \subset ZM$ or $n$ is even and 
$\gamma(\CE_n/\CE_{n-1}) \subset Z^{p-1}M$. 
\end{prop}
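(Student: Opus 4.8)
The plan is to turn the statement into a direct computation using the explicit description of $\CE_n/\CE_{n-1}$ coming from Theorem~\ref{thm:strEV}, together with the elementary fact that over the commutative local self-injective algebra $kG$ a homomorphism factors through a projective module precisely when it extends along an injective hull.

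First I would identify $\CE_n/\CE_{n-1}$ as a $kG$-module. By Theorem~\ref{thm:strEV}, as a $kH$-module this quotient is $P_n$ when $n$ is even and $P_n\otimes U$ when $n$ is odd, and the induced $Z$-action is read off from the formulas there by discarding the terms $\partial(x)$ and $\partial(x)\otimes 1$, which land inside $\CE_{n-1}$. Choosing free generators of the free $kH$-module $P_n$, this shows that $\CE_n/\CE_{n-1}$ decomposes, over these generators, as a direct sum of cyclic $kG$-modules: for $n$ even $Z$ annihilates each summand, so the annihilator of the generator is $ZkG$ and the summand is isomorphic to $kG/ZkG$; for $n$ odd the generator $x\otimes 1$ has annihilator exactly $Z^{p-1}kG$, so the summand is isomorphic to $kG/Z^{p-1}kG$. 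I would also record the immediate identities $\Ann_{kG}(Z^{p-1})=ZkG$ and $\Ann_{kG}(Z)=Z^{p-1}kG$, both checked on the $kH$-basis $1,Z,\dots,Z^{p-1}$ of $kG$.

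Next I would pin down the relevant injective hulls. Since $kG$ is local, $kG$ itself is its unique indecomposable injective module. From $\Ann_{kG}(Z^{p-1})=ZkG$ the submodule $Z^{p-1}kG\subseteq kG$ is isomorphic to $kG/ZkG$ via $Z^{p-1}\leftrightarrow\bar 1$, and it contains $\Soc(kG)$; hence the embedding $kG/ZkG\hookrightarrow kG$, $\bar 1\mapsto Z^{p-1}$, is an injective hull. Dually $ZkG\cong kG/Z^{p-1}kG$ via $Z\leftrightarrow\bar 1$ and contains $\Soc(kG)$, so $kG/Z^{p-1}kG\hookrightarrow kG$, $\bar 1\mapsto Z$, is an injective hull. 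The general principle to invoke is that over a self-injective algebra a map $\gamma\colon A\to M$ factors through a projective module if and only if it extends along an injective hull $A\hookrightarrow I(A)$: forward because any projective module receiving $\gamma$ is injective, so the map into it extends over $I(A)$; backward because $I(A)$ is itself projective.

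Finally I would assemble the equivalence, reducing to a single cyclic summand, since a map on a direct sum of copies of $A$ factors through a projective if and only if each restriction does, and its total image lies in a submodule $N\subseteq M$ if and only if each generator lands in $N$. For $n$ even take $\gamma\colon kG/ZkG\to M$ and $y=\gamma(\bar 1)$, so $Zy=0$. If $y=Z^{p-1}z$, then the $kG$-map $kG\to M$ sending $1\mapsto z$ restricts on $Z^{p-1}kG\cong kG/ZkG$ to $\gamma$, so $\gamma$ factors through the projective $kG$; conversely, if $\gamma=g h$ with $h\colon kG/ZkG\to F$ free, extending $h$ to $\bar h\colon kG\to F$ along $\bar 1\mapsto Z^{p-1}$ gives $y=g\bar h(Z^{p-1})=Z^{p-1}g\bar h(1)\in Z^{p-1}M$. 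Since $\gamma(\CE_n/\CE_{n-1})=kG\cdot y$ lies in $Z^{p-1}M$ exactly when $y\in Z^{p-1}M$ ($kG$ being commutative), this settles the even case, and the odd case is identical with the roles of $Z$ and $Z^{p-1}$ exchanged, using the embedding $\bar 1\mapsto Z$ and $\Ann_{kG}(Z)=Z^{p-1}kG$. The one real obstacle is the bookkeeping in the first step: correctly extracting from Theorem~\ref{thm:strEV} that the connecting differentials vanish in the quotient $\CE_n/\CE_{n-1}$, so that it is a sum of the small cyclic modules $kG/ZkG$ or $kG/Z^{p-1}kG$, and pairing each with the power of $Z$ whose multiplication realizes its injective hull; once that is in place, the self-injective factoring principle closes the argument in a line.
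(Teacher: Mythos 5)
Your argument is correct and follows essentially the same route as the paper: identify $\CE_n/\CE_{n-1}$ as a direct sum of cyclic modules ($kG/ZkG$ for $n$ even, $kH\otimes U\cong kG/Z^{p-1}kG$ for $n$ odd), recognize the injective hull as the embedding into $kG$ given by multiplication by the complementary power of $Z$, and use self-injectivity of $kG$ to convert ``factors through a projective'' into ``factors through the injective hull.'' The only cosmetic difference is that you write out the even case and exchange roles for the odd case, while the paper does the odd case and calls the even case easier.
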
 

\begin{proof}
We prove the case for $n$ odd. The case for $n$ even is even easier. 
The point is that for $n$ odd, $\CE_n/\CE_{n-1} \cong P_n \otimes U$.
Because $P_n$ is a free $kH$-module, it is a direct sum of copies of 
$kH$ and $Z$ acts trivially on it. Thus, $\CE_n/\CE_{n-1}$ is a direct
sum of copies of $kH \otimes U$. If a map from $kH \otimes U$ factors
through a projective module, then it factors through the injective  hull  
$\psi: kH \otimes U \to kG$, where $\psi(1 \otimes 1) 
= Z$. Thus, if $\gamma$ factors through a projective, then its image
is in $ZM$.  On the other hand, if we have, for some summand
of $\CE_n/\CE_{n-1}$ that is isomorphic to $kH \otimes U$, that 
$\gamma(1\otimes 1) = Zm$, for $m \in M$, then we 
can define $\theta: kG \to M$ by $\theta(1) = m$, and $\theta\psi$
coincides with $\gamma$ on that summand.
\end{proof}

We recall that  $\Endul_{kG}(\CE_V) \cong \prod_{n \geq 0} \HHH^*(H,k)$.
The action of a  nonzero element 
$\gamma \in \HHH^n(G,k) \subseteq \Endul_{kG}(\CE_V)$,
on $\CE_V$ is determined by the chain map $\{\gamma_* \}:P_* \to P_*$ of 
degree $n$. Hence, its degree as an element of $\Endul_{kG}(\CE_V)$
is $n$ and its leading term is the map 
$\nu: \ \CE_n/\CE_{n-1} \to  P_0 \cong \CE_0  \subseteq  \CE_V,$
defined by $\gamma_0$.
(See the discussion following Proposition \ref{prop:endoE1}.)
 
For any $i \geq 0$, there is a shift $\Omega^i(\nu)$ which is the 
induced map 
\[
\xymatrix{
\Omega^i(\nu) = \ \CE_{n+i}/\CE_{n+i-1}  \ar[r] & 
\CE_i/\CE_{i-1}.
}
\]
Note that, $\gamma(\CE_{n+i-1}) \subseteq \CE_{i-1}$, so there is such a map.

\begin{prop} \label{prop:mult1}
Suppose that an element $\zeta \in \Homul_{kG}(\CE_V,M)$ has degree $d$ 
and leading term $\mu$. Let $\gamma \in \Homul_{kG}(\CE_V, \CE_V)$ be 
an element with degree $n$ and leading term $\nu$. 
If $\mu\Omega^d(\nu) \neq 0$ (meaning a representative does not factor
through a projective module), then $\gamma\zeta$ has degree $d+n$
and $\mu\Omega^d(\nu)$ is its leading term. 
Otherwise, it has degree greater than $d+n$. 
\end{prop}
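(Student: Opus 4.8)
The plan is to work with explicit representatives of the stable classes $\zeta$ and $\gamma$, track how the filtration $\CE_0\subseteq\CE_1\subseteq\cdots$ of $\CE_V$ behaves under the composite $\zeta\circ\gamma$, and then feed the outcome into Proposition \ref{prop:degree} and Lemma \ref{lem:degree}. First I would fix a representative $\zeta'\in\Hom_{kG}(\CE_V,M)$ of $\zeta$ with $\zeta'\iota_{d-1}=0$, so that the map $\bar\mu\colon\CE_d/\CE_{d-1}\to M$ induced by $\zeta'\iota_d$ represents the leading term $\mu$ (and, by Proposition \ref{prop:degree}, does not factor through a projective module). Similarly I would take a representative $\gamma'$ of $\gamma$ coming from the chain-map description preceding the proposition, so that $\gamma'\iota_{n-1}=0$ and, crucially, $\gamma'(\CE_{n+i-1})\subseteq\CE_{i-1}$ for every $i\ge1$, with the map $\CE_{n+i}/\CE_{n+i-1}\to\CE_i/\CE_{i-1}$ it induces equal to $\Omega^i(\nu)$; for a general degree-$n$ element of $\Endul_{kG}(\CE_V)$ one writes it as the chain-map endomorphism of its leading cohomology term plus a piece of strictly larger degree, and both pieces respect the filtration in this way. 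The product $\gamma\zeta$ is then represented by $\zeta'\circ\gamma'$, and since $\gamma'(\CE_{n+d-1})\subseteq\CE_{d-1}\subseteq\Ker\zeta'$ we have $(\zeta'\gamma')\iota_{n+d-1}=0$, so the degree of $\gamma\zeta$ is at least $n+d$.

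Next I would identify the candidate leading term. Because $\gamma'(\CE_{n+d})\subseteq\CE_d$, the map $\zeta'\gamma'\iota_{n+d}\colon\CE_{n+d}\to M$ factors as $\CE_{n+d}\xrightarrow{\gamma'}\CE_d\xrightarrow{\zeta'\iota_d}M$, and passing to the quotients by $\CE_{n+d-1}$ and $\CE_{d-1}$ respectively, the map it induces on $\CE_{n+d}/\CE_{n+d-1}$ is exactly the composite $\bar\mu\circ\Omega^d(\nu)$, a representative of $\mu\Omega^d(\nu)$. Now I invoke Proposition \ref{prop:degree}: since $\zeta'\gamma'$ kills $\CE_{n+d-1}$, the degree of $\gamma\zeta$ equals $n+d$ (with leading term the class of $\zeta'\gamma'\iota_{n+d}$, that is $\mu\Omega^d(\nu)$) precisely when $\zeta'\gamma'\iota_{n+d}$ does not factor through a projective, and is strictly larger otherwise. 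The remaining point is that, for a map out of $\CE_{n+d}$ killing $\CE_{n+d-1}$, factoring through a projective $kG$-module is equivalent to the induced map on $\CE_{n+d}/\CE_{n+d-1}$ factoring through a projective: one direction is immediate, and the other is the content of the injectivity-of-projectives argument in the proof of Lemma \ref{lem:degree}, equivalently of the well-definedness of the leading term asserted in Proposition \ref{prop:degree}. Combining these, $\gamma\zeta$ has degree $n+d$ and leading term $\mu\Omega^d(\nu)$ when $\mu\Omega^d(\nu)\neq0$, and degree strictly greater than $n+d$ when $\mu\Omega^d(\nu)=0$.

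The step I expect to be the main obstacle is the bookkeeping behind the two structural facts about $\gamma'$: the inclusion $\gamma'(\CE_{n+i-1})\subseteq\CE_{i-1}$ and the identification of the induced quotient map with $\Omega^d(\nu)$. This uses the explicit formulas for the $Z$-action on $\CE_V$ in Theorem \ref{thm:strEV} together with the formulas for the endomorphism attached to a chain map in the discussion following Proposition \ref{prop:endoE1}, including a check that the correction terms appearing in the odd-degree case land in $\CE_{i-1}$. A secondary point needing care is the equivalence, for maps killing $\CE_{n+d-1}$, between factoring through a projective on $\CE_{n+d}$ and on $\CE_{n+d}/\CE_{n+d-1}$: one should arrange the argument through the already-established well-definedness of the leading term rather than re-deriving it, so as not to quietly reintroduce the hypothesis one is trying to prove.
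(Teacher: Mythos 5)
Your proposal is correct and follows essentially the same route as the paper: choose representatives of $\zeta$ and $\gamma$ killing $\CE_{d-1}$ and $\CE_{n-1}$ respectively, use the filtration shift $\gamma(\CE_{n+i-1})\subseteq\CE_{i-1}$ noted before the proposition to see that the composite kills $\CE_{n+d-1}$ and induces $\mu\Omega^d(\nu)$ on $\CE_{n+d}/\CE_{n+d-1}$, then conclude via Proposition \ref{prop:degree} and Lemma \ref{lem:degree}. The paper's proof is just a terser version of this argument, so no further comparison is needed.
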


\begin{proof}
We have that $\gamma$ has a representative (call it $\gamma$) in 
$\Hom_{kG}(\CE_V, \CE_V)$ such that $\gamma(\CE_{n-1}) = 0$. Likewise, 
$\zeta$ has a representative such that $\zeta(\CE_{d-1}) = 0$. As a 
result, $\gamma\zeta(\CE_{n+d-1}) = 0$ and the composition $\gamma\zeta$
induces a map that is a composition
\[
\xymatrix{
\CE_{n+d}/\CE_{n+d-1} \ar[r] & \CE_{n}/\CE_{n-1} \ar[r] & M
}
\]
that is easily seen to be $\mu\Omega^d(\nu)$.
\end{proof}

To explain the products of leading elements, we should note the following.
For notation, let $\Gamma(M) = M^C/Z^{p-1}M$, where $M^C$ is the submodule
of $kC$-fixed points. With the assumption that $\alpha^*(M)$ is finite 
dimensional, so also is $\Gamma(M)$. 

\begin{lemma} \label{lem:hom1}
For any $n$ we have that 
\[
\Homul_{kG}(\CE_n/\CE_{n-1}, M) \cong \begin{cases} 
\Gamma (M) \otimes \HHH^n(H,k) & \text{ if } n \text{ is even },\\
\Gamma(M \otimes U) \otimes \HHH^n(H,k) & \text{ if } n \text{ is odd }.
\end{cases}
\]
\end{lemma}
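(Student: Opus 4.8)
The plan is to make the subquotient $\CE_n/\CE_{n-1}$ explicit as a module induced from $kC$, pass to a stable hom group over the truncated algebra $kC = k[Z]/(Z^p)$ via Frobenius reciprocity, and then recognise the answer in terms of $\Gamma$.

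First I would read off from Theorem \ref{thm:strEV} and the displayed formulas for the action of $Z$ that, as a $kG$-module, $\CE_n/\CE_{n-1}$ is the inflation of the free $kH$-module $P_n$ (so that $Z$ acts as zero) when $n$ is even, and is $P_n \otimes U$ when $n$ is odd. Since $(P_*,\varepsilon)$ is minimal, $P_n$ is $kH$-free with $\Hom_{kH}(P_n,k) \cong \HHH^n(H,k)$. Writing $W = k$ when $n$ is even and $W = U$ when $n$ is odd, and using $kG \cong kH \otimes kC$ with $Z$ acting trivially on the $kH$-factor, one gets a natural isomorphism of $kG$-modules
\[
\CE_n/\CE_{n-1} \;\cong\; \Ind_C^G(W_{\downarrow C}) \otimes_k \HHH^n(H,k)^{*},
\]
where $\Ind_C^G(V) = kG \otimes_{kC} V$, and $W_{\downarrow C} = k$ for $n$ even while $W_{\downarrow C} = U_{\downarrow C} \cong \Omega_{kC}(k)$ for $n$ odd. (The identification $kH \otimes W \cong \Ind_C^G(W_{\downarrow C})$ is immediate from $kG = kH \otimes kC$; the dual of $\HHH^n(H,k)$ only records the multiplicity and cancels below.) This is where the standing hypothesis that $\alpha$ is a map of Hopf algebras is used: it says that $kC$ is a Hopf subalgebra of $kG$, hence that $kG$ is free over $kC$ and that restriction along $kC \hookrightarrow kG$ commutes with tensor products.

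Next, since $kG$ is $kC$-free, $\Ind_C^G$ carries projectives to projectives and so does restriction; hence the Frobenius adjunction $\Hom_{kG}(\Ind_C^G V, N) \cong \Hom_{kC}(V, N_{\downarrow C})$ descends to the stable categories. Combined with the previous display and the natural isomorphism $(\HHH^n(H,k)^{*})^{*} \cong \HHH^n(H,k)$, this gives a natural isomorphism
\[
\Homul_{kG}(\CE_n/\CE_{n-1}, M) \;\cong\; \Homul_{kC}(W_{\downarrow C},\, M_{\downarrow C}) \otimes_k \HHH^n(H,k),
\]
reducing the lemma to two computations over $kC$. For any $kC$-module $N$, a homomorphism $k \to N$ is an element of $N^C = \{\, n \in N : Zn = 0 \,\}$, and it factors through a projective module precisely when it factors through the socle inclusion $k \cong \mathrm{soc}(kC) = (Z^{p-1}) \hookrightarrow kC$, i.e.\ precisely when that element lies in $Z^{p-1}N$; hence $\Homul_{kC}(k, N) \cong N^C/Z^{p-1}N = \Gamma(N)$ (computed on the restriction to $kC$). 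This settles the case $n$ even. For $n$ odd, $W_{\downarrow C} = U_{\downarrow C} \cong \Omega_{kC}(k)$, and in $\stmod(kC)$
\[
\Homul_{kC}(\Omega_{kC}(k), M_{\downarrow C}) \;\cong\; \Homul_{kC}\bigl(k,\, \Omega_{kC}^{-1}(M_{\downarrow C})\bigr) \;\cong\; \Homul_{kC}\bigl(k,\, \Omega_{kC}^{-1}(k) \otimes_k M_{\downarrow C}\bigr).
\]
Because $k$ is $\Omega_{kC}$-periodic of period dividing $2$, we have $\Omega_{kC}^{-1}(k) \cong \Omega_{kC}(k) \cong U_{\downarrow C}$, and because restriction to $kC$ respects tensor products, $\Omega_{kC}^{-1}(k) \otimes_k M_{\downarrow C} \cong U_{\downarrow C} \otimes_k M_{\downarrow C} \cong (M \otimes U)_{\downarrow C}$ in $\stmod(kC)$. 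Applying $\Homul_{kC}(k, N) \cong \Gamma(N)$ with $N = (M \otimes U)_{\downarrow C}$ yields $\Homul_{kC}(\Omega_{kC}(k), M_{\downarrow C}) \cong \Gamma(M \otimes U)$, which is the case $n$ odd.

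I expect the main obstacle to be exactly this last reduction over $kC$: one must be sure that $(M \otimes U)_{\downarrow C}$ is the genuine tensor product $M_{\downarrow C} \otimes_k U_{\downarrow C}$ of restrictions, so that the syzygy manipulations over $kC$ are legitimate, and this is precisely what can fail without the hypothesis that the $\pi$-point $\alpha$ is a homomorphism of Hopf algebras. One can also proceed without $\Ind$ and Frobenius reciprocity, computing $\Hom_{kG}(\CE_n/\CE_{n-1}, M)$ directly from the $kH$-free presentation (getting $M^C \otimes_k \HHH^n(H,k)$ for $n$ even, and $\{\, m \in M : Z^{p-1}m = 0 \,\} \otimes_k \HHH^n(H,k)$ for $n$ odd, since a $kG$-map out of a copy of $kH \otimes U$ is determined by the image $m_0$ of $1 \otimes 1$ subject to $Z^{p-1}m_0 = 0$) and then passing to the stable quotient by Proposition \ref{prop:factorP}; but identifying the resulting quotient $\{\, m \in M : Z^{p-1}m = 0 \,\}/ZM$ with $\Gamma(M \otimes U)$ again runs through the same $kC$-computation.
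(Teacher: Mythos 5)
Your argument is correct, but it runs on a different mechanism than the paper's, most visibly in the odd case. The paper treats $n$ even exactly as in your ``direct'' variant: $Z$ kills $\CE_n/\CE_{n-1}$, so any homomorphism lands in $M^C$, and it factors through a projective precisely when its image lies in $Z^{p-1}M$ (the even case of Proposition \ref{prop:factorP}); for $n$ odd the paper makes one move inside $\Stmodg$, namely $\Homul_{kG}(P_n\otimes U, M)\cong \Homul_{kG}(P_n, M\otimes U)$ via the tensor--hom adjunction over $kG$ and the self-duality of $U$, and then reapplies the even case to $M\otimes U$. You instead push everything down to $kC$: you rewrite $\CE_n/\CE_{n-1}$ as copies of $\Ind_C^G(W_{\downarrow C})$, apply stable Frobenius reciprocity, and compute $\Homul_{kC}(k,-)\cong\Gamma(-)$ and $\Homul_{kC}(\Omega_{kC}(k),-)$ using periodicity and $\Omega^{-1}_{kC}(N)\cong\Omega^{-1}_{kC}(k)\otimes N$. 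Both routes are sound. The paper's is shorter and for this lemma needs only the Hopf structure on $kG$ itself (through the duality of $U$), whereas yours genuinely uses the section's standing hypothesis that $\alpha$ is a Hopf algebra map, exactly where you flag it, at $(M\otimes U)_{\downarrow C}\cong M_{\downarrow C}\otimes U_{\downarrow C}$; in exchange you get a uniform treatment of both parities through $\Gamma$ over $kC$ and you make the role of $kC$-periodicity explicit. One small correction: the Hopf hypothesis is not what gives $kH\otimes W\cong \Ind_C^G(W_{\downarrow C})$ or the freeness of $kG$ over $kC$ --- flatness of the $\pi$-point together with the decomposition $kG\cong kH\otimes kC$ already does that --- its only essential use in your argument is the monoidality of restriction in the final step.
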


\begin{proof}
We note that, if $n$ is even, then $Z$ annihilates $\CE_n/\CE_{n-1}$. 
Consequently, the image of
any homomorphism $\vartheta: \CE_n/\CE_{n-1} \to M$ lies in $M^C$. On the 
other hand, if this image lies in $Z^{p-1}M$, then $\vartheta$ factors 
through a projective module. This proves the first case. For the case 
that $n$ is odd, we notice that 
\[
\Homul_{kG}(\CE_n/\CE_{n-1}, M) = \Homul_{kG}(P_n \otimes U, M) \cong
\Homul_{kG}(P_n, M \otimes U)
\]
since $U$ is self-dual. 
\end{proof}

Putting the last two results together we easily get the following. 

\begin{lemma} \label{lem:hom2}
In the notation of Proposition \ref{prop:mult1}, if $\zeta$ has
even degree $n$ and 
leading term $\mu = \sum_{i =1}^t m_i \otimes \vartheta_i$ 
for $m_i \in \Gamma(M)$ and $\vartheta_i \in \HHH^n(H,k)$
as in Lemma \ref{lem:hom1}, then $\mu\Omega^d(\nu) = 
\sum_{i =1}^t m_i \otimes \vartheta_i\nu$ where by $\vartheta_i\nu$ 
we mean the product in $\HHH^*(H,k)$.
\end{lemma}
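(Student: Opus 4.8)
The plan is to compute the composite of Proposition~\ref{prop:mult1} directly from the explicit model of $\CE_V$ in Theorem~\ref{thm:strEV}, reducing the assertion to the description of the cup product on $\HHH^*(H,k)$ as a composition of chain maps recalled after Proposition~\ref{prop:endoE1}. By Proposition~\ref{prop:mult1} it suffices to analyze the map
$\CE_{n+d}/\CE_{n+d-1}\xrightarrow{\ \Omega^d(\nu)\ }\CE_d/\CE_{d-1}\xrightarrow{\ \mu\ }M$,
where $d$ is the (even) degree of $\zeta$ and $n$ the degree of $\gamma$, in the notation of that proposition. First I would make the two ingredients concrete. Since $\gamma$ is realized on $\CE_V$ by a chain map $\{\gamma_i\}\colon P_*\to P_*$ of degree $n$ (the discussion following Proposition~\ref{prop:endoE1}), the induced map $\Omega^d(\nu)$ is, up to the twist by $U$ that appears when an index is odd, the component $\gamma_d\colon P_{n+d}\to P_d$. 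On the other side, fixing a representative $\mu'$ of the leading term of $\zeta$ with $\mu'\iota_{d-1}=0$, the induced $kH$-homomorphism $\bar\mu\colon\CE_d/\CE_{d-1}\cong P_d\to M$ has image in $M^C$ (as $Z$ annihilates $\CE_d/\CE_{d-1}$ for $d$ even), and under the isomorphism of Lemma~\ref{lem:hom1} its class is recorded by the classes $m_i\in\Gamma(M)$ of the images $\bar\mu(w_i)\in M^C$ on a free $kH$-basis $\{w_i\}$ of $P_d$, with $\vartheta_i=w_i^{*}\in\HHH^d(H,k)$.

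The core of the argument is that $\bar\mu$ is a cocycle. Because $\zeta$ is defined on all of $\CE_V$ and the relation $\partial(x)=Z^{p-1}(x\otimes 1)$ holds in $\CE_{d+1}$ (read off from Theorem~\ref{thm:strEV}), one gets $\bar\mu(\partial(P_{d+1}))\subseteq Z^{p-1}M$; hence, after passing to $\Gamma(M)=M^C/Z^{p-1}M$, the map $\bar\mu$ factors through $P_d/\partial(P_{d+1})\cong\Omega^d(k)$. Using the chain‑map identity $\partial\gamma_d=\gamma_{d-1}\partial$, the composite $\bar\mu\circ\gamma_d\colon P_{n+d}\to\Gamma(M)$ then depends on $\gamma_d$ only through its reduction $\bar\gamma_d\colon W_{n+d}\to W_d$ modulo radicals. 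By minimality of $(P_*,\varepsilon)$ this reduction $\bar\gamma_d$ is exactly the matrix of cup product by $\gamma$ on $\HHH^*(H,k)$ — this is what "the product on $\HHH^*(H,k)$ is the homotopy class of the composition of chain maps" says — so, identifying $\nu$ with $\gamma\in\HHH^n(H,k)$ and evaluating on the basis $\{w_i\}$, the composite is $\sum_i m_i\otimes(\vartheta_i\nu)$ with $\vartheta_i\nu$ the product in $\HHH^*(H,k)$. Proposition~\ref{prop:mult1} and Lemma~\ref{lem:hom1} package this into the stated formula. When $n$ or $d$ is odd, the same computation is run with $M$ replaced by $M\otimes U$ and one invokes the self‑duality of $U$ exactly as in the proof of Lemma~\ref{lem:hom1}, the only change being the extra $\partial(-)\otimes Z^{p-2}$ terms dictated by the formulas for $\hat\gamma$ following Proposition~\ref{prop:endoE1}.

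The step I expect to be the main obstacle is precisely the verification that $\bar\mu\circ\gamma_d$ is insensitive to the radical part of the chain‑map component $\gamma_d$: a priori the composite is only determined modulo $\Rad(kH)\cdot\Gamma(M)$, and it is the hypothesis that $\mu$ is a genuine \emph{leading term} — equivalently that $\bar\mu$ vanishes on $\partial(P_{d+1})$, so that it descends to $\Omega^d(k)$ — together with a homotopy‑stable bookkeeping of the chain maps that kills this ambiguity. Everything else is the computation already done for $\Endul_{kG}(\CE_V)$ in Lemma~\ref{ref:endoprods} and Proposition~\ref{prop:endoE2}, transported verbatim to $\Homul_{kG}(\CE_V,M)$ via Lemma~\ref{lem:hom1}.
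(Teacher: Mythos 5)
Your overall frame is the right one: the paper offers no argument for this lemma beyond ``putting the last two results together,'' so fleshing out the chain-level computation on the model of Theorem \ref{thm:strEV}, with $\Omega^d(\nu)$ realized by the chain-map component $\gamma_d$ and $\mu$ by a map $\bar\mu\colon P_d\to M^C$, is exactly the intended route. But the step you yourself single out as the main obstacle is a genuine gap, and the mechanism you propose to close it does not close it. The leading-term condition gives only $\bar\mu(\partial P_{d+1})\subseteq Z^{p-1}M$, i.e.\ it kills contributions from $\ker(\partial\colon P_d\to P_{d-1})=\partial(P_{d+1})$; it does not make $\bar\mu\circ\gamma_d$ insensitive to the full radical part of $\gamma_d$, because $\partial(P_{d+1})$ is in general a proper submodule of $\Rad(kH)P_d$ and the radical entries of a chain map typically have nonzero boundary. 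Concretely, let $p\geq 3$, $kH=k[x,y]/(x^p,y^p)$ (so $r=3$), take the tensor-product minimal resolution with $kH$-basis $a_ib_j$ of $P_{i+j}$, and let $\nu=\eta_x\eta_y\in\HHH^2(H,k)$. Every chain map representing $\nu$ has component $P_4\to P_2$ sending $a_2b_2\mapsto\pm x^{p-2}y^{p-2}\,a_1b_1+\cdots$, and a chain homotopy changes this entry only by elements of the ideal $(x^{p-1},y^{p-1})$, so the radical entry cannot be removed; moreover $\partial\bigl(x^{p-2}y^{p-2}a_1b_1\bigr)=x^{p-1}y^{p-2}a_0b_1\pm x^{p-2}y^{p-1}a_1b_0\neq 0$, so this entry does not lie in $\partial(P_{3})$ and your cocycle property says nothing about it. Now pair this with a legitimate leading term: take $M=\CE_V/\CE_1$ and $\zeta$ the quotient map $\CE_V\to M$, which has degree $2$ with leading term the inclusion $P_2\cong\CE_2/\CE_1\hookrightarrow M$, so $m_i=[w_i]$, $\vartheta_i=w_i^*$, and in particular $\vartheta_2\nu=(\eta_x\eta_y)^2=0$. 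The composite $\bar\mu\circ\gamma_2$ then has the extra coordinate $[x^{p-2}y^{p-2}a_1b_1]\otimes(a_2b_2)^*$, which the right-hand side $\sum_i m_i\otimes\vartheta_i\nu$ does not contain; and this coordinate is nonzero, since the $P_2$-component of any element of $Z^{p-1}\CE_V$ lies in $\partial(P_3)$ while $x^{p-2}y^{p-2}a_1b_1$ has nonzero boundary, so by Proposition \ref{prop:factorP} the discrepancy map does not factor through a projective.

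So the assertion that $\bar\mu\circ\gamma_d$ ``depends on $\gamma_d$ only through its reduction modulo radicals'' is not a consequence of $\bar\mu$ descending to $\Omega^d(k)$ plus homotopy bookkeeping; homotopies move the offending entries only within the ideal generated by the entries of $\partial$, and the residual terms pair nontrivially with genuine leading terms. To repair the argument you need additional input of some kind: for instance restrict the multipliers $\nu$ to classes whose chain maps on this resolution have scalar entries (the polynomial generators $\zeta_1,\dots,\zeta_{r-1}$, or all of $\HHH^*(H,k)$ when $p=2$), or impose a hypothesis forcing $\Rad(kH)\cdot M^C\subseteq Z^{p-1}M$, or prove a corrected formula recording the action of the radical entries of $\gamma_d$ on $\Gamma(M)$. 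As written, the key step of your proof is unjustified, and the computation above indicates that the clean product formula does not hold in the stated generality without some such extra hypothesis or convention.
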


We can now prove the main theorem of the section. 

\begin{thm} \label{thm:fingen}
Let $kG = k[t_1, \dots, t_r]/(t_1^p, \dots, t_r^p)$.
We let $\alpha:A = k[t]/(t^p) \to kG$ be a $\pi$-point
defined over $k$. 
Let $V$ be the variety consisting of the
closed point represented by $\alpha$.
If $M$ is a $kG$-module whose support variety consists of the
single point $V$, and if the restriction $\alpha^*(M)$ is a compact object in 
$\Stmod(A)$, then $\Homul_{kG}(\CE_V, M)$ is finitely generated
as a module over $\Endul_{kG}(\CE_V)$.
\end{thm}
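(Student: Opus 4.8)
The plan is to exploit the filtration $\CE_0 \subseteq \CE_1 \subseteq \CE_2 \subseteq \cdots$ of $\CE_V$ together with the degree/leading-term formalism developed in Propositions \ref{prop:degree}, \ref{prop:mult1} and Lemmas \ref{lem:hom1}, \ref{lem:hom2}. The key observation is that $\Homul_{kG}(\CE_V, M)$ is filtered by degree: the subset of elements of degree $\geq d$ is a submodule over $\Endul_{kG}(\CE_V)$, and the associated graded piece in degree $d$ injects (via the leading-term map) into $\Homul_{kG}(\CE_d/\CE_{d-1}, M)$, which by Lemma \ref{lem:hom1} is $\Gamma(M) \otimes \HHH^d(H,k)$ for $d$ even and $\Gamma(M\otimes U)\otimes \HHH^d(H,k)$ for $d$ odd. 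The hypothesis that $\alpha^*(M)$ is compact guarantees $\Gamma(M) = M^C/Z^{p-1}M$ and $\Gamma(M\otimes U)$ are finite dimensional over $k$.

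Here is the order of steps I would carry out. First I would record that the degree filtration on $\Homul_{kG}(\CE_V,M)$ is a filtration by $\Endul_{kG}(\CE_V)$-submodules, using Proposition \ref{prop:mult1}: multiplying an element of degree $d$ by an element of degree $n$ (in $\Endul_{kG}(\CE_V)$) lands in degree $\geq d+n \geq d$. Second, I would identify the $d$-th graded piece. Since $\Gamma(M)$ and $\Gamma(M\otimes U)$ are finite dimensional, I can pick a finite $k$-basis; combining this with the fact that $\HHH^*(H,k)$ is a finitely generated $k$-algebra and the graded pieces $\HHH^d(H,k)$ assemble over all $d$ into that finitely generated algebra, I can choose finitely many elements $\zeta_1, \dots, \zeta_s \in \Homul_{kG}(\CE_V,M)$ whose leading terms, under the multiplication action described in Lemma \ref{lem:hom2}, generate all the graded pieces as a module over the subring of $\Endul_{kG}(\CE_V)$ corresponding to (even-degree) cohomology. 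Concretely: for each generator $m_i$ of $\Gamma(M)$ resp. $\Gamma(M\otimes U)$ one picks a homomorphism $\CE_V \to M$ realizing $m_i$ in its lowest nonzero degree; Lemma \ref{lem:hom2} then says that multiplying by a cohomology class $\vartheta \in \HHH^*(H,k)$ (acting as $\hat\vartheta \in \Endul_{kG}(\CE_V)$) shifts the leading term by the cohomology product $\vartheta_i \mapsto \vartheta_i\vartheta$, so finitely many module generators of $\HHH^*(H,k)$ suffice to sweep out all degrees. Third, I would run the standard "lift generators degree by degree" argument: given any $\xi \in \Homul_{kG}(\CE_V,M)$ of degree $d$, its leading term is an $\Endul_{kG}(\CE_V)$-combination of the leading terms of the chosen generators, so subtracting the corresponding combination yields an element of strictly larger degree, and by Proposition \ref{prop:degree} every element has finite degree; one must check this process terminates, which follows because at each stage the degree strictly increases and — this is where compactness is used again — there is a uniform bound past which $Z^{p-1}M$-containment forces leading terms to vanish (Proposition \ref{prop:factorP}), or more simply the submodule of elements of sufficiently high degree vanishes because $\alpha^*(M)$ being compact means $ZM = M$ or $M=0$ in high enough "depth".

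The main obstacle I expect is making the termination argument genuinely finite rather than merely transfinite. The naive degree-by-degree lifting only shows $\Homul_{kG}(\CE_V,M)$ is generated by the $\zeta_j$ "in the limit," i.e. every element is an infinite sum; to get honest finite generation one must exploit that $\Gamma(M)$ and $\Gamma(M\otimes U)$ are finite dimensional to bound the number of distinct "leading-term types" that can occur, and then use that $\HHH^*(H,k)$ is Noetherian so that the module of leading terms (a finitely generated graded module over a finitely generated commutative-up-to-the-Lemma-\ref{ref:endoprods}-relations algebra) is Noetherian. In other words, the heart of the proof is: the associated graded of $\Homul_{kG}(\CE_V,M)$ with respect to the degree filtration is a finitely generated module over the associated graded of $\Endul_{kG}(\CE_V)$ (which, by Proposition \ref{prop:endoE2}, is essentially $\prod \Gamma_i$ with $\Gamma$ Noetherian), and finite generation of the associated graded lifts to finite generation of $\Homul_{kG}(\CE_V,M)$ itself provided the filtration is exhaustive and complete — exhaustiveness is Proposition \ref{prop:degree}, and completeness should follow from the inverse-limit description $\Homul_{kG}(\CE_V,-) \cong \varprojlim \Homul_{kG}(\CE_n,-)$ forced by minimality of the resolution. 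I would also use Lemma \ref{lem:gen1} at the outset to replace $M$ by $\CE_V\otimes M$ if convenient, though it does not seem strictly necessary here.
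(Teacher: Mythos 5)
Your proposal follows essentially the same route as the paper's proof: filter $\Homul_{kG}(\CE_V,M)$ by degree, identify the leading terms inside $(\Gamma(M)\oplus\Gamma(M\otimes U))\otimes\HHH^*(H,k)$ (finite dimensional in each degree because $\alpha^*(M)$ is compact), use noetherianity of the even cohomology $\HHH^*_{ev}(H,k)$ to stabilize the submodule of leading terms and extract finitely many generators, and then lift degree by degree. Your termination worry is handled in the paper exactly as in your final paragraph rather than by your earlier aside (elements of arbitrarily high degree do \emph{not} vanish): the lifting process is allowed to run forever, and the resulting coefficients, being sums of elements of strictly increasing degrees, are honest elements of $\Endul_{kG}(\CE_V)\cong\prod_{i\geq 0}\HHH^i(H,k)$, while Proposition \ref{prop:degree} (every element has finite degree) forces the remainder to be zero.
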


\begin{proof}
As before, we write $kG \cong kH \otimes kC$ where $kC$ is the 
image of $\alpha$ and $kH$ is a complementary subalgebra. 
By Lemma \ref{lem:extendHopf}, there is a Hopf algebra structure on 
$kG$ such that $\alpha$ is a map of Hopf algebras. We assume this 
structure so that the previous results of this section hold. 
Let $\HHH_{ev}^* = \sum_{n \geq 0} \HHH^{2n}(H,k)$. Notice that 
$\HHH^*(H,k)$ is finitely generated over $\HHH_{ev}^*$. 

Let $\CL_n$ be
the collection of all leading terms of elements in $\Homul_{kG}(\CE_V, M)$
having degree at most $n$. Thus, 
\[
\CL_n \subseteq \sum_{j=0}^n (\Gamma(M) \oplus \Gamma(M \otimes U))
\otimes \HHH^j(H,k). 
\]
Let $\CH = \sum_{j\geq 0}  (\Gamma(M) \oplus \Gamma(M \otimes U))
 \otimes \HHH^j(H,k)$. Let $I_n \subset \CH$ be the $\HHH_{ev}^*$-submodule
of $\CH$ generated by $\CL_n$. Then we have an increasing sequence of
submodules $I_0 \subseteq I_1 \subseteq I_2 \dots $. Because, $\CH$ is 
finitely generated over $\HHH_{ev}^*$, which is noetherian, 
the sequence terminates. That is,  $I = \cup_{n} I_n$ is a finitely 
generated $\HHH_{ev}^*$-submodule of $\CH$  and $I = I_N$ for some $N$.  

For each $j = 0, \dots, N$, choose a finite set of elements 
$U_j \subseteq \Homul_{kG}(\CE_V, M)$ such that the set of leading terms of 
the elements of $U_j$ generated $I_j/I_{j-1}$. 
Let $U = \{u_1, u_2, \dots, u_t\} = \cup_{j=0}^N U_j$. We claim 
that $U$ is a set of generators for $\Homul_{kG}(\CE_V, M)$ as a module 
over $\HHH_{ev}^*$. 

For each $j$, let $v_j$ be the leading term of $u_j$. Suppose that 
$\gamma \in \Homul_{kG}(\CE_V, M)$.  Let $s$ be the 
degree of $\gamma$ and let $\mu$ be its leading term. 
Then we can write $\mu = \sum_{i = 1}^t u_i\zeta_i$ where each $\zeta_i$
is an element of $\HHH^*(H,k)$. Let $\zeta_i$ denote also an element 
in $\Endul_{kG}(\CE_V) = \prod_{i} \HHH^i(H,k)$, that has leading term 
$\zeta_i$. Let $\beta_1 = \sum_{i=0}^t v_i\zeta_i$. Note that $\beta_1$ 
has degree $s$ and 
has the same leading term as $\gamma_1 = \gamma$. Then 
$\gamma_2 = \gamma_1-\beta_1$ has larger degree than $s$, the degree of 
$\gamma$.  Now repeat this process with $\gamma_2$ in place of $\gamma$.
We get $\beta_2$ with the same degree and leading term as $\gamma_2$, 
so that $\gamma_3 = \gamma_2- \beta_2$ has higher degree than $\gamma_2$.

Taking a limit of this process we obtain a sequence $\beta_1, \beta_2, \dots$
such that $\gamma = \beta_1 +\beta_2 + \dots$. Note that we can add the 
elements of this infinite sequence because they have different degrees. 
For each $j$, we have that  
$\beta_j = \sum_{i=1}^t v_i \zeta_{i,j}$ for some elements
$\zeta_{i,j} \in \Endul_{kG}(\CE_V)$. Thus,
\[
\alpha = \sum_{j \geq 1} \beta_j = 
\sum_{i= 0}^t v_i (\sum_{j \geq 1} \zeta_{i,j})
\]
It follows that the $\{v_i\}_{1 \leq i \leq t}$ is a set of generators
for $\Homul_{kG}(\CE_V, M)$ as asserted. 

Finally, we point out that the conclusion of the theorem is independent of 
the Hopf algebra structure on $kG$. Consequently, the theorem is true even 
without the assumption on the Hopf 
nature of the $\pi$-point at the beginning of the proof. 
\end{proof}

\begin{rem} \label{rem:dualizing}
In \cite{BIKP2} it is proved that Theorem \ref{thm:fingen} has a strong 
converse. That is, the authors of \cite{BIKP2} show that the condition 
on the module $M$ in the hypothesis is equivalent to the condition that
the module by dualizable, which in turn is equivalent to the finite
generation condition on $\Homul(\CE_V, M)$
\end{rem}


\section{(Co)Local supports} \label{sec:locsupp}
If a module is finitely generated over its base ring, then its annihilator
in that ring is an ideal, and we can define the support variety of 
the module. Rings constructed as $\Endul_{kG}(\CE_V)$, as in 
Section \ref{sec:endomorph}, are not often finitely generated as
algebras over the base field $k$. We expect that the prime ideal spectrum
of such a ring is chaotic. 
None the less, the annihilator of a 
module such as $\Homul_{kG}(\CE_V, M)$ for a $M$ a $kG$-module, is an 
invariant and can be used to distinguish modules. In this section we take
a brief look at some of the possibilities. In particular, we prove a form
of a realization theorem. 

To this end, let $\FA_V(M) = \Ann_{\CC}(\Homul_{kG}(\CE_V,M))$ where 
$\CC = \Endul_{kG}(\CE_V)$ and $V$ is a closed subvariety of $V_G(k)$.
From Lemma \ref{lem:gen1}, we see that if $M$ is a finite dimensional 
$kG$-module, then $\FA_V(M) = \FA_V(M \otimes \CE_V)$. 

We begin with a couple of easy result. 

\begin{prop} \label{prop:loctri}
Suppose that the variety of $M$ is in $V$ for $V$ as above, a close
subvariety of $V_G(k)$. Then, 
$\Homul_{kG}(\CE_V,M) \cong \Homul_{kG}(k,M)$. 
\end{prop}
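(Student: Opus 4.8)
The plan is to use the universal property of the triangle $\CS_V$ (as recalled in Section 2), applied to a module $M$ whose variety $\CV_G(M)$ is contained in $V$. First I would recall that the hypothesis ``variety of $M$ is in $V$'' means $M \in \CM_V$ (when $M$ is finitely generated; for a general $kG$-module one takes $M$ to lie in the localizing subcategory generated by $\CM_V$). We want to show that the map $\sigma_V \colon \CE_V \to k$ induces an isomorphism
\[
(\sigma_V)_* \colon \Homul_{kG}(k, M) \xrightarrow{\ \sim\ } \Homul_{kG}(\CE_V, M).
\]
Wait — the direction is $\sigma_V^* = (\sigma_V)^*$, i.e.\ precomposition with $\sigma_V$, sending $\Homul_{kG}(k,M)$ to $\Homul_{kG}(\CE_V, M)$. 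So the key point is that $\sigma_V^*$ is bijective.

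The cleanest route is to apply $\Homul_{kG}(-, M)$ to the distinguished triangle
\[
\xymatrix{
\CE_V \ar[r]^{\sigma_V} & k \ar[r]^{\tau_V} & \CF_V \ar[r] & \Omega^{-1}(\CE_V)
}
\]
obtaining a long exact sequence in which $\sigma_V^*$ is sandwiched between $\Homul_{kG}(\CF_V, M)$ and $\Homul_{kG}(\Omega^{-1}(\CF_V), M) \cong \Homul_{kG}(\CF_V, \Omega(M))$. So it suffices to show $\Homul_{kG}(\CF_V, N) = 0$ whenever $N$ lies in the (localizing) subcategory associated to $\CM_V$; applying this to $N = M$ and $N = \Omega(M)$ finishes the argument. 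This vanishing is exactly the orthogonality built into the Rickard construction: $\CF_V \otimes \CE_{V} \cong 0$ in the stable category, and more generally any map from $\CF_V$ to an object supported in $V$ factors through $\CF_V \otimes \CE_V \simeq 0$. Concretely, for $N$ in the subcategory, tensoring the triangle $\CS_V$ with $N$ gives $N \cong \CE_V \otimes N$ (since $\CF_V \otimes N \simeq 0$, because the variety of $N$ misses the support of $\CF_V$), so $\Homul_{kG}(\CF_V, N) \cong \Homul_{kG}(\CF_V, \CE_V \otimes N) \cong \Homul_{kG}(\CF_V \otimes \CE_V^{\vee}, N)$ — or, more simply, a map $\CF_V \to N \cong \CE_V \otimes N$ factors through $\CF_V \otimes \CE_V \otimes N \simeq 0$ by the universal/functorial property, hence is zero in the stable category. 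This is the same mechanism used in the proof of Lemma \ref{lem:gen1}, just with the roles of $\CE_V$ and $\CF_V$ and the source and target interchanged; indeed the statement here is the ``complementary'' companion of Lemma \ref{lem:gen1}.

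The main obstacle, such as it is, is purely bookkeeping: making precise what ``the variety of $M$ is in $V$'' means for a possibly infinite-dimensional module (one should interpret it via $M$ belonging to the localizing tensor-ideal subcategory generated by $\CM_V$, equivalently $\CV_G(M) \subseteq V$ in the sense of support for big modules), and then correctly identifying which two terms of the long exact sequence vanish. Once that is pinned down, the argument is a two-line diagram chase on the long exact sequence. I would present it in exactly that order: (i) restate the hypothesis as $M$ in the localizing subcategory $\CM_V^{\mathrm{loc}}$; (ii) record the vanishing $\Homul_{kG}(\CF_V, N) = 0$ for all such $N$, via $\CF_V \otimes \CE_V \simeq 0$ and the factorization property; (iii) apply $\Homul_{kG}(-,M)$ to $\CS_V$ and read off that $\sigma_V^* \colon \Homul_{kG}(k,M) \to \Homul_{kG}(\CE_V, M)$ is an isomorphism, using (ii) for $M$ and for $\Omega(M)$.
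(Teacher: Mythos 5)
There is a genuine gap at your step (ii). The vanishing you assert, $\Homul_{kG}(\CF_V, N) = 0$ for every $N$ in the localizing subcategory attached to $\CM_V$, is false, and the justification offered does not work: there is no universal or functorial property forcing a map $\CF_V \to \CE_V \otimes N$ to factor through $\CF_V \otimes \CE_V \otimes N$. The orthogonality built into the Rickard triangle runs in the opposite direction: objects of the localizing subcategory admit no nonzero maps \emph{into} objects of the form $\CF_V \otimes X$ (this is what Lemma \ref{lem:gen1} uses, via $\Homul_{kG}(\CE_V, \CF_V \otimes M) = 0$), whereas you need the reverse statement, maps \emph{from} $\CF_V$ into objects supported in $V$. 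A concrete counterexample: $N = \Omega^{-1}(\CE_V)$ lies in the localizing subcategory, and the connecting morphism $\CF_V \to \Omega^{-1}(\CE_V)$ of the triangle $\CS_V$ is nonzero, since otherwise $k \cong \CE_V \oplus \CF_V$ in $\Stmodg$, contradicting the fact that $\Endul_{kG}(k) \cong k$ is local. Relatedly, the proposition itself cannot be read for arbitrary big modules with $\pi$-support in $V$: taking $M = \CE_V$, the group $\Homul_{kG}(k, \CE_V)$ has $k$-dimension bounded by $\dim_k \CE_V$, while $\Endul_{kG}(\CE_V) \cong \prod_{i \geq 0} \HHH^i(H,k)$ is of strictly larger dimension in the setting of Section \ref{sec:endomorph}; so the hypothesis should be taken to mean $M$ finitely generated with $V_G(M) \subseteq V$.

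With that reading, your overall plan (apply $\Homul_{kG}(-, M)$ to $\CS_V$ and kill the two flanking terms) can be repaired, but the vanishing must be proved differently: since $M$ is finite dimensional, hence dualizable, $\Homul_{kG}(\Omega^n \CF_V, M) \cong \Homul_{kG}(\Omega^n(\CF_V) \otimes M^{*}, k)$, and $\CF_V \otimes M^{*} \cong 0$ in the stable category because $\CV_G(M^{*}) = \CV_G(M) \subseteq V$ is disjoint from $\CV_G(\CF_V)$; thus both terms vanish and $\sigma_V^{*}$ is an isomorphism. Note also that the paper's own argument is not the one you attribute to it: it tensors the triangle $\CS_V$ with $M$ and uses $M \otimes \CF_V \cong 0$ (so $M \cong \CE_V \otimes M$), rather than asserting any vanishing of $\Homul_{kG}(\CF_V, -)$ on the localizing subcategory.
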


\begin{proof}
In the stable category $M \otimes \CF_V = 0$. Hence,
$\Homul_{kG}(\CE_V,M \otimes \CF_V) = 0$. Thus the result follows from
the distinguished triangle $\CS_V$ involving $\CE_V$.
\end{proof}

The next result follows directly from Remark \ref{rem:endoEM}.

\begin{prop} \label{prop:inflate}
Suppose that $kG \cong kH \otimes kC$ as in Theorem \ref{thm:strEV}. That is,
$kC \cong k[t]/(t^p)$ is the image of a $\pi$-point $\alpha$ whose class $V$ in 
$V_G(k)$ is a closed point, and $kH$
is the group algebra of an elementary abelian $p$-group. Suppose that $M$ 
is the inflation of a finitely generated $kH$-module. Then, $\FA_V(M)$ 
consists of all tuples $(\gamma_1, \gamma_2, \dots)$ such that 
$\gamma_i$ annilates $\HHH^*(G, M)$ for all $i$.  
\end{prop}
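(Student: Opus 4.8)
The plan is to reduce everything to $\Ext$-modules over $kH$. First I would note that, since $M$ is inflated from a $kH$-module $M_0 = M_{\downarrow H}$, the element $Z=\alpha(t)$ acts as $0$ on $M$, so $\Gamma(M)=M$ and Remark \ref{rem:endoEM} applies: $\CE_V\otimes M$ is the module $\CE_V(M)$ assembled, exactly as $\CE_V=\CE_V(k)$ in Theorem \ref{thm:strEV}, out of a minimal projective $kH$-resolution of $M_0$. By Lemma \ref{lem:gen1}, $\Homul_{kG}(\CE_V,M)\cong\Homul_{kG}(\CE_V,\CE_V\otimes M)$ as modules over $\CC=\Endul_{kG}(\CE_V)$; running the computation behind Proposition \ref{prop:endoE1}, Proposition \ref{prop:endoE2} and Remark \ref{rem:endoEM}, but now with source $\CE_V(k)$ and target $\CE_V(M)$ (rather than $\CE_V(M)$ on both sides), identifies $\Homul_{kG}(\CE_V,M)$, as a $\CC$-module, with the internal-degree completion of $\bigoplus_{i\ge 0}\Ext^i_{kH}(k,M_0)$ over $\CC\cong\prod_{i\ge 0}\HHH^i(H,k)$. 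The point of this step is that, up to the parity twist $\varphi$ of Section \ref{sec:endomorph}, the action of a homogeneous class $\gamma_i\in\HHH^i(H,k)$ on the internal-degree-$n$ part is the honest cup-product pairing $\HHH^i(H,k)\otimes\Ext^n_{kH}(k,M_0)\to\Ext^{i+n}_{kH}(k,M_0)$.

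Granting this identification, the annihilator is easy to read off. Because the $\CC$-action raises internal degree by the degree of the acting element and distinct internal degrees do not interact, an element $\gamma=(\gamma_i)$ annihilates $\Homul_{kG}(\CE_V,M)$ if and only if every homogeneous component $\gamma_i$ does. The reverse implication uses Proposition \ref{prop:mult1} and Lemma \ref{lem:hom2}: the effect of $\gamma_i$ on a leading term is, up to the twist $\varphi$ and the vanishing of products of two odd-degree classes (Lemma \ref{ref:endoprods}), a genuine cup product into $\Ext^*_{kH}(k,M_0)$, hence zero once $\gamma_i$ kills $\Ext^*_{kH}(k,M_0)$; the forward implication uses in addition that every class in $\Ext^n_{kH}(k,M_0)$ occurs as the leading term of some element of $\Homul_{kG}(\CE_V,M)$, which one produces by the same convergence construction as in the proof of Theorem \ref{thm:fingen}. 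Thus $\FA_V(M)=\{(\gamma_i):\gamma_i\text{ annihilates }\Ext^*_{kH}(k,M_0)\text{ for all }i\}$.

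It then remains to replace $\Ext^*_{kH}(k,M_0)$ by $\HHH^*(G,M)$. Since $kG\cong kH\otimes kC$ with $kC$ acting trivially on both $k$ and $M$, the Künneth/external-product isomorphism gives $\HHH^*(G,M)\cong\HHH^*(H,M_0)\otimes_k\HHH^*(C,k)$ as modules over $\HHH^*(G,k)\cong\HHH^*(H,k)\otimes_k\HHH^*(C,k)$, with the inflated subalgebra $\HHH^*(H,k)$ acting through the first tensor factor alone. As $\HHH^*(C,k)$ is a nonzero free $k$-module, a class $\gamma_i\in\HHH^i(H,k)$ annihilates $\HHH^*(G,M)$ if and only if it annihilates $\HHH^*(H,M_0)=\Ext^*_{kH}(k,M_0)$. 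Combining this with the previous paragraph yields the stated description of $\FA_V(M)$.

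The main work, and the only delicate point, is the identification in the first paragraph: $\Homul_{kG}(\CE_V,M)$ is not literally the direct product of its internal-degree pieces (there are extensions at the finite stages $\CE_n$, reflected in the degree/leading-term formalism of Definition \ref{def:degree}), and one must track the parity twist $\varphi$ carefully enough to be sure that, for the purpose of computing annihilators, each internal slot really does see the $\HHH^*(H,k)$-module $\Ext^*_{kH}(k,M_0)$. Once that is in hand, the graded argument and the Künneth decomposition are routine.
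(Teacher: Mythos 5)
Your argument is correct and is essentially the paper's own: the paper simply says the proposition ``follows directly from Remark \ref{rem:endoEM},'' and your first two paragraphs spell out exactly that identification of $\Homul_{kG}(\CE_V,M)$ with the completed $\Ext^*_{kH}(k,M_{\downarrow H})$ as a module over $\Endul_{kG}(\CE_V)\cong\prod_i\HHH^i(H,k)$, using Lemma \ref{lem:gen1} and the leading-term machinery of Sections \ref{sec:endomorph}--\ref{sec:fingen}. Your final K\"unneth step, replacing $\HHH^*(H,M_{\downarrow H})$ by $\HHH^*(G,M)$ via the trivial $kC$-action, is the only point the paper leaves implicit, and it is handled correctly.
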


Before further exploration, we need a technical result. 

\begin{lemma} \label{lem:ann-tri}
Let $V$ be a closed subvariety of $V_G(k)$. Suppose that 
\[
\xymatrix{
{} \ar[r] & L \ar[r]^\alpha & M \ar[r]^\beta & N \ar[r]  & \Omega^{-1}(L) 
}
\]
is a triangle of $kG$-modules and that $\zeta \in \FA_V(N)$, $\gamma \in 
\FA_V(L)$. Then $\zeta\gamma \in \FA_V(M)$.
\end{lemma}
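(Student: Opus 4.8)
The statement to prove is Lemma~\ref{lem:ann-tri}: given a triangle $L \to M \to N \to \Omega^{-1}(L)$ and elements $\zeta \in \FA_V(N)$, $\gamma \in \FA_V(L)$ in $\CC = \Endul_{kG}(\CE_V)$, the product $\zeta\gamma$ annihilates $\Homul_{kG}(\CE_V, M)$. The plan is to apply the homological functor $\Homul_{kG}(\CE_V, -)$ to the triangle, obtaining the long exact sequence of $\CC$-modules
\[
\cdots \to \Homul_{kG}(\CE_V, L) \xrightarrow{\alpha_*} \Homul_{kG}(\CE_V, M) \xrightarrow{\beta_*} \Homul_{kG}(\CE_V, N) \to \cdots,
\]
and to do a two-step diagram chase. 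The key point is that both $\alpha_*$, $\beta_*$ and the connecting maps are maps of $\CC$-modules, because $\CE_V \otimes -$ acts as the identity on the colocalized subcategory and so $\CC$ acts naturally on all the $\Hom$-groups in sight (this is exactly the naturality observed in Section~\ref{sec:endomorph}, following Krause's description of $\CC$ as the center of the localizing subcategory).

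\textbf{Key steps.} First I would take an arbitrary $f \in \Homul_{kG}(\CE_V, M)$ and push it forward: $\beta_* f \in \Homul_{kG}(\CE_V, N)$. Since $\zeta \in \FA_V(N)$, we get $\beta_*(\zeta f) = \zeta \cdot \beta_* f = 0$, so by exactness $\zeta f = \alpha_* g$ for some $g \in \Homul_{kG}(\CE_V, L)$. Second, since $\gamma \in \FA_V(L)$ we have $\gamma g = 0$ in $\Homul_{kG}(\CE_V, L)$, and therefore, using that $\alpha_*$ is $\CC$-linear,
\[
(\zeta\gamma) f = \gamma(\zeta f) = \gamma \cdot \alpha_* g = \alpha_*(\gamma g) = \alpha_*(0) = 0.
\]
Here I am also using that $\CC$ is commutative (Krause, \cite{Kra}), so $\zeta\gamma = \gamma\zeta$ and the order in which the two annihilator conditions are invoked does not matter. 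Since $f$ was arbitrary, $\zeta\gamma \in \FA_V(M)$, which is the claim.

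\textbf{Main obstacle.} There is essentially no deep obstacle here; the lemma is a formal consequence of the long exact sequence together with the $\CC$-module structure on stable $\Hom$ out of an idempotent. The only point that requires a word of justification is that the long exact sequence obtained by applying $\Homul_{kG}(\CE_V,-)$ to the triangle is a sequence of $\CC$-modules and $\CC$-module maps — i.e., that the connecting homomorphisms commute with the $\Endul_{kG}(\CE_V)$-action. This follows because the action of $\CC$ is induced by precomposition with endomorphisms of $\CE_V$, while the long exact sequence maps are all induced by postcomposition with the maps of the triangle, and these two operations commute by associativity of composition. So in writing this up I would state the long exact sequence, remark on its $\CC$-linearity, and then give the short two-line chase above.
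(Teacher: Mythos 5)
Your proposal is correct and is essentially the paper's own proof: take a map out of $\CE_V$ into $M$, act by $\zeta$, observe it dies under $\beta_*$ because $\zeta\in\FA_V(N)$, lift it through $\alpha_*$ by exactness, and kill the lift with $\gamma\in\FA_V(L)$, using that the $\CC$-action (precomposition) commutes with the maps of the long exact sequence (postcomposition). The only cosmetic difference is that the paper writes the $\CC$-action on the right, so it never needs to invoke commutativity of $\CC$ explicitly as you do.
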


\begin{proof}
For $\mu \in \Homul_{kG}(\CE_V, M)$, we have that 
$\beta(\mu\zeta) = (\beta\mu)\zeta = 0$ in the stable category. 
Consequently, there exists $\nu \in \Homul_{kG}(\CE_V, L)$ such
that $\mu\zeta = \alpha\nu$. Thus, $\mu\zeta\gamma = \alpha(\nu\gamma) = 0$.
\end{proof}

Let $\ell = \ell(kG)$ denote the Loewy length of $kG$, {\it i. e.} the least 
integer $n$ such that $\Rad^n(kG) = \{0\}$. Recall that for any 
$kG$-module $M$, $\Rad^{\ell}(M) = \Rad^{\ell}(kG) M = \{0\}$.

\begin{prop}  \label{prop:tensor-ann}
Let $V$ be a closed subvariety of $V_G(k)$ and $M$ a $kG$-module. 
Suppose that $\zeta \in \FA_V(M)$.
Then, for any finite dimensional $kG$-module $N$, 
$\zeta^{\ell(kG)} \in \FA_V(M \otimes N)$.
\end{prop}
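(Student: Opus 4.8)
The plan is to induct on the Loewy length of $N$, using the radical filtration of $N$ to reduce to the case of semisimple modules, and then to apply Lemma~\ref{lem:ann-tri} repeatedly to climb the filtration. First I would set $\ell = \ell(kG)$ and consider the filtration
\[
0 = \Rad^\ell(N) \subseteq \Rad^{\ell-1}(N) \subseteq \dots \subseteq \Rad(N) \subseteq N,
\]
whose successive quotients $\Rad^{i}(N)/\Rad^{i+1}(N)$ are semisimple $kG$-modules, hence direct sums of simple modules. The hypothesis says $\zeta$ annihilates $\Homul_{kG}(\CE_V, M \otimes S)$ for every simple $S$; since tensoring with $M$ and the functor $\Homul_{kG}(\CE_V, -)$ both commute with finite direct sums, $\zeta$ annihilates $\Homul_{kG}(\CE_V, M \otimes (\Rad^i(N)/\Rad^{i+1}(N)))$ for each $i$. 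Thus $\zeta \in \FA_V(M \otimes (\Rad^i(N)/\Rad^{i+1}(N)))$ for $i = 0, 1, \dots, \ell-1$ — that is, for all $\ell$ of the semisimple layers.

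Next I would tensor the short exact sequences $0 \to \Rad^{i+1}(N) \to \Rad^i(N) \to \Rad^i(N)/\Rad^{i+1}(N) \to 0$ with $M$ over $k$. Since these are $k$-split, tensoring with $M$ keeps them exact, and they give rise to triangles in $\Stmodg$
\[
\xymatrix{
{} \ar[r] & M \otimes \Rad^{i+1}(N) \ar[r] & M \otimes \Rad^{i}(N) \ar[r] & M \otimes (\Rad^i(N)/\Rad^{i+1}(N)) \ar[r] & {}.
}
\]
Now I induct downward: suppose $\zeta^{\,\ell - i - 1} \in \FA_V(M \otimes \Rad^{i+1}(N))$ (the base case $i = \ell-1$ is immediate since $\Rad^\ell(N) = 0$, so $\zeta^{0} = 1$ trivially annihilates the zero module, and $\zeta^{1} \in \FA_V(M \otimes \Rad^{\ell-1}(N))$ because that layer is semisimple). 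Applying Lemma~\ref{lem:ann-tri} to the triangle above, with the first term $L = M \otimes \Rad^{i+1}(N)$ contributing $\zeta^{\,\ell - i - 1}$ and the third term $N' = M \otimes (\Rad^i(N)/\Rad^{i+1}(N))$ contributing $\zeta$, we conclude $\zeta \cdot \zeta^{\,\ell - i - 1} = \zeta^{\,\ell - i} \in \FA_V(M \otimes \Rad^i(N))$. Running this from $i = \ell-1$ down to $i = 0$ yields $\zeta^{\ell} \in \FA_V(M \otimes N)$, which is the claim.

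The only subtle points — and the main thing to get right — are, first, that the exactness of the radical filtration is preserved under $-\otimes_k M$ (this is automatic since we are tensoring over a field, so every short exact sequence is split as $k$-modules) and that such exact sequences genuinely induce triangles in $\Stmodg$ (which is the standard correspondence between $k$-split exact sequences and triangles recalled in Section~2); and second, that the exponent count is exactly $\ell$ and not larger. Since there are precisely $\ell$ semisimple layers and each application of Lemma~\ref{lem:ann-tri} multiplies the annihilating element by one factor of $\zeta$, the exponent accumulated is exactly $\ell(kG)$, matching the statement. There is no real obstacle here beyond bookkeeping; the content is entirely carried by Lemma~\ref{lem:ann-tri}, and the role of the Loewy length is simply to bound the length of the filtration along which the lemma is iterated.
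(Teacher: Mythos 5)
Your proposal is correct and follows essentially the same route as the paper: filter $N$ by its radical series, note that the hypothesis together with additivity of $\Homul_{kG}(\CE_V,-)$ gives $\zeta \in \FA_V$ of each semisimple layer tensored with $M$, and then iterate Lemma~\ref{lem:ann-tri} along the induced triangles to accumulate the exponent $\ell(kG)$. The only difference is cosmetic: the paper spells out explicitly that a map killed on each direct summand of a layer factors through a projective, which you subsume into the remark that the functors involved commute with finite direct sums.
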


\begin{proof}
First notice that it is sufficient to prove the proposition in the case
that $G$ is a $p$-group. This is because whenever a $kG$-map has the property 
that its restriction to the Sylow $p$-subgroup $P$ of $G$ factors through a 
$kP$-projective module, then the map factors through a $kG$-projective module.

Let $R_j = \Rad^j(N)$, so that we have a radical filtration 
\[
\{0\} = R_n \subset R_{n-1} \subset \dots \subset R_1 \subset R_0 = N,  
\qquad \qquad n \leq \ell,
\]
where every one of the quotients $R_i/R_{i+1}$ is a direct sum of copies 
of the trivial module $k$. Let $M_i = M \otimes R_i$. 
Thus, for $0 \leq i < n$ we have a triangle
\[
\xymatrix{
{}\ar[r] & \Homul_{kG}(\CE_V, M_{i+1}) \ar[r] & 
\Homul_{kG}(\CE_V, M_i) \ar[r] & \Homul_{kG}(\CE_V, M_i/M_{i+1}) \ar[r] & {}.
}
\] 
Fix the index $i$. The quotient  
\[
M_i/M_{i+1} \cong M \otimes (R_i/R_{i+1}) 
\cong \oplus_{j \in J} M \otimes k \cong \oplus_{j \in J} M.
\]
for some finite indexing set $J$.

Let $\hat{\zeta}: \CE_V \to \CE_V$ be a representative of the class of 
$\zeta$. Choose any homomorphism $\mu: \CE_V \to M \otimes (R_i/R_{i+1})$.
The composition $\mu\hat{\zeta}$ factors through
a projective module since $R_i/R_{i+1}$ has finite dimension.  
Thus, we have shown that $\zeta$ annihilates 
$\Homul_{kG}(\CE_V, (M \otimes R_i)/(M \otimes R_{i+1}))$.

Using this fact we prove the proposition by applying  Lemma \ref{lem:ann-tri}
repeatedly, beginning with $i = n-1$.
\end{proof}

There is something of a realization theorem. First we need the 
following. 

\begin{lemma} \label{lem:commut}
Suppose that $V$ is a closed subvariety of $V_G(k)$. 
Suppose that $\zeta \in \Endul_{kG}(\CE_V)$ and that 
$\gamma \in \Homul_{kG}(\CE_V, \Omega(\CE_V))$. Then 
$\gamma\zeta = \Omega(\zeta)\gamma$ in $\Stmodg$.
\end{lemma}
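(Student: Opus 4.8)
The plan is to exploit the fact that $\Endul_{kG}(\CE_V)$ is commutative (as noted in Section \ref{sec:endomorph}, following Krause), together with naturality of the connecting map in the triangle $\CS_V$. First I would recall that $\CE_V$ acts as the unit object on the colocalized (localizing) subcategory $\CL_V$ of all $kG$-modules whose support is contained in $V$; since both $\Omega(\CE_V)$ and $\CE_V$ lie in $\CL_V$, every homomorphism between objects of $\CL_V$ is a morphism of $\Endul_{kG}(\CE_V)$-modules, and $\Endul_{kG}(\CE_V)$ is commutative. The identity $\gamma\zeta = \Omega(\zeta)\gamma$ is then exactly the statement that $\gamma\colon \CE_V \to \Omega(\CE_V)$ is a module homomorphism over the (commutative) endomorphism ring, once one identifies the action of $\zeta\in\Endul_{kG}(\CE_V)$ on $\Omega(\CE_V) \cong \CE_V\otimes\Omega(\CE_V)$ with $\Omega(\zeta)$.

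The key steps, in order, would be: (i) Apply $\Omega$ to the triangle $\CS_V$, or equivalently rotate it, so that $\gamma$ appears as the connecting morphism $\CE_V \to \Omega(\CE_V)$ fitting into the triangle $\CF_V \to \Omega(\CE_V) \xrightarrow{\ } \CE_V \xrightarrow{\sigma_V} \Omega(\CF_V)^{-1}$ — more precisely, use that $\Homul_{kG}(\CE_V,\Omega(\CE_V))\cong \Homul_{kG}(\CE_V,\CE_V)$ via the isomorphism induced by $\Omega$ (since $\Omega$ is an autoequivalence of $\Stmodg$ that preserves $\CL_V$). Under this isomorphism $\gamma$ corresponds to some $\delta\in\Endul_{kG}(\CE_V)$ with $\Omega(\delta)=\gamma$. (ii) Compute $\gamma\zeta$: this is $\Omega(\delta)\circ\zeta$. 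Since $\zeta\colon\CE_V\to\CE_V$ and $\Omega$ is a functor, $\Omega(\delta\,\Omega^{-1}(\zeta)\,\text{?})$ — one must be careful: $\zeta$ is not in the image of $\Omega$ a priori, so instead I would use the naturality square for the shift functor applied to the triangle, i.e. that the connecting map $\gamma$ is natural, giving $\gamma\circ\zeta = \Omega(\zeta)\circ\gamma$ directly. (iii) Make this precise: the distinguished triangle $\CS_V$ is natural in the sense that tensoring the endomorphism $\zeta$ of $\CE_V$ (extended to an endomorphism of the whole triangle $\CS_V$, which exists and is unique up to the relevant indeterminacy by the universal property cited in the excerpt and in Theorem 2.6 of \cite{BF}) yields a morphism of triangles; the commuting square between the third and fourth terms of this morphism of triangles is exactly $\gamma\zeta = \Omega(\zeta)\gamma$.

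I expect the main obstacle to be step (ii)/(iii): justifying that the endomorphism $\zeta$ of $\CE_V$ lifts to an endomorphism of the entire triangle $\CS_V$ inducing $\Omega(\zeta)$ on the fourth term $\Omega(\CE_V)$, rather than some other self-map. The cleanest route is to observe that $\CS_V$ is determined up to unique isomorphism by universal properties (localization/colocalization), so the self-map $\zeta\colon\CE_V\to\CE_V$ induces, by functoriality of the colocalization functor $\CE_V\otimes(-)$, a canonical endomorphism of the full triangle $\zeta\otimes\CS_V$; identifying the induced self-map of $\Omega(\CE_V)$ with $\Omega(\zeta)$ then follows because $\Omega$ commutes with $\CE_V\otimes(-)$ and $\zeta\otimes\Id_{\CE_V}$ corresponds to $\zeta$ under $\CE_V\otimes\CE_V\cong\CE_V$. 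The commutativity of $\Endul_{kG}(\CE_V)$ noted in Section \ref{sec:endomorph} is what guarantees there is no sign or ordering ambiguity. Once the morphism-of-triangles picture is set up, the desired identity is simply the commutativity of one of its squares, so there is no computation to grind through.
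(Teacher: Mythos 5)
Your underlying idea---realize $\zeta$ as a central/natural action (via the colocalization functor $\CE_V\otimes(-)$) and deduce the commutation from naturality---is a legitimate strategy and genuinely different from the paper, which argues concretely: it uses the explicit model of $\CE_V$ and of $\Omega(\CE_V)$ from Theorem \ref{thm:strEV} to identify $\Homul_{kG}(\CE_V,\Omega(\CE_V))$ with a product of (Tate) cohomology groups of $H$ in which composition is computed by chain maps, and then reads off the commutation from the commutativity of $\Endul_{kG}(\CE_V)$ at the chain level. However, as written your plan has a genuine gap. Step (i) does not typecheck ($\Omega(\delta)$ is a self-map of $\Omega(\CE_V)$, not a map $\CE_V\to\Omega(\CE_V)$, and $\Homul_{kG}(\CE_V,\Omega(\CE_V))$ is not isomorphic to $\Endul_{kG}(\CE_V)$); you abandon it, which is fine. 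The real problem is step (iii): in the lemma $\gamma$ is an \emph{arbitrary} element of $\Homul_{kG}(\CE_V,\Omega(\CE_V))$, not the connecting map of $\CS_V$, so no commuting square of a morphism of triangles built on $\CS_V$ (or on $\CE_V\otimes\CS_V$) contains the composites $\gamma\zeta$ or $\Omega(\zeta)\gamma$; that mechanism cannot literally produce the identity. The square you actually need is the interchange (bifunctoriality) square for $\gamma$ itself, namely $(\zeta\otimes\Id_{\Omega(\CE_V)})\circ(\Id_{\CE_V}\otimes\gamma)=(\Id_{\CE_V}\otimes\gamma)\circ(\zeta\otimes\Id_{\CE_V})$, transported along the isomorphisms $\sigma_V\otimes\Id_{\CE_V}\colon\CE_V\otimes\CE_V\to\CE_V$ and $\sigma_V\otimes\Id_{\Omega(\CE_V)}\colon\CE_V\otimes\Omega(\CE_V)\to\Omega(\CE_V)$.

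Even after that correction, the two identifications you merely assert are the actual mathematical content. That $\zeta\otimes\Id_{\Omega(\CE_V)}$ corresponds to $\Omega(\zeta\otimes\Id_{\CE_V})$ under $\CE_V\otimes\Omega(\CE_V)\cong\Omega(\CE_V\otimes\CE_V)$ is honest naturality and is fine. But the claim that ``$\zeta\otimes\Id_{\CE_V}$ corresponds to $\zeta$ under $\CE_V\otimes\CE_V\cong\CE_V$'' is not automatic when the identification is $\sigma_V\otimes\Id_{\CE_V}$: it amounts to $\zeta\otimes\Id_{\CE_V}=\Id_{\CE_V}\otimes\zeta$ in $\Endul_{kG}(\CE_V\otimes\CE_V)$, which needs an argument---for instance, the two maps are conjugate by the swap automorphism, and $\Endul_{kG}(\CE_V\otimes\CE_V)\cong\Endul_{kG}(\CE_V)$ is commutative, so conjugation by a unit is trivial. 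So commutativity of $\Endul_{kG}(\CE_V)$ is indeed needed, but at this point rather than for ``sign or ordering'' reasons as you suggest. With these two points supplied, your categorical route goes through and is arguably cleaner and more general (it does not use the structure theorem for $\CE_V$ at all); without them, the proof is incomplete.
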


\begin{proof}
The lemma is a consequence of general principles. That is, the 
$kG$-module $\CE_V$ is the unit object in the subcategory $\CM_V$ and 
its graded endomorphism ring $\End_{\CM}^*(\CE_V) = 
\sum_{n \in \bZ} \Homul_{kG}(\CE_V, \Omega^{-n}(\CE_V))$ is graded 
commutative. 
\end{proof}

\begin{thm} \label{thm:realize}
Suppose that $V$ is a closed subvariety of $V_G(k)$.  
Choose an element $\zeta \in \Endul_{kG}(\CE_V)$ and let $M_\zeta$ be 
the third object in the triangle of $\zeta$:
\[
\xymatrix{
{} \ar[r] & \Omega(\CE_V) \ar[r]^\gamma & 
M_\zeta \ar[r]^\beta & \CE_V \ar[r]^\zeta & \CE_V \ar[r] & 
\Omega^{-1}(M_\zeta) \ar[r] & {}
}
\]
Then the radical of the 
ideal $\FA_V(M_{\zeta})$ is the same as that of the ideal generated by
$\zeta$.
\end{thm}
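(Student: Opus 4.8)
The plan is to apply $\Homul_{kG}(\CE_V,-)$ to the triangle defining $M_\zeta$, read off a short exact sequence of modules over $\CC=\Endul_{kG}(\CE_V)$, and treat the two inclusions of radicals separately. First, since the $\CC$-module $\Homul_{kG}(\CE_V,M)$, and hence the ideal $\FA_V(M)$, does not depend on the coalgebra structure on $kG$ (Section \ref{sec:hopf}), we may assume by Lemma \ref{lem:extendHopf} that $\alpha$ is a map of Hopf algebras, so that Theorem \ref{thm:fingen} is available. Recall from Section \ref{sec:endomorph} that $\CC$ is commutative and, as a ring, is the completion of the finitely generated graded $k$-algebra $\Gamma$ of Proposition \ref{prop:endoE2} at its irrelevant ideal; in particular $\CC$ is Noetherian, with a unique minimal prime. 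Applying $\Homul_{kG}(\CE_V,-)$ to the triangle $M_\zeta\to\CE_V\xrightarrow{\zeta}\CE_V\to\Omega^{-1}(M_\zeta)$ and its rotations, and using as in the proof of Lemma \ref{lem:gen1} that $\Homul_{kG}(\CE_V,\CF_V\otimes X)=0$ for every $X$, one obtains a long exact sequence of $\CC$-modules. The map it contains on $\CC=\Homul_{kG}(\CE_V,\CE_V)$ is multiplication by $\zeta$ (as $\CC$ is commutative), and the map it contains on $\CC':=\Homul_{kG}(\CE_V,\Omega(\CE_V))$ is induced by $\Omega(\zeta)$, hence by Lemma \ref{lem:commut} is again multiplication by $\zeta$. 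Extracting the relevant stretch of the sequence yields a short exact sequence of $\CC$-modules
\[
0\longrightarrow \CC'/\zeta\CC'\longrightarrow \Homul_{kG}(\CE_V,M_\zeta)\longrightarrow \Ann_\CC(\zeta)\longrightarrow 0.
\]

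For the inclusion $\sqrt{(\zeta)}\subseteq\sqrt{\FA_V(M_\zeta)}$ I would chase this sequence. For $x\in\Homul_{kG}(\CE_V,M_\zeta)$, the image of $\zeta x$ in $\Ann_\CC(\zeta)$ equals $\zeta$ times an element of $\Ann_\CC(\zeta)$, hence is zero, so $\zeta x$ lies in the submodule $\CC'/\zeta\CC'$; and $\zeta$ annihilates $\CC'/\zeta\CC'$, so $\zeta^2 x=0$. Therefore $\zeta^2\in\FA_V(M_\zeta)$ and $\sqrt{(\zeta)}=\sqrt{(\zeta^2)}\subseteq\sqrt{\FA_V(M_\zeta)}$.

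For the reverse inclusion, observe that $\CC'/\zeta\CC'$ is a $\CC$-submodule of $\Homul_{kG}(\CE_V,M_\zeta)$, so $\FA_V(M_\zeta)\subseteq\Ann_\CC(\CC'/\zeta\CC')$; it thus suffices to show $\Ann_\CC(\CC'/\zeta\CC')\subseteq\sqrt{(\zeta)}$, equivalently that every prime $\fp$ of $\CC$ with $\zeta\in\fp$ lies in $\Supp_\CC(\CC'/\zeta\CC')$. I use two facts about $\CC'$. First, by the triangular structure $\CC'\cong\Homul_{kG}(\CE_V,\Omega(k))$ (as in the proof of Lemma \ref{lem:commut}, combined with Lemma \ref{lem:gen1}); since $\Omega(k)$ is a finitely generated $kG$-module, $\alpha^*(\Omega(k))$ is finite-dimensional, hence compact, so Theorem \ref{thm:fingen} shows $\CC'$ is a finitely generated $\CC$-module. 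Second, $\CC'$ is faithful over $\CC$ modulo nilpotents, i.e. $\sqrt{\Ann_\CC(\CC')}=\sqrt{0}$; equivalently $\Supp_\CC(\CC')$ is the whole spectrum of $\CC$. Granting these, fix a prime $\fp\ni\zeta$. Then $\CC'_\fp\ne 0$, it is finitely generated over the local ring $\CC_\fp$, and $\zeta$ lies in the maximal ideal of $\CC_\fp$; by Nakayama's lemma $\CC'_\fp\ne\zeta\CC'_\fp$, so $\fp\in\Supp_\CC(\CC'/\zeta\CC')$. Combined with the previous paragraph this gives $\sqrt{\FA_V(M_\zeta)}=\sqrt{(\zeta)}$.

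Everything above is formal except the second fact, that $\CC'=\Homul_{kG}(\CE_V,\Omega(\CE_V))$ is faithful over $\CC$ modulo nilpotents; this is the step I expect to require the most care. I would prove it by computing $\CC'$ explicitly from the $kH$-module structure of $\Omega(\CE_V)$ recorded in the proof of Lemma \ref{lem:commut}, namely the summands $(P_0\otimes U)\oplus P_1\oplus(P_2\otimes U)\oplus\cdots$ together with the indicated action of $Z$, in the same fashion in which $\CC$ itself is determined in Section \ref{sec:endomorph} and in \cite{Cendo1}; this should present $\CC'$ as a finitely generated $\CC$-module of positive generic rank. Alternatively one can argue that $\Homul_{kG}(\CE_V,-)$ is conservative on the localizing subcategory generated by $\CE_V$ and compatible with localization at the primes of $\CC$, so that the shift $\Omega(\CE_V)$ of the generator $\CE_V$ has full support over $\CC$, which is the same assertion.
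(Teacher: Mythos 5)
Your first inclusion is correct and is essentially the paper's own argument in a cleaner packaging: applying $\Homul_{kG}(\CE_V,-)$ to the triangle, identifying the induced maps on $\CC=\Endul_{kG}(\CE_V)$ and on $\CC'=\Homul_{kG}(\CE_V,\Omega(\CE_V))$ with multiplication by $\zeta$ via commutativity and Lemma \ref{lem:commut}, and chasing the resulting sequence gives $\zeta^2\in\FA_V(M_\zeta)$, which is exactly the conclusion of the second half of the paper's proof.

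The reverse inclusion is where you have a genuine gap. Your argument reduces it to the claim that $\CC'$ has nilpotent annihilator over $\CC$ (equivalently, full support), and you do not prove this: you say you would compute $\CC'$ explicitly and that it ``should'' come out as a finitely generated $\CC$-module of positive generic rank, and your fallback argument --- conservativity of $\Homul_{kG}(\CE_V,-)$ plus ``compatibility with localization at the primes of $\CC$'' --- is not a proof, since conservativity only yields $\CC'\ne 0$, and nothing in the paper or in general theory lets you localize this functor at primes of the completed endomorphism ring; that ``full support of the shift of the generator'' is precisely the statement to be established. Since every other ingredient of your argument is routine (Noetherianity and the unique minimal prime of $\CC$, which do hold but need the identification of $\CC$ with the completion of $\Gamma$ from Proposition \ref{prop:endoE2}; finite generation of $\CC'$ via Theorem \ref{thm:fingen} after the Hopf-structure reduction; Nakayama), the unproved support statement is exactly where the content of that direction sits, so the proof is incomplete as written; verifying it would require working with the explicit structure $\CC'\cong\prod_{n\ge-1}\hat{\HHH}{}^n(H,k)$ recorded in the proof of Lemma \ref{lem:commut}. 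Note that the paper avoids all of this machinery: for $\gamma\in\FA_V(M_\zeta)$ it shows directly that $\gamma^{\ell}\in(\zeta)$, with $\ell$ the Loewy length of $kG$, by using $\Omega^{-1}(M_\zeta)\cong\Omega^{-1}(k)\otimes M_\zeta$ and Proposition \ref{prop:tensor-ann} to see that $\gamma^{\ell}$ annihilates $\Homul_{kG}(\CE_V,\Omega^{-1}(M_\zeta))$, and then factoring right multiplication by $\gamma^{\ell}$ on $\Endul_{kG}(\CE_V)$ through $\zeta$ in the same long exact sequence you wrote down; no finite generation, Noetherian hypotheses, or support of $\CC'$ are needed. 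Either adopt that argument, or actually carry out the computation of the $\CC$-module structure of $\CC'$ to substantiate your key fact.
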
 

\begin{proof} We use the diagram below to show that 
if $\mu \in \FA_V(M_{\zeta})$, then
$\mu^\ell$ is a multiple of $\zeta$. 
To start, notice that  
$\Omega^{-1}(M_\zeta) \cong \Omega^{-1}(k) \otimes M_\zeta$
in the stable category and by Proposition \ref{prop:tensor-ann}, 
$\mu^{\ell}$ is in $\FA_V(\Omega^{-1}(M_\zeta)).$
In the diagram the vertical maps are right multiplication by $\mu^\ell$. 
\[
\xymatrix{
{} \ar[r] & \Homul_{kG}(\CE_V, \CE_V) \ar[r]^\zeta \ar[d]^{\mu^\ell_*} &
\Homul_{kG}(\CE_V, \CE_V) \ar[r]^{\beta \quad}  
\ar[d]^{\mu^\ell_*} \ar@{.>}[dl]_\sigma  &
\Homul_{kG}(\CE_V, \Omega^{-1}(M_\zeta)) \ar[r] \ar[d]^{\mu^\ell_*} & {} \\
{} \ar[r] & \Homul_{kG}(\CE_V, \CE_V) \ar[r]^\zeta &
\Homul_{kG}(\CE_V, \CE_V) \ar[r]^{\beta \quad}  &
\Homul_{kG}(\CE_V, \Omega^{-1}(M_\zeta)) \ar[r]  & {}
}
\]
Because the composition $\beta\mu^\ell_* = \mu^\ell_*\beta$ is zero
on the middle term $\Homul_{kG}(\CE_V, \CE_V)$, we have that there is a 
map $\sigma: \Homul_{kG}(\CE_V, \CE_V) \to \Homul_{kG}(\CE_V, \CE_V)$
such that $\zeta\sigma = \mu^\ell_*.$ Thus we have that 
$\sigma(\Id_M)\circ \zeta = \mu^\ell_*(\Id_M) = 
\Id_M \circ \mu^\ell = \mu^\ell$. 

Next we show that $\zeta^2 \in \FA_V(M_\zeta)$.  
First, we see from the exact sequence 
\[
\xymatrix{
{} \ar[r] & \Homul_{kG}(\CE_V, M_\zeta) \ar[r]^{\beta_*} &
\Homul_{kG}(\CE_V, \CE_V) \ar[r]^\zeta  &
\Homul_{kG}(\CE_V, \CE_V) \ar[r] & {} \\
}
\]
that $\zeta\beta = 0$ where here $\zeta$ means left composition with
$\zeta$. Hence, in the diagram 
\[
\xymatrix{
{} \ar[r] & \Homul_{kG}(\CE_V, \Omega(\CE_V)) 
\ar[r]^{\gamma_*} \ar[d]^{\zeta^*} &
\Homul_{kG}(\CE_V, M_\zeta) \ar[r]^{\beta_*}  
\ar[d]^{\zeta^*} \ar@{.>}[dl]_\tau  & {}             
\Homul_{kG}(\CE_V, \CE_V) \ar[r] \ar[d]^{\zeta^*} & {} \\
{} \ar[r] & \Homul_{kG}(\CE_V, \Omega(\CE_V)) \ar[r]^{\gamma_*} &
\Homul_{kG}(\CE_V, M_\zeta) \ar[r]^{\beta_*}  &
\Homul_{kG}(\CE_V, \CE_V) \ar[r]  & {}
}
\]
the composition $\beta\zeta$ is also zero. Note here, the down arrows 
marked $\zeta^*$ denote right multiplication by $\zeta$. However, there is
no problem since $\Homul_{kG}(\CE_V, \CE_V)$ is commutative. 
As a result, the map $\tau$ exist with 
$\gamma\tau = \zeta$. 

Finally, we consider the diagram 
\[
\xymatrix{
{} \ar[r] & \Homul_{kG}(\CE_V, \Omega(\CE_V)) 
         \ar[r]^{\Omega(\zeta)_*} \ar[d]^{\zeta^*} &
\Homul_{kG}(\CE_V, \Omega(\CE_V)) \ar[r]^{\gamma_*}  \ar[d]^{\zeta^*} &
\Homul_{kG}(\CE_V, M_\zeta) \ar[r] \ar[d]^{\zeta^*} & {} \\
{} \ar[r] & \Homul_{kG}(\CE_V, \Omega(\CE_V)) \ar[r]^{\Omega(\zeta)_*} &
\Homul_{kG}(\CE_V,  \Omega(\CE_V)) \ar[r]^{\gamma_*}  &
\Homul_{kG}(\CE_V, M_\zeta) \ar[r]  & {}.
}
\]
For $\mu \in \Homul_{kG}(\CE_V, M_\zeta)$ we have that $\mu\zeta = 
\gamma \circ (\tau(\mu))$.  Here we regard $\tau(\mu)$ as an element of 
the object in the middle of the upper row of the diagram. Recall that 
$\gamma_*$ is left composition with the map 
$\gamma: \Omega(\CE_V) \to M_\zeta$
in the triangle that contains $\zeta$.
Thus we have that 
\[
\mu \zeta^2 = 
(\gamma(\tau(\mu))\zeta = (\gamma \circ \tau(\mu)) \circ \zeta
= \gamma \circ (\Omega(\zeta) \circ \tau(\mu)) =
 \gamma(\Omega(\zeta)(\tau(\mu))) = 0,
\]
by Lemma \ref{lem:commut} and the triangle of $\gamma$. 
\end{proof}

Suppose that $0 \neq \zeta \in \HHH^n(G,k)$. Assume that $n$ is even if $p$ is
odd. Associated to $\zeta$ is a module that generates the thick tensor
ideal in $\stmodg$ of all modules whose support variety is contained in 
the closed set of all prime ideals that contain $\zeta$. The module $L_\zeta$
is defined to be the kernel the map $\hat{\zeta}$
\[
\xymatrix{
0 \ar[r] & L_\zeta \ar[r] & \Omega^n(k) \ar[r]^{ \ \hat{\zeta}} & k \ar[r] & 0
}
\]
where $\hat{\zeta}$ is a cocyle representing $\zeta$.
Suppose that $V$ is a single point as in the last theorem. Tensoring 
the above sequence with $\CE_V$ yields an exact sequence 
\[
\xymatrix{
0 \ar[r] & L_\zeta \otimes \CE_V \ar[r] & 
\Omega^n(k) \otimes \CE_V \ar[r]^{\qquad 1 \otimes \hat{\zeta}} & 
\CE_V \ar[r] & 0.
}
\]

But recall that $\CE_V$ is periodic of period 1 if $p=2$ and period 2 
otherwise. That is, for example, if $p > 2$, then the tensor of $\CE_V$
with the sequence 
\[
\xymatrix{ 
0 \ar[r] & k \ar[r] & k(G/H) \ar[r]^{Z} & k(G/H) \ar[r] & k \ar[r] & 0 
}
\]
where $k(G/H)$ is the permutation module 
on which $kH$-acts trivially and the middle 
map is multiplication by $Z$, yields a sequence with $\CE_V$ at the ends
and middle terms projective. Thus we have that $\Omega^n(k) \otimes \CE_V 
\cong \CE_V$ in the stable category. The consequence of this is that 
$L_\zeta \otimes \CE_V$ is isomorphic to some $M_\gamma$. So what is $\gamma$?
We sketch a proof of an answer to the question. 

For notation, we recall that $\HHH^*(C,k) \cong k[\mu]$ 
if $p=2$ and, otherwise, 
$\HHH^*(C,k) \cong k[\mu, \nu]/(\mu^2)$ where $\mu$ and $\nu$ are in 
degrees one and two. Thus for all $n$, $\HHH^n(C,k)$ has dimension one and 
has a basis element which we demote $\mu_n$. So in the case $p >2$, 
$\mu_{2n+1} = \mu\nu^n$ while $\mu_{2n} = \nu^n$. Recall also that 
$\HHH^*(G,k) = \HHH^*(H,k) \otimes \HHH^*(C,k)$. 

\begin{prop} \label{prop:zeta-gamma}
Assume the hypothesis of the Theorem \ref{thm:realize}. Let $\zeta$ be 
an element in $\HHH^n(G,k)$. Write $\zeta = \sum_{i=0}^n \mu_i \gamma_{n-i}$
for $\gamma_{n-i} \in \HHH^{n-i}(H,k)$. Then in the stable category, 
$L_\zeta \otimes \CE_V \cong M_\gamma$ where 
\[
\gamma = (\gamma_0, \dots, \gamma_n, 0,0,0,0,\dots ) \in 
\prod_{i\geq 0} \HHH^i(H,k) \cong \Endul_{kG}(\CE_V).
\]
\end{prop}

\begin{proof}
The secret to the proof is finding a splitting of the middle vertical map
in the diagram:
\[
\xymatrix{
0 \ar[r] & L_\zeta \otimes \CE_V \ar[r] \ar[d]&
\Omega^n(k) \otimes \CE_V \ar[r]^{\qquad 1 \otimes \hat{\zeta}} \ar[d] & 
\CE_V \ar[r] \ar@{=}[d]  & 0 \\
0 \ar[r] & M_\zeta \ar[r] &
\CE_V \ar[r]  & \CE_V \ar[r] & 0.
}
\]
The map itself can be taken to be $(\mu_n \otimes 1) \otimes 1$, since
the kernel $L_{\mu_n}$ of the cocycle $\mu_n \otimes 1: \Omega^n(k) \to k$ is 
free on restriction to $kC$, and $L_{\mu_n} \otimes \CE_V$ is a projective
module. 

We construct the splitting $\theta$ as follows. Let $k\hat{G} = 
kC \otimes kH \otimes kH$ and conctruct the idempotent $\CE$-module $\hat{\CE}$
as in Theorem \ref{thm:strEV}. If $p = 2$, then on restriction 
to $kH \otimes kH$, $\hat{\CE}$
is a sum of the terms of the projective resolution $((P \otimes P)_*, 
\varepsilon \otimes \varepsilon)$ where $(P_*, \varepsilon)$ is the 
minimal projective $kH$-resolution of $k$. 
For $p$ odd, the restriction is a sum of terms of the form 
$(P \otimes P)_n \otimes X$ where $X$ is either $U$ or $k$ depending 
on the parity of $n$. In either case we have a chain map 
$\hat{\psi}: P_* \to (P \otimes P)_*$ that defines cup product on 
cohomology and splits both $1 \otimes \varepsilon$ and $\varepsilon \otimes 1$.
This gives us a map $\psi: \CE_V \to \hat{\CE}$. At the same
time, $\hat{\CE}$ when viewed as a $kG$-module by restriction (taking the 
diagonal embedding of $kH$ into $kH \otimes kH$) is very naturally isomorphic
to $\CE_V \otimes \CE_V$.

Next we notice that the module $V$ whose restriction to $kH$ is the direct sum
\[
V_{\downarrow kH} = P_0 \oplus (P_1 \otimes U) \oplus P_2 \oplus
(P_3 \otimes U) \oplus \dots \oplus (P_{n-1} \otimes U) \oplus 
P_n/\partial(P_{n+1})
\]
where $U =k$, if $p=2$. Here, $Z$ acts as in the formula in Theorem
\ref{thm:strEV}, where defined, and by the induce map $Z: P_n \to P_{n+1}$
on the last summand. But now, we see from the formula for minimal 
projective $kG$-resolution of $k$ as the tensor product of $kC$ and $kH$ 
resolutions, that $V \cong \Omega^n(k)$. Letting $\sigma: \CE_V \to V$
be the obvious quotient, we get the splitting 
\[
\xymatrix{ 
\CE_V \ar[r]^{\psi \qquad} & \hat{\CE} \cong \CE_V \otimes \CE_V 
\ar[r]^{ \ \sigma \otimes 1} & \Omega^n(k) \otimes \CE_V .
}
\]
Now the theorem follows by composition of maps, using the splittings. 
\end{proof}

In the case that $\gamma_0 \ne 0$, then the $\gamma$ is a 
unit in $\Endul_{kG}(\CE_V)$, implying that $L_\zeta \otimes \CE_V$ is
the zero module. The situation is consistent with what we know
in that $\gamma_0 \ne 0$ implies that $\res_{G, C}(\zeta) \neq 0$, and 
hence $L_{\zeta}$ is projective on restriction to $kC$ and 
$L_{\zeta} \otimes \CE_V$ is a projective module. 

We end by pointing out another curosity. 
Again, suppose that $G$, $H$, $C$, $V$ are as in Proposition \ref{prop:inflate}.
Let $\zeta$ be an element $\Endul_{kG}(\CE_V)$ with $\zeta$ not invertible, 
Let $\CN_\zeta$ be the subcategory of all $kG$-modules $M$ 
whose variety is in $V$ and such that such that $\zeta$ annihilates
$\Homul_{kG}(\CE_V, M)$. Then $\CN_\zeta$ is a thick subcategory of 
$\Stmodg$. It is not however a localizing subcategory. We know this because
the localizing tensor ideal  have been classified \cite{BIK, BIKP}.
But also, we can see by Theorem \ref{thm:realize}, that the direct sum 
$M = \sum_{n=1}^\infty M_{\zeta^n}$ is not in $\CN_\zeta$. 
Moreover, under some mild assumptions, the module $\CE_V$ can be recovered
as the third object in a triangle involving an endomorphism of 
the module $M$, above. This is a sort of homotopy colimit construction
and will be explored in subsequent work. 


\end{document}